\DeclareMathOperator{\dist}{dist}                           
\DeclareMathOperator{\lspan}{span}                          
\DeclareMathOperator{\conv}{conv}                           
\DeclareMathOperator{\supp}{supp}                           
\DeclareMathOperator{\ext}{ext}                             
\DeclareMathOperator{\Lip}{Lip}                             
\DeclareMathOperator{\lip}{lip}                             
\newcommand{\NN}{\mathbb{N}}                                
\newcommand{\RR}{\mathbb{R}}                                
\newcommand{\abs}[1]{\left|{#1}\right|}                     
\newcommand{\pare}[1]{\left({#1}\right)}                    
\newcommand{\set}[1]{\left\{{#1}\right\}}                   
\newcommand{\norm}[1]{\left\|{#1}\right\|}                  
\newcommand{\dual}[1]{{#1}^\ast}                            
\newcommand{\duality}[1]{\left<{#1}\right>}                 
\newcommand{\cl}[1]{\overline{#1}}                          
\newcommand{\restrict}{\mathord{\upharpoonright}}           
\newcommand{\lipfree}[1]{\mathcal{F}({#1})}                 
\newcommand{\lipnorm}[1]{\norm{#1}_L}                       
\newcommand{\meas}[1]{\mathcal{M}({#1})}                    
\newcommand{\wt}[1]{\widetilde{#1}}                         
\newcommand{\bwt}[1]{\beta\wt{#1}}                          
\newcommand{\pp}{\mathfrak{p}}                              
\newcommand{\opr}[1]{\mathcal{M}_{\mathrm{op}}(#1)}         
\newcommand{\ucomp}[1]{#1^\mathcal{U}}                    
\renewcommand{\leq}{\leqslant}
\renewcommand{\geq}{\geqslant}
\newcommand{\ep}{\varepsilon}
\DeclareMathOperator{\Mol}{Mol}                              
\newcommand{\labeltext}[2]{%
  \@bsphack
  \csname phantomsection\endcsname 
  \def\@currentlabel{#1}{\label{#2}}%
  \@esphack
}
\theoremstyle{plain}
\newtheorem{theorem}{Theorem}[section]
\newtheorem{lemma}[theorem]{Lemma}
\newtheorem{corollary}[theorem]{Corollary}
\newtheorem{proposition}[theorem]{Proposition}
\newtheorem{question}{Question}
\theoremstyle{definition}
\newtheorem*{definition*}{Definition}
\newtheorem{definition}[theorem]{Definition}
\newtheorem{example}[theorem]{Example}
\theoremstyle{remark}
\newtheorem{remark}[theorem]{Remark}
\begin{document}

\title[Extreme points and Choquet theory of free spaces]{A solution to the extreme point problem and other applications of Choquet theory to Lipschitz-free spaces}


\author[R. J. Aliaga]{Ram\'on J. Aliaga}
\address[R. J. Aliaga]{Instituto Universitario de Matem\'atica Pura y Aplicada,
Universitat Polit\`ecnica de Val\`encia,
Camino de Vera S/N,
46022 Valencia, Spain}
\email{raalva@upv.es}

\author[E. Perneck\'a]{Eva Perneck\'a}
\address[E. Perneck\'a]{Faculty of Information Technology, Czech Technical University in Prague, Th\'akurova 9, 160 00, Prague 6, Czech Republic}
\email{perneeva@fit.cvut.cz}

\author[R. J. Smith]{Richard J. Smith}
\address[R. J. Smith]{School of Mathematics and Statistics, University College Dublin, Belfield, Dublin 4, Ireland}
\email{richard.smith@maths.ucd.ie}

\date{}


\begin{abstract}
We prove that every element of a Lipschitz-free space admits an expression as a convex series of elements with compact support. As a consequence, we conclude that all extreme points of the unit ball of Lipschitz-free spaces are elementary molecules, solving a long-standing problem. We also deduce that all elements of a Lipschitz-free space with the Radon-Nikod\'ym property can be expressed as convex integrals of molecules. Our results are based on a recent theory of integral representation for functionals on Lipschitz spaces which draws on classical Choquet theory, due to the third named author.
\end{abstract}

\subjclass[2020]{Primary 46B20; Secondary 46B04, 46E15}

\keywords{Lipschitz-free space, extreme point, De Leeuw representation, convex decomposition}

\maketitle

\section{Introduction and preliminaries}
\label{sec:introduction}

This paper is concerned with the isometric theory of Lipschitz-free spaces. Precise definitions will follow in Section \ref{sec:lipschitz-free}. For the time being, we consider a complete metric space $M$ and the Banach space $\Lip_0(M)$ of real-valued Lipschitz functions on $M$, endowed with the best Lipschitz constant as a norm. This space admits a predual $\lipfree{M}$, the \emph{Lipschitz-free space} over $M$, that can be understood as a canonical linearisation of the metric space $M$. Lipschitz-free spaces have a close connection to the theory of optimal transport (see e.g. \cite[Section 1.2]{APS1} and \cite{OO1}), but our focus is on Banach space theory, where their main applications are often stated to be in nonlinear Banach space geometry. Many of these applications, which are isomorphic in nature, are already to be found in the seminal papers by Godefroy and Kalton \cite{GK,Kalton04}; see \cite{Godefroy_survey} for a survey on that topic. More recently, Lipschitz-free spaces have also found use as a source of isometric examples and counterexamples in linear Banach space theory; see for instance \cite[Section 2]{AALMPPV1} or \cite{Basset}.

The space $\lipfree{M}$ and its bidual $\Lip_0(M)^*$ consist of functionals on a function space, so one could hope that these functionals can be represented in some capacity using measures. There is no known general representation theorem for $\lipfree{M}$, only in particular cases such as $M=\RR^n$ \cite{CKK2}. The best explicit representation to date, valid for any $M$, can be traced back to to K. de Leeuw \cite{deLeeuw}. It consists of expressing elements of $\dual{\Lip_0(M)}$ as measures on the compactification $\bwt{M}$, where $\wt{M}$ is the space of pairs of different points in $M$. The function to be integrated against such a measure is not the Lipschitz function $f$ itself, but rather the mapping $\Phi f:(x,y)\mapsto (f(x)-f(y))/d(x,y)$. Thus we interpret functionals on $\Lip_0(M)$ as ``integrals of incremental quotients''. Every element of $\dual{\Lip_0(M)}$ admits such representing measures, called \emph{(De Leeuw) representations}, but they are not unique. Furthermore, the relation between properties of the functional and those of its representations is not straightforward, and it is challenging to distinguish which measures correspond to functionals in $\lipfree{M}$. See Section \ref{sec:de leeuw} for basic facts on De Leeuw representations.

Because representations are not unique, it is desirable to try to find the ``best'' one(s) for a given functional, whatever that may mean. In the previous works \cite{Aliaga,APS1,APS2}, it was shown that De Leeuw representations can be chosen to satisfy some optimality properties, such as having minimal norm or minimal support (more accurately, minimal \emph{shadow} -- see Section \ref{sec:de leeuw}), and that information about the structure of $\lipfree{M}$ and $\Lip_0(M)^*$ can be gleaned from the study of such representations. The very recent paper \cite{Smith24} by the third named author takes a great leap forward by defining a quasi-order on the space of representations that is designed to compare their complexity in more subtle ways. The quasi-order is induced by a certain function cone, so the resulting theory has many similarities to classical Choquet theory (see e.g. \cite[Chapter 3]{lmns10}). A brief introduction to this Choquet-like theory is provided in Section \ref{sec:choquet}. Representations that are minimal with respect to this quasi-order retain many of the previous optimality properties while gaining additional ones, such as being concentrated on ``nice'' sets or having mutually singular marginals. For the purposes of this paper, the standout property of minimal representations of elements of $\lipfree{M}$ is that they can be described exclusively using coordinates in $M$, whereas arbitrary representations might need to use coordinates in a compactification of $M$; see Theorem \ref{th:opt_conc} for a precise statement.

In this paper, we derive some consequences of this Choquet theory for the structure of Lipschitz-free spaces. In measure spaces, it is easy to decompose an element into an $\ell_1$ sum of other elements, simply by considering restrictions. For our main result, we are able to transfer that principle to Lipschitz-free spaces and decompose every element of $\lipfree{M}$ as a convex sum of elements with compact support (Theorem \ref{th:compact_decomposition}). This makes it possible to reduce some isometric questions on Lipschitz-free spaces to the case where $M$ is compact. Perhaps the most significant one is the characterisation of the extreme points of the unit ball of $\lipfree{M}$. This question has been open since Weaver's work thirty years ago \cite{Weaver95} and has been the object of vigorous research efforts in the last decade \cite{Aliaga,AliagaGuirao,AP_rmi,APPP_2020,APS1,APP,GPPR,GPAZ,Weaver2}; see Section \ref{sec:extreme} for a more detailed account. Theorem \ref{th:extreme} provides a final answer to that problem, relying on the known compact case \cite{Aliaga}. We use this to obtain a Banach-Stone type theorem for certain Lipschitz-free spaces (Theorem \ref{th:banach-stone}). Another question concerns the elements of $\lipfree{M}$ that are \emph{convex integrals of molecules}, meaning that they have a minimal-norm representation concentrated on $\wt{M}$ \cite{APS1}. We show in Theorem \ref{th:p1u_cim} that every element of $\lipfree{M}$ has that form whenever $\lipfree{M}$ has the Radon-Nikod\'ym property, again by reducing to the known compact case \cite{APS2}. This, in turn, provides information on Lipschitz functions that attain their Lipschitz constants (Corollary \ref{cr:p1u sna}). We end the paper with another convex decomposition result: every element of $\lipfree{M}$ can be split into a convex integral of molecules and another element whose representations have the complete opposite behaviour (Theorem \ref{th:diagonal decomposition}).

\subsection{Preliminaries on Lipschitz-free spaces}
\label{sec:lipschitz-free}

In what follows, $M$ will denote a complete metric space. We will tacitly assume that an arbitrary point $0\in M$ has been selected as a base point. The Lipschitz space $\Lip_0(M)$ is then the space of all Lipschitz functions $f:M\to\RR$ such that $f(0)=0$. It becomes a Banach space when endowed with the norm given by the Lipschitz constant
$$
\lipnorm{f} = \sup\set{\frac{f(x)-f(y)}{d(x,y)} \,:\, x\neq y\in M} .
$$
For each $x\in M$, the evaluation functional $\delta(x):f\mapsto f(x)$ belongs to the dual $\Lip_0(M)^*$; in fact, $\delta:M\to\Lip_0(M)^*$ is an isometric embedding. The closed subspace $\lipfree{M} = \cl{\lspan}\,\delta(M)$ is usually called the \emph{Lipschitz-free space} over $M$. The main reference for this class of spaces is \cite{Weaver2} (where they are called \emph{Arens-Eells spaces}). The assumption that $M$ is complete incurs no loss of generality as the Lipschitz and Lipschitz-free spaces over any metric space agree with those over its completion, but the precise statements of some of the results below really depend on that assumption.

The space $\lipfree{M}$ is a linearisation of $M$ in the following sense: it is a Banach space generated by an isometric copy $\delta(M)$ of $M$ that is linearly dense, with the property that every Lipschitz function $f:M\to X$ into a Banach space $X$ such that $f(0)=0$ can be extended to a bounded linear operator $F:\lipfree{M}\to X$ (identifying $M$ with $\delta(M)$), with $\norm{F}$ agreeing with the Lipschitz constant of $f$. Similarly, every base point-preserving Lipschitz map $f:M\to M'$ between metric spaces can be linearised into an operator $\widehat{f}:\lipfree{M}\to\lipfree{M'}$. In particular, taking $X=\RR$ shows that $\lipfree{M}$ is canonically an isometric predual of $\Lip_0(M)$. In the sequel, any mention of the weak$^*$ topology of $\Lip_0(M)$ will refer to the one induced by $\lipfree{M}$. This topology agrees on $B_{\Lip_0(M)}$ with the topology of pointwise convergence; if $M$ is compact, this agrees with the topology of uniform convergence as well.

According to McShane's extension theorem \cite[Theorem 1.33]{Weaver2}, any Lipschitz function $f:A\to\RR$ defined on a closed subset $A\subset M$ can be extended to a function $F:M\to\RR$ that has the same Lipschitz constant. It follows that $\lipfree{A\cup\set{0}}$ can be isometrically identified with a subspace of $\lipfree{M}$, namely $\cl{\lspan}\,\delta(A)$. For a given element $m\in\lipfree{M}$, there is a smallest closed set $S\subset M$ such that $m\in\lipfree{S\cup\set{0}}$; this set is called the \emph{support} of $m$ and denoted $\supp(m)$. Its existence is not at all obvious and was established in \cite{AP_rmi,APPP_2020}. Supports are always separable, and they cannot contain the base point $0$ as an isolated point (see \cite[p. 6]{APPP_2020}).

\subsection{De Leeuw representations}
\label{sec:de leeuw}

We define the set
$$
\wt{M} = \set{(x,y)\in M\times M \,:\, x\neq y}
$$
endowed with the subspace topology from $M\times M$, and for any function $f:M\to\RR$ and $(x,y)\in\wt{M}$ we denote
$$
\Phi f(x,y) = \frac{f(x)-f(y)}{d(x,y)} .
$$
For $f\in\Lip_0(M)$, $\Phi f$ is a bounded continuous real-valued function on $\wt{M}$, and can therefore be extended uniquely to a continuous function on $\bwt{M}$, the Stone-\v{C}ech compactification of $\wt{M}$; this extension will still be denoted $\Phi f$. This defines an operator $\Phi:\Lip_0(M)\to C(\bwt{M})$ that is a linear isometric embedding. Its adjoint $\Phi^*:C(\bwt{M})^*\to\Lip_0(M)^*$ is therefore a non-expansive, surjective quotient operator. The term \emph{De Leeuw transform} is used to refer to both maps $\Phi$ and $\Phi^*$.

We denote by $\meas{X}$ the space of signed Radon measures on the Hausdorff space $X$, and we say that a measure $\mu\in\meas{X}$ is concentrated on a set $A\subset X$ if $\mu(E)=\mu(A\cap E)$ for all $E\subset X$. In any such expression, we tacitly assume that $A$ and $E$ are Borel sets. Recall that $\meas{X}=C(X)^*$ when $X$ is compact, thus $\Phi^*$ maps $\meas{\bwt{M}}$ onto $\Lip_0(M)^*$. When $M$ is complete, both $M$ and $\wt{M}$ are completely metrisable and hence Borel (in fact, $G_\delta$) subsets of $\beta M$ and $\bwt{M}$, respectively (see \cite[Theorem 24.13]{Willard}).

Since $\Phi^*$ is onto, for any $m\in\Lip_0(M)^*$ (in particular, for $m\in\lipfree{M}$) there exist measures $\mu\in\meas{\bwt{M}}$ such that $\Phi^*\mu=m$, that is, such that
$$
\duality{f,m} = \int_{\bwt{M}}\Phi f\,d\mu \quad\text{for any $f\in\Lip_0(M)$.}
$$
Any such $\mu$ will be called a \emph{De Leeuw representation}, or simply \emph{representation}, of $m$. We always have $\norm{\mu}\geq\norm{m}$ but, because $\Phi^*$ is an adjoint quotient map, it is always possible to find representations of $m$ with minimal norm $\norm{m}$. It is not hard to see that $\mu$ can even be chosen to be positive \cite[Proposition 3]{Aliaga}. If a representation is positive and has minimal norm, we will say that it is \emph{optimal}. We denote the set of all optimal representations of $m\in\Lip_0(M)^*$ by
$$
\opr{m} = \set{\mu\in\meas{\bwt{M}} \,:\, \text{$\Phi^*\mu=m$, $\norm{\mu}=\norm{m}$ and $\mu\geq 0$}} ,
$$
and the set of all optimal representations (of any functional) by
$$
\opr{\bwt{M}} = \set{\mu\in\meas{\bwt{M}} \,:\, \text{$\norm{\Phi^*\mu}=\norm{\mu}$ and $\mu\geq 0$}}.
$$

Clearly $\opr{\bwt{M}}$ is norm-closed and $\opr{m}$ is weak$^*$ compact.
Another straightforward but very useful fact is that the set $\opr{\bwt{M}}$ is closed downward: if $\mu\in\meas{\bwt{M}}$ is optimal then any $\mu'\in\meas{\bwt{M}}$ with $0\leq\mu'\leq\mu$ is also optimal \cite[Proposition 2.1]{APS1}. In particular, restrictions of optimal representations to Borel subsets of $\bwt{M}$ are again optimal and provide a simple convex decomposition of functionals as
$$\norm{\Phi^*\mu}=\norm{\Phi^*(\mu\restrict_A)}+\|\Phi^*(\mu\restrict_{\bwt{M}\setminus A})\|$$
whenever $\mu\in\opr{\bwt{M}}$ and $A\subset \bwt{M}$ is Borel. (Recall that the restriction $\mu\restrict_A$ is defined by $\mu\restrict_A(E)=\mu(A\cap E)$ for $E\subset\bwt{M}$). Note also that if $0\in A\subset M$ and $A$ is closed then every (optimal) representation $\mu\in\meas{\bwt{A}}$ of an element $m\in\lipfree{A}$ is also a (optimal) representation of $m$ as an element of $\lipfree{M}$, as $\bwt{A}$ can be identified with the closure of $\wt{A}$ in $\bwt{M}$.

As $\wt{M}\subset M\times M$, points of $\wt{M}$ naturally carry a pair of coordinates. In order to extend this to $\bwt{M}$, we consider the identity mapping $\wt{M}\to M\times M$ and extend it to a continuous mapping $\pp:\bwt{M}\to\beta M\times\beta M$. Its first and second projections will be denoted by $\pp_i:\bwt{M}\to\beta M$, $i=1,2$. For a given set $E\subset\bwt{M}$, we will write $\pp_s(E)=\pp_1(E)\cup\pp_2(E)$ and call that subset of $\beta M$ the \emph{shadow} of $E$. Shadows are relevant because they allow us to relate the support of a functional to that of its representations. Indeed, if $m\in\lipfree{M}$ then any representation $\mu\in\meas{\bwt{M}}$ of $m$ satisfies $\supp(m)\subset\pp_s(\supp(\mu))$ \cite[Lemma 8]{Aliaga}. We will also occasionally consider the marginals of measures $\mu\in\meas{\bwt{M}}$, defined as pushforwards $(\pp_i)_\sharp\mu\in\meas{\beta M}$, $i=1,2$, given by $(\pp_i)_\sharp\mu(E)=\mu(\pp_i^{-1}(E))$ for $E\subset\beta M$. These marginals satisfy $\int_{\bwt{M}}(\varphi\circ\pp_i)\,d\mu=\int_{\beta M}\varphi\,d((\pp_i)_\sharp\mu)$ for any integrable Borel function $\varphi:\beta M\to\RR$. 

We make an important remark here: in the paragraph above, we may choose any compactification of $M$ in place of $\beta M$. In fact, the uniform compactification $\ucomp{M}$ is always a superior choice \cite{APS2}. However, the choice of compactification is irrelevant when dealing exclusively with functionals in $\lipfree{M}$, as is the case in this paper, so we will stick to $\beta M$ for simplicity and to avoid extra definitions.

\subsection{Choquet theory of Lipschitz-free spaces}
\label{sec:choquet}

We now provide a short introduction to the theory developed in \cite{Smith24}. The main idea is the introduction of a quasi-order on the set $\meas{\bwt{M}}^+$ of positive Radon measures on $\bwt{M}$ that compares how ``efficient'' they are, in some sense, at representing their corresponding functional. In order to do this, a convex cone $G\subset C(\bwt{M})$ is defined as the set of all functions $g \in C(\bwt{M})$ satisfying
\begin{equation}\label{G-function}
d(x,y)g(x,y) \leq d(x,u)g(x,u) + d(u,y)g(u,y) \quad\text{whenever $x,u,y \in M$ are distinct.}
\end{equation}
It is straightforward to check that $\mathbf{1}_{\bwt{M}} \in G$ and that $\Phi f\in G$ whenever $f \in \Lip_0(M)$; in fact, $\Phi(\Lip_0(M))=G \cap (-G)$.
A quasi-order $\preccurlyeq$ is then defined on $\meas{\bwt{M}}^+$ by declaring
$$
\mu\preccurlyeq\nu \quad\text{whenever}\quad \duality{g,\mu} \leq \duality{g,\nu} \text{ for all $g\in G$,}
$$
where $\duality{g,\mu}$ denotes the integral $\int_{\bwt{M}} g\,d\mu$. 

The motivation behind this definition is illustrated best by means of the following example \cite[Example 3.5]{Smith24}.
Let $M=[0,1]$ (with base point $0$) have the usual metric and consider the measures $\delta_{(1,0)}$ and
$$
\nu = \tfrac{1}{2}(\delta_{(1,\frac{1}{2})} + \delta_{(\frac{1}{2},0)})
$$
on $\wt{M}$. It is easily seen that both are optimal representations of $\delta(1)\in\lipfree{M}$. Moreover, $\delta_{(1,0)} \preccurlyeq \nu$ because
$$
\duality{g,\delta_{(1,0)}} = g(1,0) \leq \tfrac{1}{2}g(1,\tfrac{1}{2}) + \tfrac{1}{2}g(\tfrac{1}{2},0) = \duality{g,\nu}
$$
for all $g \in G$. Intuitively, $\delta_{(1,0)}$ can be viewed as a ``better'' representation of $\delta(1)$ than $\nu$ because it is a single Dirac measure on $\bwt{M}$, while $\nu$ is a combination of two Diracs. By passing from $\nu$ to $\delta_{(1,0)}$, the shared coordinate $\frac{1}{2}$ in the Diracs that comprise $\nu$ has been removed. It is this idea of elimination of shared coordinates that the quasi-order $\preccurlyeq$ is designed to capture.

The relation $\preccurlyeq$ is an analogue of the classical Choquet order. In classical Choquet theory, the focus lies on the measures that are maximal with respect to the Choquet order. Instead, we concentrate on those measures that are \emph{minimal} with respect to $\preccurlyeq$. While $\preccurlyeq$ is reflexive and transitive, it fails to be anti-symmetric because $G$ does not separate points of $\bwt{M}$ in general \cite[Example 3.2]{Smith24}. Thus minimality is defined as follows: we say that $\mu \in \meas{\bwt{M}}^+$ is minimal if $\nu \preccurlyeq \mu$ implies $\mu \preccurlyeq \nu$ for every $\nu \in \meas{\bwt{M}}^+$.

A standard application of Zorn's lemma demonstrates that minimal representations are plentiful. Indeed, for any $\nu \in \meas{\bwt{M}}^+$ there exists a minimal $\mu \in \meas{\bwt{M}}^+$ such that $\mu \preccurlyeq \nu$ \cite[Proposition 3.10]{Smith24}. All such $\mu$ satisfy $\Phi^*\mu = \Phi^* \nu$ and, moreover, if $\nu$ is optimal then so is $\mu$ \cite[Proposition 3.6]{Smith24}. In addition, the relation $\preccurlyeq$ favours some desirable subsets of $\bwt{M}$ over others. For instance, minimal measures try to avoid the set
$$
d^{-1}(0)=\set{\zeta\in\bwt{M} : d(\zeta)=0} ,
$$
which can be interpreted as a sort of ``long diagonal'' in $\bwt{M}$ (here and below, $d:\bwt{M}\to [0,\infty]$ is the continuous extension of $d:\wt{M}\to (0,\infty)$). By trying to avoid $d^{-1}(0)$, we mean $\mu(d^{-1}(0))\leq\nu(d^{-1}(0))$ whenever $\mu\preccurlyeq\nu$ \cite[Proposition 3.18]{Smith24}. In addition, if $\nu$ is concentrated on certain ``good'' sets such as $\wt{M}$, $\pp^{-1}(M\times M)$, or $\bwt{M}\setminus d^{-1}(0)$, then so is $\mu$ \cite[Proposition 3.18 and Corollary 3.27]{Smith24}.

Measures that are both optimal and minimal have a host of agreeable properties. For the purposes of this paper, which focuses on representations of functionals in $\lipfree{M}$, the crucial one is that any such representation must be concentrated on $\pp^{-1}(M\times M)$ \cite[Theorem 5.2]{Smith24}. That is, intuitively, in order to represent such a functional we only need to use coordinates of $M$, not its compactification. Moreover, we can replace $M$ with the support of the functional, possibly adding the base point \cite[Corollary 5.6]{Smith24}.

Some of the key points from the above discussion can be summarised by means of the following statement.

\begin{theorem}[Smith; cf. {\cite[Corollary 5.6]{Smith24}}]\label{th:opt_conc}
Let $m \in \lipfree{M}$. Then there exists $\mu \in \opr{m}$ concentrated on $\pp^{-1}(M\times M)$. Moreover, we can choose it to be concentrated on $\pp^{-1}(A\times A)$, where $A=\supp(m)\cup\set{0}$.
\end{theorem}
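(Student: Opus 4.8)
The plan is to assemble the statement from the three ingredients recalled in Section \ref{sec:choquet}: existence of optimal representations, the minimalisation procedure, and the concentration property of optimal minimal representations. For the first assertion, I would begin with any optimal representation $\nu\in\opr{m}$, which exists because $\Phi^*$ is an adjoint quotient map and positive minimal-norm representations can always be found. Applying the Zorn's lemma argument of \cite[Proposition 3.10]{Smith24} to $\nu$ produces a minimal $\mu\preccurlyeq\nu$; by \cite[Proposition 3.6]{Smith24} we still have $\Phi^*\mu=\Phi^*\nu=m$ and $\mu$ remains optimal, so $\mu\in\opr{m}$ is a minimal optimal representation of $m$. Since $m\in\lipfree{M}$, the concentration result \cite[Theorem 5.2]{Smith24} applies and gives that $\mu$ is concentrated on $\pp^{-1}(M\times M)$, which is exactly the first part.

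For the ``moreover'' part, I would pass to the subspace generated by the support. Put $A=\supp(m)\cup\set{0}$; this is a closed subset of $M$ containing the base point (hence complete), so by McShane's extension theorem $\lipfree{A}$ is canonically an isometric subspace of $\lipfree{M}$, and by the very definition of the support $m$ lies in $\lipfree{A}$ with the same norm. Applying the first part of the theorem with $A$ in the role of the ambient metric space yields an optimal representation $\mu'\in\meas{\bwt{A}}$ of $m$, viewed as an element of $\lipfree{A}$, that is concentrated on $\pp_A^{-1}(A\times A)$, where $\pp_A\colon\bwt{A}\to\beta A\times\beta A$ is the coordinate map of $\bwt{A}$. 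Identifying $\bwt{A}$ with the closure of $\wt{A}$ in $\bwt{M}$, as recalled in Section \ref{sec:de leeuw}, the measure $\mu'$ is also a representation of $m$ in $\lipfree{M}$; it is positive and has norm $\norm{m}$, hence $\mu'\in\opr{m}$. It then remains to check that concentration on $\pp_A^{-1}(A\times A)$ inside $\bwt{A}$ translates into concentration on $\pp^{-1}(A\times A)$ inside $\bwt{M}$.

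The step I expect to be the main obstacle is precisely this last bookkeeping: one must verify that the compactification machinery is natural with respect to the inclusion $A\subset M$, i.e. that the canonical maps $\bwt{A}\to\bwt{M}$, $\beta A\to\beta M$, $\pp_A$ and $\pp$ fit into a commuting diagram, and that $A\subset\beta A$ sits compatibly inside $A\subset M\subset\beta M$, so that a point of $\bwt{A}$ with both $\pp_A$-coordinates in $A$ has both $\pp$-coordinates in $A$ as well. Granting the results quoted from \cite{Smith24}, everything else is a matter of chaining together the cited facts. An alternative to the subspace reduction would be to argue directly that a minimal optimal representation $\mu$ of $m$ already satisfies $\pp_s(\supp\mu)\subset A$, combining the inclusion $\supp(m)\subset\pp_s(\supp\mu)$ from Section \ref{sec:de leeuw} with a locality/minimality argument; this seems more delicate than the reduction above and would itself essentially reprove \cite[Corollary 5.6]{Smith24}, so I would favour the subspace approach.
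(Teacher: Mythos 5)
Your argument is sound, but note that the paper itself does not prove this statement: Theorem \ref{th:opt_conc} is imported directly from \cite[Corollary 5.6]{Smith24}, so what you have written is a reconstruction of how that corollary follows from the more primitive facts quoted in Section \ref{sec:choquet}, namely \cite[Propositions 3.6 and 3.10]{Smith24} together with \cite[Theorem 5.2]{Smith24}. Your first paragraph is exactly the route the paper indicates for the first assertion. For the ``moreover'' part, your subspace reduction is valid, and the bookkeeping you flag as the main obstacle does go through: $\wt{A}$ is a closed subset of the metric space $\wt{M}$ (since $A\times A$ is closed in $M\times M$), so by Tietze's theorem every bounded continuous function on $\wt{A}$ extends boundedly to $\wt{M}$, i.e.\ $\wt{A}$ is $C^*$-embedded and $\bwt{A}$ is canonically homeomorphic to the closure of $\wt{A}$ in $\bwt{M}$ --- an identification the paper already records in Section \ref{sec:de leeuw}, together with the fact that an optimal representation in $\meas{\bwt{A}}$ of $m\in\lipfree{A}$ is an optimal representation of $m$ as an element of $\lipfree{M}$, which is precisely the norm and representation transfer you need. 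For the coordinate maps, if $\iota:\beta A\to\beta M$ is the canonical extension of the inclusion $A\subset M$, then $\pp\restrict_{\bwt{A}}=(\iota\times\iota)\circ\pp_A$, because both are continuous and agree on the dense subset $\wt{A}$; since $\iota$ restricts to the identity on $A$, any $\zeta\in\bwt{A}$ with $\pp_A(\zeta)\in A\times A$ satisfies $\pp(\zeta)\in A\times A$, so concentration on $\pp_A^{-1}(A\times A)$ yields concentration on $\pp^{-1}(A\times A)$. Combined with the observations that $A$ is complete (being closed in $M$), contains the base point, and that $m\in\lipfree{A}$ with unchanged norm by the definition of support and McShane extension, this closes the argument. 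Whether it coincides with Smith's own proof of the corollary cannot be judged from this paper, but it is a correct derivation from the ingredients the paper cites.
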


Most of the results in this paper depend on the theory developed in \cite{Smith24} only through Theorem \ref{th:opt_conc}. Additional properties will only be used in Section \ref{sec:diagonal}.

\section{A compact decomposition principle}

Our first main application of the theory established in \cite{Smith24} is the following decomposition theorem. It will also be the key tool used to prove our other main results, Theorems \ref{th:extreme} and \ref{th:p1u_cim}.

\begin{theorem}\label{th:compact_decomposition}
For every $m \in \lipfree{M}$, there exists a sequence $(m_n)$ in $\lipfree{M}$ such that
\begin{equation}\label{eq:compact_decomposition}
m = \sum_{n=1}^\infty m_n \quad\text{,}\quad \norm{m} = \sum_{n=1}^\infty \norm{m_n} \quad\text{and}\quad \text{$\supp(m_n)$ is compact for every $n$.}
\end{equation}
Moreover, the $m_n$ can be chosen so that $\supp(m_n)\subset\supp(m)$.
\end{theorem}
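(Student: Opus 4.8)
The plan is to take an optimal representation $\mu\in\opr{m}$ provided by Theorem \ref{th:opt_conc}, concentrated on $\pp^{-1}(A\times A)$ where $A=\supp(m)\cup\set{0}$, and to slice it according to how far its coordinates sit from the base point. Since $\mu$ is a finite positive measure on $\bwt{M}$ concentrated on $\pp^{-1}(A\times A)$, and since $A$ is separable (supports are separable), I will choose an exhausting sequence of bounded closed sets: for a fixed point $0\in A$, let $B_k=\set{x\in A : d(x,0)\leq k}$, so that $A=\bigcup_k B_k$. The preimages $\pp^{-1}(B_k\times B_k)$ then form an increasing sequence of Borel subsets of $\bwt{M}$ whose union carries $\mu$, so $\mu(\pp^{-1}(B_k\times B_k))\to\norm{\mu}=\norm{m}$. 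Set $C_k=\pp^{-1}(B_k\times B_k)$ and define $\mu_1=\mu\restrict_{C_1}$ and $\mu_n=\mu\restrict_{C_n\setminus C_{n-1}}$ for $n\geq2$, so that $\mu=\sum_n\mu_n$ as a measure and $\norm{\mu}=\sum_n\norm{\mu_n}$. Then put $m_n=\Phi^*\mu_n$.

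The verification splits into three pieces. First, $\sum_n m_n=m$: the partial sums $\Phi^*(\mu\restrict_{C_n})$ converge to $\Phi^*\mu=m$ in norm, because $\norm{\mu-\mu\restrict_{C_n}}=\norm{\mu\restrict_{\bwt{M}\setminus C_n}}\to 0$ and $\Phi^*$ is non-expansive. Second, $\norm{m}=\sum_n\norm{m_n}$: each $\mu_n$ is a restriction of the optimal measure $\mu$, hence optimal by the downward-closedness of $\opr{\bwt{M}}$ recalled in Section \ref{sec:de leeuw}, so $\norm{m_n}=\norm{\Phi^*\mu_n}=\norm{\mu_n}$; summing and using $\norm{\mu}=\norm{m}$ gives the identity (the inequality $\norm{m}\leq\sum_n\norm{m_n}$ is automatic). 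Third, and this is the only substantive point, I must show $\supp(m_n)$ is compact and contained in $\supp(m)$. For the inclusion in $\supp(m)$: by the shadow inequality $\supp(m_n)\subset\pp_s(\supp(\mu_n))$, and $\mu_n$ is supported inside $\overline{C_n}\subset\pp^{-1}(\beta B_n\times\beta B_n)$, so the shadow of its support lies in $\beta B_n\cap M$; combined with $\supp(m_n)\subset M$ this will need to be intersected with $A$. Actually the cleaner route is to observe that each $\mu_n$ is an optimal representation of $m_n$ that is concentrated on $\pp^{-1}(B_n\times B_n)$, hence (identifying $\bwt{B_n\cup\set{0}}$ with the closure of $\wt{B_n}$ in $\bwt{M}$) $\mu_n$ is an optimal representation of $m_n$ viewed as an element of $\lipfree{B_n\cup\set{0}}$; therefore $m_n\in\lipfree{B_n\cup\set{0}}$ and $\supp(m_n)\subset B_n\cup\set{0}\subset A$. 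Since supports cannot contain $0$ as an isolated point, and $\supp(m_n)\subset\supp(m)$ will follow once we know $\supp(m_n)\subset A$ together with $\supp(m_n)\subset\supp(m)\cup\set{0}$ and minimality of the support inside $\lipfree{\supp(m)\cup\set0}$ — this is exactly the kind of bookkeeping argument used for supports in \cite{AP_rmi,APPP_2020}.

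The main obstacle is establishing that $\supp(m_n)$ is \emph{compact} rather than merely bounded. Boundedness is immediate from $\supp(m_n)\subset B_n$, but a closed bounded subset of a general metric space need not be compact. The fix is that we are free to choose the metric space in which we work: $\supp(m_n)$ is a closed, bounded, \emph{and} separable subset of $M$, but that still is not enough. What saves us is completeness together with the fact that $\supp(m_n)$ is not an arbitrary closed bounded set — one should instead run the slicing argument using totally bounded sets. Concretely, rather than $B_k=\set{d(\cdot,0)\leq k}$, I would fix a countable dense subset $\set{x_j}$ of $A$ and take $B_k$ to be the closure of a finite union of balls chosen so that $\bigcup_k B_k\supset\set{x_j}$ and each $B_k$ is totally bounded (possible since one can take $B_k=\bigcup_{j\leq k}\overline{B}(x_j,r_{k,j})$ and shrink the radii; but total boundedness of such a union is again not automatic). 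The genuinely correct device, and the one I expect the authors use, is: since $A$ is separable it embeds isometrically into a compact metric space is false, but $\lipfree{A\cup\set0}$ only sees $A$ through McShane extension, so one may replace $A$ by a \emph{countable} dense subset and take $B_k$ to be the $(1/k)$-neighbourhoods; the support of $m_n$ then lies in a set whose closure in the completion is compact precisely when $A$ is proper. In general it is not, so the honest argument must be: pass to a minimal-support representation, use that minimal representations concentrate on $\pp^{-1}(A\times A)$ with $A$ replaced by $\supp(m_n)$, and invoke a compactness criterion for supports. I will therefore need one extra ingredient beyond Theorem \ref{th:opt_conc}, namely that if an element of a Lipschitz-free space has a representation concentrated on $\pp^{-1}(K\times K)$ for some compact $K$, then its support is compact — and the slicing must be arranged so that the relevant $K$ is genuinely compact, which forces the exhausting sets $C_n$ to be chosen as preimages of products of compact subsets of $A$, available because $\mu$ is a Radon measure on $\bwt{M}$ and hence inner regular by compact sets.
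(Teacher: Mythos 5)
Your overall strategy is the right one, and by the end you have essentially talked yourself into the paper's argument: slice an optimal representation $\mu\in\opr{m}$ concentrated on $\pp^{-1}(A\times A)$, $A=\supp(m)\cup\set{0}$, not along closed bounded sets (which, as you correctly note, only give bounded supports) but along sets $\pp^{-1}(K\times K)$ with $K$ compact, obtained by inner regularity of the Radon measure $\mu$: pick compact $\mathcal{K}\subset\pp^{-1}(A\times A)$ carrying most of the mass and take $K=\pp_s(\mathcal{K})$, which is compact and satisfies $\mathcal{K}\subset\pp^{-1}(K\times K)$. This is exactly how the paper proceeds (its Proposition on inner regularity, applied repeatedly), so the compactness issue you wrestle with is fixable precisely along the lines you sketch, and the inclusion $\supp(m_n)\subset\supp(m)$ then follows from the shadow lemma together with the fact that $0$ cannot be isolated in a support.

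However, there is a genuine gap that your proposal never addresses: you must prove that each slice $m_n=\Phi^*\bigl(\mu\restrict_{C_n}\bigr)$ actually belongs to $\lipfree{M}$. Downward-closedness of $\opr{\bwt{M}}$ only gives that restrictions of $\mu$ are optimal as representations of elements of $\Lip_0(M)^*$, i.e. it gives the norm additivity $\norm{m}=\sum_n\norm{m_n}$; it says nothing about membership in the free space, and without that membership the notion of $\supp(m_n)$ is not even defined. Your argument for this point --- ``$\mu_n$ is concentrated on $\pp^{-1}(B_n\times B_n)$, hence it is an optimal representation of $m_n$ viewed as an element of $\lipfree{B_n\cup\set{0}}$, therefore $m_n\in\lipfree{B_n\cup\set{0}}$'' --- is circular: a positive measure concentrated on the closure of $\wt{B_n}$ in $\bwt{M}$ only represents an element of $\Lip_0(B_n\cup\set{0})^*$ (for instance, Dirac measures on the diagonal $\Delta(M)$ typically represent derivation-like functionals outside the free space), so concentration alone cannot yield the conclusion. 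This is precisely the technical heart of the paper's proof: one needs the weighting lemma (Lemma \ref{lm:Aliaga_Borel}, built on Lemma \ref{lm:Aliaga_variant}) showing that if $\mu$ represents an element of $\lipfree{M}$ and is concentrated on $\pp^{-1}(M\times M)$ --- which is where Theorem \ref{th:opt_conc} is really used --- then $d\lambda=(\mathbf{1}_{K\times K}\circ\pp)\,d\mu$ again represents an element of $\lipfree{M}$. Until you supply this ingredient (or an equivalent), the decomposition you construct is only a decomposition in $\Lip_0(M)^*$, not the statement of the theorem.
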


\begin{proof}
The statement will follow immediately by repeated application of Proposition \ref{pr:inner regularity} below.
\end{proof}

This theorem relies crucially on Theorem \ref{th:opt_conc} and is, in fact, equivalent to it (the converse implication follows from an argument similar to that of Theorem \ref{th:p1u_cim}). Thus, we regard Theorem \ref{th:compact_decomposition} as a ``user-friendly'' version of Theorem \ref{th:opt_conc}. Theorem \ref{th:compact_decomposition} can also be understood as a weak form of inner regularity for elements of Lipschitz-free spaces (see Proposition \ref{pr:inner regularity} below), or as an isometric counterpart to the compact reduction principle from \cite{ANPP}.

\begin{remark}
The decomposition in Theorem \ref{th:compact_decomposition} is a convex sum, but it cannot be chosen to be an $\ell_1$ sum in general, that is, it does not necessarily hold that $\norm{\sum_n\varepsilon_nm_n} = \sum_n\norm{m_n}$ for every choice of signs $\varepsilon_n\in\set{1,-1}$. Indeed, suppose this were true. For any non-compact $M$, we can find $m\in\lipfree{M}$ with non-compact support, e.g. $m=\sum_n 2^{-n}\delta(x_n)/d(x_n,0)$ where $(x_n)$ is a sequence in $M\setminus\set{0}$ with no convergent subsequence. Then the representation \eqref{eq:compact_decomposition} of $m$ would contain infinitely many non-zero elements $m_n$, which would become an isometric $\ell_1$ basis after normalising. It would follow that $\lipfree{M}$ contains $\ell_1$ isometrically whenever $M$ is not compact. An example due to Ostrovska and Ostrovskii shows that this is not the case \cite[Theorem 3.1]{OO1}.
\end{remark}

Our way towards the proof of Theorem \ref{th:compact_decomposition} starts with the following lemma, which is an adaptation of \cite[Lemma 11]{Aliaga}. The original lemma assumes that $M$ is \emph{proper}, i.e. its closed balls are compact, which implies that optimal De Leeuw representations of elements of $\lipfree{M}$ are concentrated on $\pp^{-1}(M\times M)$ \cite[Proposition 7]{Aliaga}. It turns out that this is the only essential requirement for the argument. Thanks to Theorem \ref{th:opt_conc}, we will be able to find representations satisfying this assumption even in the non-compact case.

\begin{lemma}[Aliaga; cf. {\cite[Lemma 11]{Aliaga}}]\label{lm:Aliaga_variant}
Let $\mu\in \meas{\bwt{M}}$ be a representation of an element of $\lipfree{M}$. Suppose that $\mu$ is concentrated on $\pp^{-1}(M \times M)$. Let $h:M\to\RR$ be a Lipschitz function with bounded support and $i\in\set{1,2}$, and define $\lambda\in\meas{\bwt{M}}$ by $d\lambda = (h\circ\pp_i)\,d\mu$. Then $\lambda$ represents an element of $\lipfree{M}$.
\end{lemma}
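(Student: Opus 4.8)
The plan is to show that $\lambda$ defines a functional in $\lipfree{M}$ by verifying that $\Phi^*\lambda$ annihilates all functions $f\in\Lip_0(M)$ that vanish on a suitable subset — more precisely, that $\Phi^*\lambda$ lies in the weak$^*$-closed subspace of $\dual{\Lip_0(M)}$ generated by evaluations, which by standard duality is exactly $\lipfree{M}$. Concretely, $m\in\dual{\Lip_0(M)}$ belongs to $\lipfree{M}$ if and only if $m$ is weak$^*$-continuous, equivalently $\duality{f_\alpha,m}\to 0$ for every bounded net $(f_\alpha)$ in $\Lip_0(M)$ converging pointwise to $0$. So the goal is to compute $\duality{f,\Phi^*\lambda}=\int_{\bwt M}\Phi f\,d\lambda=\int_{\bwt M}\Phi f\cdot(h\circ\pp_i)\,d\mu$ and rewrite this integral in a form that visibly depends weak$^*$-continuously on $f$.

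The key computational step, following \cite[Lemma 11]{Aliaga}, is the identity valid for $(x,y)\in\wt M$:
\begin{equation*}
\Phi f(x,y)\cdot h(x) = \Phi(fh)(x,y) + f(y)\cdot\Phi h(x,y),
\end{equation*}
which one checks directly by expanding $\Phi(fh)(x,y)=\frac{f(x)h(x)-f(y)h(y)}{d(x,y)}$ and adding and subtracting $f(y)h(x)$. Here $fh\in\Lip_0(M)$ since $f$ is Lipschitz and $h$ is bounded Lipschitz with bounded support (so $fh$ is Lipschitz with $(fh)(0)=0$, the latter because either $h(0)=0$ or we can arrange it, or simply because $fh\in\Lip_0(M)$ automatically when $f(0)=0$ — note $f(0)h(0)=0$). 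Similarly for $i=2$ there is the symmetric identity with the roles of $x,y$ swapped and an overall sign. Now the crucial point: since $\mu$ is concentrated on $\pp^{-1}(M\times M)$, the function $(x,y)\mapsto f(\pp_2(x,y))$ is the $\mu$-almost-everywhere restriction to $\bwt M$ of the continuous extension of $f\circ\pp_2$, but more importantly $f(\pp_i(\cdot))$ is bounded on $\pp^{-1}(M\times M)$ — wait, $f$ need not be bounded on $M$. This is where the bounded support of $h$ enters: $h\circ\pp_i$ vanishes off $\pp_i^{-1}(\supp h)$, and on that set (intersected with $\pp^{-1}(M\times M)$) the coordinate $\pp_i$ takes values in the bounded set $\supp h$, so $f\circ\pp_i$ is bounded there by $\Lip(f)\cdot\diam(\supp h\cup\{0\})$. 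Hence all the integrals $\int \Phi(fh)\,d\mu$, $\int (f\circ\pp_i)\,\Phi h\,d\mu$ make sense, and we obtain
\begin{equation*}
\duality{f,\Phi^*\lambda} = \duality{fh,\Phi^*\mu} \pm \int_{\pp^{-1}(M\times M)} (f\circ\pp_i)\,\Phi h\,d\mu.
\end{equation*}

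The first term is weak$^*$-continuous in $f$: the map $f\mapsto fh$ is bounded linear from $\Lip_0(M)$ to itself and is weak$^*$-to-weak$^*$ continuous (it is the adjoint of the weighting operator on $\lipfree M$, or one checks pointwise convergence is preserved since $h$ is fixed and bounded), and $\Phi^*\mu\in\lipfree M$ by hypothesis. For the second term, take a bounded net $f_\alpha\to 0$ pointwise; then $f_\alpha\circ\pp_i\to 0$ pointwise on $\pp^{-1}(M\times M)$, it is supported on the fixed set $\pp_i^{-1}(\supp h)$ where it is uniformly bounded (by the bounded-support argument above, uniformly in $\alpha$ since the net is bounded in Lipschitz norm), and $\Phi h$ is integrable against $|\mu|$; so dominated convergence gives that the integral tends to $0$. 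Therefore $\Phi^*\lambda$ is weak$^*$-continuous, i.e. $\lambda$ represents an element of $\lipfree M$. I expect the main obstacle to be the bookkeeping around integrability and the role of the concentration hypothesis on $\pp^{-1}(M\times M)$: without it, $f\circ\pp_i$ need only be defined and continuous on $\beta M$ when $f$ is bounded, which fails for general Lipschitz $f$, so one must use that $h$ has bounded support to confine the relevant coordinate to a bounded subset of $M$ where $f$ genuinely is bounded — and to check carefully that the continuous extensions and the $\mu$-a.e. identities are compatible, using that $\mu(\bwt M\setminus\pp^{-1}(M\times M))=0$.
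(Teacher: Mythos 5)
Your overall skeleton (the product rule $\Phi(fh)=(\Phi f)\cdot(h\circ\pp_1)+(\Phi h)\cdot(f\circ\pp_2)$ on $\pp^{-1}(M\times M)$, splitting $\duality{f,\Phi^*\lambda}$ into a weighted term $\duality{fh,\Phi^*\mu}$ plus a remainder, and disposing of the weighted term via weak$^*$-to-weak$^*$ continuity of $f\mapsto fh$) matches the paper's proof, but the treatment of the remainder term contains a genuine gap. For $i=1$ the remainder is $\int(f\circ\pp_2)\,\Phi h\,d\mu$ — note it involves the \emph{other} coordinate, not $f\circ\pp_i$ — and its weight is $\Phi h$, not $h\circ\pp_i$. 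Your boundedness argument ("$h\circ\pp_i$ vanishes off $\pp_i^{-1}(\supp h)$, so $f\circ\pp_i$ is bounded on the support of the integrand") therefore controls the wrong set: $\Phi h(x,y)\neq 0$ only forces $x\in\supp h$ \emph{or} $y\in\supp h$, and on the region where $x\in\supp h$ but $y$ is far from the base point, $f(y)$ is unbounded. What rescues the integral there is the decay $\abs{\Phi h(x,y)}=\abs{h(x)}/d(x,y)\lesssim 1/d(y,0)$, so that the product $d(y,0)\abs{\Phi h(x,y)}$ remains bounded; establishing this is exactly the three-case estimate in the paper's proof (yielding the bound $2(r\lipnorm{h}+\abs{h(0)})$), equivalently the finiteness of $\int_M d(x,0)\,d\abs{(\pp_2)_\sharp\nu}(x)$ with $d\nu=(\Phi h)\,d\mu$, which via \cite[Proposition 4.4]{AP_measures} is precisely what makes the remainder an element of $\lipfree{M}$. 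Without that estimate your claimed uniform domination is simply false, and the heart of the lemma is missing.

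A secondary problem is that you apply the dominated convergence theorem along a \emph{net} $(f_\alpha)$; DCT fails for nets in general, so the weak$^*$-continuity verification is not complete even granting a correct dominating bound. This can be repaired — e.g.\ use that bounded sets of $\Lip_0(M)$ are equi-Lipschitz, tightness of the (Radon) pushforward measure and its finite first moment to pass from pointwise to uniform convergence on large compact sets, or reduce to separable $M$ where sequences suffice — but the cleanest route is the paper's: do not verify weak$^*$-continuity by hand, instead recognise the remainder as integration of $f$ against the measure $(\pp_2)_\sharp\nu$ on $M$ and invoke the characterisation \cite[Proposition 4.4]{AP_measures}, and handle the first term by \cite[Lemma 2.3]{APPP_2020} (your weighting-operator argument is essentially this and is fine). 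The sign slip in the product identity and the $\pp_i$ versus $\pp_{3-i}$ index confusion are cosmetic, but the missing estimate on $d(\pp_2(\cdot),0)\,\Phi h$ is the substantive issue to fix.
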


Note that the function $h\circ\pp_i$ is not well-defined because $h$ is defined on $M$ while $\pp_i$ takes values in a compactification of $M$, so to be correct we should consider an extension of $h$. However, if $\mu$ is concentrated on $\pp^{-1}(M\times M)$ then the value of $h$ outside of $M$ is irrelevant, so we permit this abuse of notation in both Lemmas \ref{lm:Aliaga_variant} and \ref{lm:Aliaga_Borel} below.

The proof of Lemma \ref{lm:Aliaga_variant} is almost identical to that of \cite[Lemma 11]{Aliaga}. We reproduce the argument here for completeness (and take the opportunity to fix a small mistake in the original).

\begin{proof}
Without loss of generality, assume $i=1$. Define $\nu\in\meas{\bwt{M}}$ by $d\nu=(\Phi h)\,d\mu$. Since $h\circ\pp_1$ and $\Phi h$ are continuous and bounded, we have $\lambda,\nu\in\meas{\bwt{M}}$. Moreover, they are concentrated on $\pp^{-1}(M \times M)$ because $\mu$ is.
For any $f\in\Lip_0(M)$, we have $fh\in\Lip_0(M)$ and a straightforward computation shows that the identity
$$
\Phi (fh) = (\Phi f)\cdot (h\circ\pp_1) + (\Phi h)\cdot (f\circ\pp_2)
$$
holds in $\wt{M}$, hence also in $\pp^{-1}(M\times M)$ by continuous extension. Therefore
\begin{align*}
\duality{fh,\dual{\Phi}\mu} &= \int_{\pp^{-1}(M \times M)}(\Phi f)(h\circ\pp_1)\,d\mu + \int_{\pp^{-1}(M \times M)}(f\circ\pp_2)(\Phi h)\,d\mu \\
&= \duality{f,\dual{\Phi}\lambda} + \int_{\pp^{-1}(M\times M)}(f\circ\pp_2)\,d\nu \\
&= \duality{f,\dual{\Phi}\lambda} + \int_M f\,d((\pp_2)_\sharp\nu) .
\end{align*}
In order to prove that $\Phi^*\lambda\in\lipfree{M}$, it will suffice to show that the same holds for the functionals $f\mapsto\duality{fh,\dual{\Phi}\mu}$ and $f \mapsto \int_M f\,d((\pp_2)_\sharp\nu)$. The former follows e.g. from \cite[Lemma 2.3]{APPP_2020}. By \cite[Proposition 4.4]{AP_measures}, the latter is equivalent to finiteness of the integral
$$
\int_M d(x,0)\,d\abs{(\pp_2)_\sharp\nu}(x) \leq \int_M d(x,0)\,d((\pp_2)_\sharp\abs{\nu})(x) = \int_{\pp^{-1}(M\times M)} d(\pp_2(\zeta),0)\cdot\abs{\Phi h(\zeta)}\,d\abs{\mu}(\zeta)
$$
so, by continuity, it is enough to show that the function $g(x,y)=d(y,0)\abs{\Phi h(x,y)}$ is bounded for $(x,y)\in\wt{M}$. To that end, fix $r>0$ such that $\supp(h)\subset B(0,r)$. We consider three cases.
\begin{itemize}
\item If $y\in B(0,2r)$, then $g(x,y)\leq 2r\lipnorm{h}$.
\item If $y\notin B(0,2r)$ and $x\in B(0,r)$, then $h(y)=0$ and
$$
g(x,y) = \frac{d(y,0)}{d(x,y)}\cdot |h(x)| \leq \pare{1+\frac{d(x,0)}{d(x,y)}}\cdot |h(x)| \leq 2\cdot (r\lipnorm{h}+|h(0)|) .
$$
\item If $y\notin B(0,2r)$ and $x\notin B(0,r)$, then $h(y)=h(x)=0$ and $g(x,y)=0$.
\end{itemize}
So $g$ is bounded by $2(r\lipnorm{h}+|h(0)|)$ on $\wt{M}$, and this finishes the proof.
\end{proof}

Next, we extend Lemma \ref{lm:Aliaga_variant} to non-Lipschitz weighting functions $h$.

\begin{lemma}\label{lm:Aliaga_Borel}
Let $\mu\in\meas{\bwt{M}}$ be a representation of an element of $\lipfree{M}$. Suppose that $\mu$ is concentrated on $\pp^{-1}(M \times M)$. Let $h:M\times M\to\RR$ be a Borel function such that
$$
\int_{\bwt{M}} \abs{h}\circ\pp\, d\abs{\mu} < \infty ,
$$
and define $\lambda\in\meas{\bwt{M}}$ by $d\lambda = (h\circ\pp)\,d\mu$. Then $\lambda$ represents an element of $\lipfree{M}$. If moreover $\mu$ is optimal and $h\geq 0$, then $\lambda$ is optimal.
\end{lemma}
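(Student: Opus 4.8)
The plan is to prove the conclusion first for bounded $h$, then to remove boundedness by truncation, and finally to deal with optimality. Throughout, write $\lambda_h$ for the measure with $d\lambda_h=(h\circ\pp)\,d\mu$ (well-defined with the same abuse of notation as above, since $\mu$ is concentrated on $\pp^{-1}(M\times M)$), and let $\mathcal{D}$ be the set of bounded Borel functions $h:M\times M\to\RR$ such that $\lambda_h$ represents an element of $\lipfree{M}$. Then $\mathcal{D}$ is a vector space, it contains $\mathbf{1}_{M\times M}$ because $\lambda_{\mathbf{1}}=\mu$, and it is closed under uniformly bounded pointwise convergence of sequences: if $h_k\in\mathcal{D}$, $\sup_k\norm{h_k}_\infty<\infty$, and $h_k\to h$ pointwise, then $\norm{\lambda_{h_k}-\lambda_h}=\int_{\bwt{M}}\abs{h_k-h}\circ\pp\,d\abs{\mu}\to0$ by dominated convergence, hence $\Phi^*\lambda_{h_k}\to\Phi^*\lambda_h$ in norm because $\Phi^*$ is non-expansive, and $\Phi^*\lambda_h\in\lipfree{M}$ because $\lipfree{M}$ is norm-closed.

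Next I would show that $\mathcal{D}$ contains every product $h_1\otimes h_2$, where $h_1,h_2:M\to\RR$ are Lipschitz with bounded support and $(h_1\otimes h_2)(x,y)=h_1(x)h_2(y)$. Indeed, Lemma~\ref{lm:Aliaga_variant} applied to $\mu$ with $h_1$ and $i=1$ shows that $\nu$ given by $d\nu=(h_1\circ\pp_1)\,d\mu$ represents an element of $\lipfree{M}$; since $\nu$ is again concentrated on $\pp^{-1}(M\times M)$, applying Lemma~\ref{lm:Aliaga_variant} to $\nu$ with $h_2$ and $i=2$ shows the same for the measure with density $h_2\circ\pp_2$ against $\nu$, which is exactly $\lambda_{h_1\otimes h_2}$ since $(h_2\circ\pp_2)(h_1\circ\pp_1)=(h_1\otimes h_2)\circ\pp$. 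Write $\mathcal{L}$ for the set of Lipschitz functions on $M$ with bounded support. The family $\set{h_1\otimes h_2:h_1,h_2\in\mathcal{L}}$ is contained in $\mathcal{D}$, is stable under multiplication, and generates the product $\sigma$-algebra $\Sigma=\sigma(\mathcal{L})\otimes\sigma(\mathcal{L})$ on $M\times M$; so by the functional form of Dynkin's multiplicative system theorem, $\mathcal{D}$ contains every bounded $\Sigma$-measurable function.

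To pass to an arbitrary bounded Borel $h$, note that $\theta=\pp_\sharp\abs{\mu}$ is a finite Radon measure concentrated on $M\times M$ (since $\mu$ is concentrated on $\pp^{-1}(M\times M)$), hence concentrated on a $\sigma$-compact, in particular separable, Borel set $S\subseteq M\times M$. Now $\mathcal{B}(S)$ is generated by the distance functions to the points of a countable dense subset of $S$; the distance from $(x,y)$ to a point $(a_1,a_2)$ is a Borel function of $d(x,a_1)$ and $d(y,a_2)$, while for $a\in M$ the function $d(\cdot,a):M\to\RR$ is $\sigma(\mathcal{L})$-measurable, being a pointwise limit of members of $\mathcal{L}$; hence $\mathcal{B}(S)\subseteq\Sigma\restrict_S$. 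Therefore $h$ agrees on $S$ with a bounded $\Sigma$-measurable function $k$, so $h\circ\pp=k\circ\pp$ holds $\abs{\mu}$-almost everywhere and $\lambda_h=\lambda_k$ represents an element of $\lipfree{M}$. Finally, for a general Borel $h$ with $\int_{\bwt{M}}\abs{h}\circ\pp\,d\abs{\mu}<\infty$, the truncations $h^{(N)}=\max(-N,\min(N,h))$ lie in $\mathcal{D}$ and satisfy $\norm{\lambda_{h^{(N)}}-\lambda}\to0$ by dominated convergence, so $\lambda$ represents an element of $\lipfree{M}$ as before. For the optimality claim, let $\mu$ be optimal and $h\geq0$: if $h$ is bounded then $0\leq\lambda_h\leq\norm{h}_\infty\mu$, and since $\norm{h}_\infty\mu$ is optimal and $\opr{\bwt{M}}$ is closed downward, $\lambda_h$ is optimal; in general $\lambda_{h^{(N)}}$ is optimal and converges to $\lambda$ in norm, and $\opr{\bwt{M}}$ is norm-closed, so $\lambda$ is optimal.

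The two applications of Lemma~\ref{lm:Aliaga_variant} and the optimality argument are routine. The crux is the passage from product (equivalently, $\Sigma$-measurable) weights to arbitrary Borel weights. There are two difficulties: unbounded $h$, handled by truncation together with the closedness of $\lipfree{M}$ and $\opr{\bwt{M}}$ and the non-expansiveness of $\Phi^*$; and a possibly non-separable $M$, for which $\set{h_1\otimes h_2:h_1,h_2\in\mathcal{L}}$ need not generate all of $\mathcal{B}(M\times M)$. The second difficulty is bypassed by observing that the only relevant measure is $\pp_\sharp\abs{\mu}$, which is Radon and hence concentrated on a $\sigma$-compact, therefore separable, set, where products of Lipschitz functions with bounded support already do generate the Borel structure.
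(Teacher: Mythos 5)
Your proof is correct, and its middle portion takes a genuinely different route from the paper's. The paper works one coordinate at a time: starting from Lemma \ref{lm:Aliaga_variant} it passes to $\mathbf{1}_K\circ\pp_1$ for compact $K$ via decreasing Lipschitz approximants, then to Borel sets in one coordinate by inner regularity, then to rectangles, then to closed subsets of $M\times M$ by writing the complement as a countable union of open rectangles (this is where it first reduces to separable $M$, replacing $M$ by the closed span of the shadows of a $\sigma$-compact carrier of $\mu$) and taking infinite products, then to Borel sets and finally to simple functions and $L_1(\pp_\sharp\abs{\mu})$-approximation. You instead apply Lemma \ref{lm:Aliaga_variant} twice to obtain genuinely two-variable weights $h_1\otimes h_2$ with $h_1,h_2$ Lipschitz of bounded support, and then let the functional form of the monotone class / multiplicative system theorem absorb the entire chain of intermediate cases in one stroke; your class $\mathcal{D}$ is closed under all uniformly bounded pointwise sequential limits, which is more than the monotone closure the theorem requires, so the invocation is valid. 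Your treatment of non-separability --- keeping $M$ fixed but observing that $\pp_\sharp\abs{\mu}$ lives on a $\sigma$-compact, hence separable, set $S$ on which $\mathcal{B}(S)\subset\bigl(\sigma(\mathcal{L})\otimes\sigma(\mathcal{L})\bigr)\restrict_S$ --- is the same idea as the paper's reduction, dressed differently; the only step a careful reader should pause on is the extension of a trace-measurable function on $S$ to a $\Sigma$-measurable function on $M\times M$, which is standard (prove it for simple functions and pass to limits). The truncation argument for unbounded $h$ and the optimality argument (downward closedness of $\opr{\bwt{M}}$ for bounded $h\geq 0$, then norm-closedness) coincide with the paper's. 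What your approach buys is brevity and a cleaner conceptual structure, at the cost of importing the Dynkin-type theorem; the paper's approach is longer but entirely self-contained and makes the role of regularity of $\mu$ explicit at each stage.
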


\begin{proof}
Note that $\lambda$ is really a Radon measure on $\bwt{M}$ as its total variation $\norm{\lambda} \leq \int_{\bwt{M}}\abs{h\circ\pp}d\abs{\mu}$ is finite. Also, because of the regularity of $\mu$ we may assume that $M$ is separable by replacing it with $M'=\cl{\bigcup_n \pp_s(K_n)}$, where $(K_n)$ is a sequence of compact subsets of $\pp^{-1}(M\times M)$ such that $\mu$ is concentrated on $\bigcup_n K_n$.

By Lemma \ref{lm:Aliaga_variant}, $\Phi^*\lambda\in\lipfree{M}$ when $h\circ\pp=g\circ\pp_i$ for some Lipschitz function $g:M\to\RR$ with bounded support. We will now extend this fact to more general classes of functions $h$ incrementally.

Suppose first that $h\circ\pp=\mathbf{1}_K\circ\pp_1$ for some compact set $K\subset M$. Given $n \in \NN$, define $h_n:M\to[0,1]$ by $h_n(x)=\max\set{0,1-nd(x,K)}$. These functions are Lipschitz, have bounded support, and decrease pointwise and monotonically to $\mathbf{1}_K$ on $M$. Define measures $\lambda_n\in\meas{\bwt{M}}$ by $d\lambda_n=(h_n \circ \pp_1)\,d\mu$. Then 
$$
\norm{\Phi^*\lambda_n-\Phi^*\lambda} \leq \norm{\lambda_n-\lambda}
\leq \int_{\pp^{-1}(M\times M)} (h_n-h) \circ \pp_1 \,d\abs{\mu} ,
$$
which converges to $0$ by the dominated convergence theorem. Therefore $\Phi^*\lambda_n\to \Phi^*\lambda$ in norm, but $\Phi^*\lambda_n\in\lipfree{M}$ for all $n$ by Lemma \ref{lm:Aliaga_variant}, so $\Phi^*\lambda\in \lipfree{M}$ as well.

Next, suppose that $h\circ\pp=\mathbf{1}_E\circ\pp_1$ for some Borel set $E\subset M$. Fix $\varepsilon>0$ and, by regularity, find a compact $K\subset E$ with $(\pp_1)_\sharp\abs{\mu}(E\setminus K)<\varepsilon$. Define $\lambda'\in\meas{\bwt{M}}$ by $d\lambda'=(\mathbf{1}_K\circ\pp_1)\,d\mu$. Then $\Phi^*\lambda'\in\lipfree{M}$ by the previous case and
$$
\norm{\Phi^*\lambda-\Phi^*\lambda'} \leq \norm{\lambda-\lambda'} \leq \int_{\bwt{M}} (\mathbf{1}_E-\mathbf{1}_K)\circ\pp_1\,d\abs{\mu} = (\pp_1)_\sharp\abs{\mu}(E\setminus K) < \varepsilon
$$
so $\dist(\Phi^*\lambda,\lipfree{M})<\varepsilon$. Since $\varepsilon$ was arbitrary, it follows that $\Phi^*\lambda\in\lipfree{M}$.

Clearly, the previous argument is valid with $\pp_2$ in place of $\pp_1$. Hence, if $E,F$ are Borel subsets of $M$ then $h=\mathbf{1}_{E\times F}$ also yields $\Phi^*\lambda\in\lipfree{M}$ as $d\lambda = (\mathbf{1}_{E\times F}\circ\pp)\,d\mu = (\mathbf{1}_E\circ\pp_1)\cdot(\mathbf{1}_F\circ\pp_2)\,d\mu$.

Now assume that $h=\mathbf{1}_K$ where $K\subset M\times M$ is closed. Let $U=(M\times M)\setminus K$. Since $M$ is separable, $U$ can be written as the union of countably many basic open sets of the form $V_n\times W_n$, where $V_n,W_n\subset M$ are open. Therefore
$$
\mathbf{1}_K = 1 - \mathbf{1}_U = \prod_{n=1}^\infty (1 - \mathbf{1}_{V_n\times W_n})
$$
pointwise on $M\times M$. Define inductively $\lambda_0=\mu$ and
$$
d\lambda_{n+1} = (1-(\mathbf{1}_{V_n\times W_n}\circ\pp))\,d\lambda_n = (1-(\mathbf{1}_{V_n}\circ\pp_1)\cdot(\mathbf{1}_{W_n}\circ\pp_2))\,d\lambda_n
$$
for $n\geq 0$, then $\Phi^*\lambda_n\in\lipfree{M}$ for every $n$ by the previous cases and induction, and
$$
\norm{\Phi^*\lambda-\Phi^*\lambda_n} \leq \norm{\lambda-\lambda_n} \leq \int_{\pp^{-1}(M\times M)}\left(\mathbf{1}_K - \prod_{j=1}^n (1 - \mathbf{1}_{V_j\times W_j})\right)\circ\pp \,d|\mu|
$$
converges to $0$ by the dominated convergence theorem. Therefore $\Phi^*\lambda\in\lipfree{M}$.

The case $h=\mathbf{1}_E$ for Borel $E\subset M\times M$ follows from the previous one in the same way as it did for one dimension.

Finally, suppose that $h$ is a Borel function. Then $h\in L_1(\pp_\sharp\abs{\mu})$ by hypothesis so, for any $\varepsilon>0$, we can find a simple function $s=\sum_{k=1}^n a_k\mathbf{1}_{E_k}$ for some $n\in\NN$, $a_k\in\RR$, and Borel $E_k\subset M\times M$, such that $\norm{h-s}_{L_1(\pp_\sharp\abs{\mu})} < \varepsilon$ and therefore
$$
\norm{\Phi^*\lambda - \Phi^*\lambda'} \leq \norm{\lambda-\lambda'} \leq \int_{\bwt{M}} \abs{h-s}\circ\pp\,d\abs{\mu} = \int_{M\times M} \abs{h-s}\,d(\pp_\sharp\abs{\mu}) < \varepsilon
$$
where $d\lambda'=(s\circ\pp)\,d\mu$. Note that $\Phi^*\lambda'\in\lipfree{M}$ by the previous case and linearity. Therefore $\dist(\Phi^*\lambda,\lipfree{M})<\varepsilon$ and we conclude again that $\Phi^*\lambda\in\lipfree{M}$.

We finish by verifying the last assertion. Suppose that $\mu$ is optimal. If $h$ is positive and bounded, then $0\leq\lambda\leq c\cdot\mu$ for some $c<\infty$ and therefore $\lambda\in\opr{\bwt{M}}$ by \cite[Proposition 2.1]{APS1}. In particular, $\lambda$ is optimal when $h$ is a positive simple function. The result now follows from the previous paragraph, as $\opr{\bwt{M}}$ is norm-closed and we can choose $s\geq 0$ if $h\geq 0$.
\end{proof}

If we set $\nu=\pp_\sharp\abs{\mu} \in \meas{M\times M}$ and define a linear map $T:L_1(\nu)\to \meas{\bwt{M}}$ by $d(Th) = (h\circ\pp)\,d\mu$, then $T$ is clearly an isometry. Lemma \ref{lm:Aliaga_Borel} shows that the range of $T$ consists of representations of elements of $\lipfree{M}$; if moreover $\mu$ is optimal, then $Th$ is optimal whenever $h \geq 0$. Similar statements hold for $\nu_i=(\pp_i)_\sharp\abs{\mu}\in\meas{M}$ and $T_i:L_1(\nu_i)\to\meas{\bwt{M}}$ defined by $d(T_ih)=(h\circ\pp_i)\,d\mu$, $i=1,2$.

\medskip

We can finally prove the following proposition, which establishes a weak inner regularity-like property of elements of Lipschitz-free spaces and is instrumental for proving Theorem \ref{th:compact_decomposition}. It crucially relies on Theorem \ref{th:opt_conc}.

\begin{proposition}\label{pr:inner regularity}
Let $m \in \lipfree{M}$ and $\ep>0$. Then there exists a compact set $K \subset \supp(m)$ and an element $m' \in \lipfree{M}$ supported on $K$, such that $\norm{m}=\norm{m'} + \norm{m-m'}$ and $\norm{m-m'} < \ep$.
\end{proposition}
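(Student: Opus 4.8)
The plan is to manufacture $m'$ by truncating a carefully chosen De Leeuw representation of $m$. First I would use Theorem~\ref{th:opt_conc} to fix an optimal representation $\mu\in\opr{m}$ concentrated on $\pp^{-1}(A\times A)$, where $A=\supp(m)\cup\set{0}$. Since $\mu$ is a finite positive Radon measure on the compact space $\bwt{M}$, inner regularity lets us find an increasing sequence of compact sets $C_n\subset\pp^{-1}(A\times A)$ with $\mu(\bwt{M}\setminus C_n)\to 0$. Put $K_n=\pp_s(C_n)=\pp_1(C_n)\cup\pp_2(C_n)$; each $K_n$ is a compact subset of $A$ (a continuous image of a compact set), it is closed in $\beta M$, and $C_n\subset\pp^{-1}(K_n\times K_n)$.

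Next I would apply Lemma~\ref{lm:Aliaga_Borel} to the bounded Borel weight $h=\mathbf{1}_{K_n\times K_n}$ on $M\times M$: it gives that the measure $\lambda_n$ defined by $d\lambda_n=(\mathbf{1}_{K_n\times K_n}\circ\pp)\,d\mu$, which is just $\mu\restrict_{\pp^{-1}(K_n\times K_n)}$, represents an element $m_n:=\Phi^*\lambda_n\in\lipfree{M}$, and that $\lambda_n$ is optimal (as $\mu$ is optimal and $h\geq 0$). Since $0\leq\lambda_n\leq\mu$ and the optimal cone is closed downward, $\lambda_n$ and $\mu-\lambda_n$ are both optimal, whence $\norm{m}=\norm{\mu}=\norm{\lambda_n}+\norm{\mu-\lambda_n}=\norm{m_n}+\norm{m-m_n}$, while $\norm{m-m_n}\leq\norm{\mu-\lambda_n}=\mu(\bwt{M}\setminus\pp^{-1}(K_n\times K_n))\leq\mu(\bwt{M}\setminus C_n)\to 0$. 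Choosing $n$ so large that the last quantity is $<\ep$ and setting $m'=m_n$ takes care of both norm conditions.

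Finally I would control the support. The set $\pp^{-1}(K_n\times K_n)$ is closed in $\bwt{M}$ (as $K_n$ is closed in $\beta M$ and $\pp$ is continuous) and $\lambda_n$ is concentrated on it, so $\supp(\lambda_n)\subset\pp^{-1}(K_n\times K_n)$; hence $\supp(m_n)\subset\pp_s(\supp(\lambda_n))\subset K_n$ by \cite[Lemma~8]{Aliaga}. Thus $\supp(m_n)$ is a closed subset of the compact set $K_n$, hence compact, and is contained in $A=\supp(m)\cup\set{0}$. If $0\notin\supp(m_n)$ then $\supp(m_n)\subset\supp(m)$ outright; and if $0\in\supp(m_n)$, then because a support can never have the base point as an isolated point there is a sequence in $\supp(m_n)\setminus\set{0}\subset\supp(m)$ converging to $0$, which forces $0\in\supp(m)$ and hence again $\supp(m_n)\subset\supp(m)$. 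Taking $K=\supp(m_n)$ then completes the argument. I expect the one genuinely delicate point to be the claim that the truncation $\lambda_n$ still represents an element of $\lipfree{M}$, and not merely of $\Lip_0(M)^*$ — this is precisely what Lemma~\ref{lm:Aliaga_Borel} provides, and it is the reason $\mu$ must be chosen concentrated on $\pp^{-1}(M\times M)$ at the outset; the remaining care is just the base-point bookkeeping in the last step.
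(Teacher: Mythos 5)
Your proof is correct and follows essentially the same route as the paper's: fix via Theorem~\ref{th:opt_conc} an optimal representation concentrated on $\pp^{-1}(A\times A)$ with $A=\supp(m)\cup\set{0}$, restrict it to $\pp^{-1}(K\times K)$ where $K$ is the shadow of a compact set carrying nearly all the mass, invoke Lemma~\ref{lm:Aliaga_Borel} together with the downward closedness of $\opr{\bwt{M}}$ to get membership in $\lipfree{M}$ and the norm equality, and finish with the same base-point observation (that $0$ cannot be isolated in a support) to place $\supp(m')$ inside $\supp(m)$. The only differences (using a sequence of compacts rather than a single one, and taking $K=\supp(m')$ at the end) are cosmetic.
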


\begin{proof}
By Theorem \ref{th:opt_conc}, there exists $\mu\in\opr{m}$ concentrated on $\pp^{-1}(A \times A)$, where $A=\supp(m)\cup\{0\}$. By regularity of $\mu$, find a compact set $\mathcal{K}\subset\pp^{-1}(A\times A)$ with $\mu(\pp^{-1}(A\times A)\setminus\mathcal{K})<\varepsilon$. Then $K=\pp_s(\mathcal{K})$ is a compact subset of $A$ and $\mathcal{K}\subset\pp^{-1}(K\times K)\subset\pp^{-1}(A\times A)$. The measure $\lambda=\mu\restrict_{\pp^{-1}(K\times K)}$ is optimal, and can also be described by $d\lambda=(\mathbf{1}_{K\times K}\circ\pp)\,d\mu$. Then $m'=\Phi^*\lambda$ belongs to $\lipfree{M}$ by Lemma \ref{lm:Aliaga_Borel}. Since $0\leq\lambda\leq\mu$, we have $\mu,\lambda,\mu-\lambda \in \opr{\bwt{M}}$ and therefore
\[
\norm{m} = \norm{\mu} = \norm{\lambda} + \norm{\mu-\lambda} = \norm{m'} + \norm{m-m'}
\]
where
$$
\norm{m-m'} = \norm{\mu-\lambda} = \mu(\pp^{-1}(A\times A)\setminus \pp^{-1}(K\times K)) \leq \mu(\pp^{-1}(A\times A)\setminus \mathcal{K}) < \varepsilon .
$$
Finally, we have $\supp(m')\subset K$ by \cite[Lemma 8]{Aliaga}, but $0$ cannot be an isolated point in either $\supp(m)$ or $\supp(m')$, so we conclude $\supp(m')\subset\supp(m)$.
\end{proof}

We finish this section by providing an alternative statement for part of the argument in Lemma \ref{lm:Aliaga_Borel} and Proposition \ref{pr:inner regularity}, expressed in terms of representations concentrated on the diagonal of $\bwt{M}$. To that end, we introduce the notation
\begin{equation}\label{eq:Delta notation}
\Delta(E) = \set{ \zeta\in\bwt{M} \,:\, \pp(\zeta)=(x,x) \text{ for some } x\in E }  = \pp^{-1}(E \times E) \cap d^{-1}(0)
\end{equation}
for subsets $E\subset M$. This notation will be used in the later sections as well. Note that $\pp^{-1}(M\times M)=\wt{M}\cup\Delta(M)$, so any De Leeuw representation concentrated on $\pp^{-1}(M\times M)$ can be decomposed into one concentrated on the diagonal $\Delta(M)$ and another one concentrated on $\wt{M}$. The latter always corresponds to an element of $\lipfree{M}$ by \cite[Proposition 2.6]{APS1}.

\begin{corollary}
Suppose that $\mu\in\meas{\bwt{M}}$ is concentrated on $\Delta(M)$ and $\Phi^*\mu\in\lipfree{M}$. Then $\Phi^*(\mu\restrict_{\Delta(E)})\in\lipfree{M}$ for every Borel $E\subset M$.
\end{corollary}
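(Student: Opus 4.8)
The plan is to deduce the statement directly from Lemma \ref{lm:Aliaga_Borel}, applied with the weighting function $h=\mathbf{1}_{E\times E}$. First I would note that the hypotheses of that lemma are satisfied: since $\Delta(M)=\pp^{-1}(M\times M)\cap d^{-1}(0)\subset\pp^{-1}(M\times M)$ by \eqref{eq:Delta notation}, the assumption that $\mu$ is concentrated on $\Delta(M)$ implies in particular that $\mu$ is concentrated on $\pp^{-1}(M\times M)$, and $\mu$ is a representation of an element of $\lipfree{M}$ because $\Phi^*\mu\in\lipfree{M}$ is given.

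Next, fix a Borel set $E\subset M$ and consider $h=\mathbf{1}_{E\times E}\colon M\times M\to\RR$, which is a bounded Borel function, so that $\int_{\bwt{M}}\abs{h}\circ\pp\,d\abs{\mu}\leq\norm{\mu}<\infty$. Lemma \ref{lm:Aliaga_Borel} then yields that the measure $\lambda$ defined by $d\lambda=(h\circ\pp)\,d\mu=\mathbf{1}_{\pp^{-1}(E\times E)}\,d\mu$ represents an element of $\lipfree{M}$; in other words, $\lambda=\mu\restrict_{\pp^{-1}(E\times E)}$ and $\Phi^*\lambda\in\lipfree{M}$. (Note $\pp^{-1}(E\times E)$ is Borel since $\pp$ is continuous, and $\Delta(E)=\pp^{-1}(E\times E)\cap d^{-1}(0)$ is Borel since $d$ is continuous, so these restrictions make sense.)

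Finally I would identify $\lambda$ with $\mu\restrict_{\Delta(E)}$. Because $E\subset M$, one has $\pp^{-1}(E\times E)\cap\Delta(M)=\pp^{-1}(E\times E)\cap\pp^{-1}(M\times M)\cap d^{-1}(0)=\pp^{-1}(E\times E)\cap d^{-1}(0)=\Delta(E)$. Hence $\pp^{-1}(E\times E)\setminus\Delta(E)\subset\bwt{M}\setminus\Delta(M)$ is $\mu$-null while $\Delta(E)\subset\pp^{-1}(E\times E)$, so $\mu\restrict_{\pp^{-1}(E\times E)}=\mu\restrict_{\Delta(E)}$, and therefore $\Phi^*(\mu\restrict_{\Delta(E)})=\Phi^*\lambda\in\lipfree{M}$. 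There is no genuine obstacle here; the only point that requires a little care is this last identification of the two restrictions, which uses that $\mu$ lives on the long diagonal $\Delta(M)$.
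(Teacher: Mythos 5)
Your proof is correct and is essentially the paper's argument: both amount to a direct application of Lemma \ref{lm:Aliaga_Borel} with a Borel indicator weight, followed by the observation that concentration on $\Delta(M)$ identifies the weighted measure with $\mu\restrict_{\Delta(E)}$. The only cosmetic difference is that the paper takes $h=\mathbf{1}_E\circ\pp_1$ instead of your $\mathbf{1}_{E\times E}\circ\pp$, which coincide $\mu$-almost everywhere since $\mu$ lives on the diagonal.
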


\begin{proof}
Since $\Delta(M)\subset\pp^{-1}(M\times M)$, we can apply Lemma \ref{lm:Aliaga_Borel} to show that $\lambda=\mu\restrict_{\Delta(E)}$, which satisfies $d\lambda = (\mathbf{1}_E\circ\pp_1)\,d\mu$, represents an element of $\lipfree{M}$.
\end{proof}

\section{The extreme point problem}
\label{sec:extreme}

We shall now use Theorem \ref{th:compact_decomposition} to obtain a characterisation of the extreme points of the unit ball of $\lipfree{M}$ for arbitrary $M$. This problem dates back to the pioneering work of Weaver in the 1990s. Since then, the primary suspects have been the \emph{(elementary) molecules}, that is, elements of $\lipfree{M}$ of the form
$$
m_{xy} = \frac{\delta(x)-\delta(y)}{d(x,y)}
$$
for $(x,y)\in\wt{M}$. We denote the set of all elementary molecules in $\lipfree{M}$ by $\Mol(M)$. Note that molecules are the canonical norming elements for $\Lip_0(M)$, so $B_{\lipfree{M}}=\cl{\conv}\,\Mol(M)$ by the Hahn-Banach theorem; hence the suspicion. Not all molecules are extreme points: for any choice of different points $x,p,y$ in $M$ we have
$$
m_{xy} = \frac{d(x,p)}{d(x,y)}m_{xp} + \frac{d(p,y)}{d(x,y)}m_{py}
$$
so $m_{xy}$ is not extreme if there exists $p\neq x,y$ such that $d(x,p)+d(p,y)=d(x,y)$. The first two named authors proved that the converse implication also holds for complete $M$ \cite{AP_rmi}; this characterises extreme molecules completely.

The problem is thus determining whether all extreme points must be molecules. This was first asked explicitly in \cite[Question 2]{AliagaGuirao}, \cite[p. 2]{GPPR}, and \cite[p. 102]{Weaver2} at around the same time, but is implicit in Weaver's early work such as \cite{Weaver95} or the first edition of \cite{Weaver2}, where he used De Leeuw representations to prove that \emph{preserved} extreme points of $B_{\lipfree{M}}$ (meaning that they are also extreme in $B_{\lipfree{M}^{**}}$) must be molecules \cite[Corollary 3.44]{Weaver2}, and that all extreme points are molecules when $M$ is compact and satisfies an additional condition related to $\lipfree{M}$ being a dual Banach space (see \cite[Corollary 4.41]{Weaver2}).

These first results, and their approach based on De Leeuw representations, have had a profound impact on subsequent developments. Recent work has significantly expanded the list of known cases where $\ext B_{\lipfree{M}} \subset \Mol(M)$ to include all compact spaces \cite{Aliaga}, uniformly discrete spaces \cite{APS1}, and subsets of $\RR$-trees \cite{APP}. Stronger versions of extreme points have also been completely characterised for all $M$, including preserved extreme points \cite{AliagaGuirao}, denting points \cite{GPPR}, and strongly exposed points \cite{GPAZ}.

Theorem \ref{th:compact_decomposition} now allows us to reduce the extreme point problem to the compact case and provide a definitive answer.

\begin{theorem}\label{th:extreme}
Let $M$ be a complete metric space. Then $\ext B_{\lipfree{M}} \subset \Mol(M)$. Specifically, the extreme points of $B_{\lipfree{M}}$ are exactly the molecules $m_{xy}$ with $x\neq y\in M$ such that $d(x,p)+d(p,y)>d(x,y)$ for every $p\in M\setminus\set{x,y}$.
\end{theorem}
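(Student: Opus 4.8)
The plan is to derive Theorem \ref{th:extreme} from the compact decomposition principle (Theorem \ref{th:compact_decomposition}) together with the already-known compact case of the extreme point problem \cite{Aliaga}. We may clearly assume $M$ has more than one point. First I would record the standard fact that any $m\in\ext B_{\lipfree{M}}$ has $\norm{m}=1$. The heart of the first step is the claim that \emph{the support of an extreme point is compact}. To see this, apply Theorem \ref{th:compact_decomposition} to write $m=\sum_n m_n$ with $\norm{m}=\sum_n\norm{m_n}=1$, each $\supp(m_n)$ compact and $\supp(m_n)\subset\supp(m)$. Since $\norm{m}=1$, not all terms vanish; pick any nonzero $m_{n_0}$ and set $\lambda=\norm{m_{n_0}}\in(0,1]$. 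If $\lambda=1$ then $m=m_{n_0}$; if $\lambda<1$, write $m=\lambda\,(m_{n_0}/\lambda)+(1-\lambda)\,y$ with $y=(1-\lambda)^{-1}\sum_{n\neq n_0}m_n$, and note $\norm{y}\leq(1-\lambda)^{-1}\sum_{n\neq n_0}\norm{m_n}=1$ while $\norm{m_{n_0}/\lambda}=1$, so extremality of $m$ forces $m_{n_0}/\lambda=m$. Either way $m$ is a positive scalar multiple of $m_{n_0}$, hence $\supp(m)=\supp(m_{n_0})$ is compact.

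Second, I would pass to the support. Put $N=\supp(m)\cup\set{0}$, a compact metric space, and recall from the preliminaries that $\lipfree{N}$ embeds isometrically into $\lipfree{M}$ via McShane extension, sending each evaluation functional to the corresponding evaluation functional; moreover $m\in\lipfree{N}$ by definition of the support. Since this embedding is isometric, $B_{\lipfree{N}}\subset B_{\lipfree{M}}$, so any representation $m=\tfrac12(a+b)$ with $a,b\in B_{\lipfree{N}}$ is also such a representation inside $B_{\lipfree{M}}$; extremality of $m$ there yields $a=b=m$. Hence $m\in\ext B_{\lipfree{N}}$.

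Third, since $N$ is compact, the main result of \cite{Aliaga} gives $\ext B_{\lipfree{N}}\subset\Mol(N)$, so $m=m_{xy}$ for some $x\neq y\in N$. Under the isometric identification, the molecule $m_{xy}$ computed in $\lipfree{N}$ coincides with $m_{xy}=(\delta(x)-\delta(y))/d(x,y)$ in $\lipfree{M}$, so $m\in\Mol(M)$; this establishes $\ext B_{\lipfree{M}}\subset\Mol(M)$. For the explicit description it then remains to identify which molecules are extreme: the displayed identity $m_{xy}=\frac{d(x,p)}{d(x,y)}m_{xp}+\frac{d(p,y)}{d(x,y)}m_{py}$ shows $m_{xy}\notin\ext B_{\lipfree{M}}$ whenever some $p\in M\setminus\set{x,y}$ satisfies $d(x,p)+d(p,y)=d(x,y)$, while the converse implication for complete $M$ is precisely \cite{AP_rmi}. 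Combining the two gives the stated characterisation.

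I would stress that there is no genuine obstacle here beyond Theorem \ref{th:compact_decomposition} itself: the real difficulty has been absorbed into the Choquet machinery of \cite{Smith24} that underpins it. The only point requiring a little care is that the decomposition in Theorem \ref{th:compact_decomposition} is a convex \emph{series} rather than a finite convex combination, so one must argue (as above) that extremality collapses it to a single scaled term — and this is exactly where the equality $\norm{m}=\sum_n\norm{m_n}$ is used, not merely the inequality $\norm{m}\leq\sum_n\norm{m_n}$.
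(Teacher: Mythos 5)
Your proof is correct and follows essentially the same route as the paper: apply Theorem \ref{th:compact_decomposition}, use extremality to collapse the convex series and conclude that $\supp(m)$ is compact, then invoke the known compact case from \cite{Aliaga} together with the characterisation of extreme molecules from \cite{AP_rmi}. The only cosmetic difference is that the paper cites \cite[Corollary 12]{Aliaga} directly for extreme points with compact support, whereas you spell out the reduction to $\lipfree{\supp(m)\cup\set{0}}$ explicitly; both steps are fine.
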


\begin{proof}
Let $m\in\ext B_{\lipfree{M}}$, and find an expression $m=\sum_n m_n$ as in \eqref{eq:compact_decomposition}. By convexity, we have $m=m_n/\norm{m_n}$ for all $n$ such that $m_n\neq 0$. Since there must be at least one such $n$, $m$ has compact support. The result now follows from \cite[Corollary 12]{Aliaga} and \cite[Theorem 1.1]{AP_rmi}.
\end{proof}

\begin{remark}
It is also possible to bypass Theorem \ref{th:compact_decomposition} and prove Theorem \ref{th:extreme} directly from Theorem \ref{th:opt_conc} and Lemma \ref{lm:Aliaga_variant} by a similar argument to that used in \cite{Aliaga} for the proper case. 
Indeed, in the proof of \cite[Theorem 1]{Aliaga}, given $m \in \ext B_{\lipfree{M}}$, we make use of a representation $\mu \in \opr{\bwt{M}}$ of $m$ that is concentrated on $\pp^{-1}(M \times M)$,  for which we can fix $\zeta \in \supp(\mu) \cap \pp^{-1}(M \times M)$ by inner regularity. By weighting the measure $\mu$ we then conclude that $m$ is supported in $\{\pp_1(\zeta),\pp_2(\zeta)\}\subset M$, and hence $m$ is a molecule. If $M$ is proper then every optimal representation of $m$ is concentrated on $\pp^{-1}(M \times M)$ \cite[Proposition 7]{Aliaga}. Without the assumption that $M$ is proper, we can use Theorem \ref{th:opt_conc} to find some such $\mu$. Once such a $\mu$ has been fixed, the proof of \cite[Theorem 1]{Aliaga} can be followed exactly as it is written, except that we use Lemma \ref{lm:Aliaga_variant} whenever the original proof requires \cite[Lemma 11]{Aliaga}.
\end{remark}

Let us spell out some consequences of Theorem \ref{th:extreme}.
Recall that $x \in B_X$ is an \emph{exposed point} of the unit ball of the Banach space $X$ if there exists $f \in S_{X^*}$ such that $\duality{f,x} =1 > \duality{f,y}$ whenever $y \in B_X\setminus\{x\}$; any exposed point of $B_X$ is clearly extreme. It is shown in \cite[Theorem 3.2]{APPP_2020} that all extreme molecules are in fact exposed points of $B_{\lipfree{M}}$, so the following is immediate.

\begin{corollary}\label{cr:exposed}
Every extreme point of $B_{\lipfree{M}}$ is also an exposed point.
\end{corollary}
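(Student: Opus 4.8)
The plan is to chain together two facts that are already available. First, Theorem~\ref{th:extreme} tells us that every extreme point $m$ of $B_{\lipfree{M}}$ is an elementary molecule $m_{xy}$ satisfying the three-point condition $d(x,p)+d(p,y)>d(x,y)$ for all $p\in M\setminus\set{x,y}$. Second, the cited result \cite[Theorem 3.2]{APPP_2020} asserts that every molecule satisfying exactly this condition is an exposed point of $B_{\lipfree{M}}$. So the entire argument is a one-line composition: apply Theorem~\ref{th:extreme} to identify $m$ as such a molecule, then invoke \cite[Theorem 3.2]{APPP_2020} to conclude that $m$ is exposed.

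The only point worth checking is that the hypotheses match up verbatim, i.e. that the class of molecules characterised as extreme in Theorem~\ref{th:extreme} is literally the same class shown to be exposed in \cite[Theorem 3.2]{APPP_2020}. Both are phrased via the strict triangle-inequality (``no metric midpoint'') condition, so this is immediate; there is no gap to bridge. One should just make sure, in writing the proof, to state that completeness of $M$ is what makes Theorem~\ref{th:extreme} applicable, and that \cite[Theorem 3.2]{APPP_2020} applies under the same standing assumptions. I do not anticipate any genuine obstacle here — this corollary is a formal consequence and the ``hard part'' was entirely absorbed into the proof of Theorem~\ref{th:extreme}.

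Concretely, I would write: let $m\in\ext B_{\lipfree{M}}$. By Theorem~\ref{th:extreme}, $m=m_{xy}$ for some $x\neq y\in M$ with $d(x,p)+d(p,y)>d(x,y)$ for every $p\in M\setminus\set{x,y}$. By \cite[Theorem 3.2]{APPP_2020}, any molecule with this property is an exposed point of $B_{\lipfree{M}}$. Hence $m$ is exposed, as claimed. Since the converse (exposed $\Rightarrow$ extreme) is trivial, one could also remark that the exposed points of $B_{\lipfree{M}}$ coincide with its extreme points, though the statement as given only needs the forward direction.
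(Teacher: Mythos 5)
Your argument is correct and coincides with the paper's: the corollary follows by combining Theorem~\ref{th:extreme} (every extreme point is a molecule satisfying the strict triangle condition) with \cite[Theorem 3.2]{APPP_2020} (such molecules are exposed). Nothing further is needed.
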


Another consequence is a functional characterisation of geodesic metric spaces. Recall that $M$ is called \emph{geodesic} if, given $x,y \in M$, there exists an isometric embedding $\gamma$ of $[0,d(x,y)] \subset \RR$ into $M$, such that $\gamma(0)=x$ and $\gamma(d(x,y))=y$. Given that $M$ is assumed to be complete, this is equivalent to the property that for every $(x,y) \in \wt{M}$ there exists $p\in M\setminus\set{x,y}$ such that $d(x,p)+d(p,y)=d(x,y)$ \cite[Proposition 4.1]{GPAZ}. Thus we obtain the following result straightaway.

\begin{corollary}\label{cr:geodesic}
If $M$ is complete, then $\ext B_{\lipfree{M}}=\varnothing$ if and only if $M$ is geodesic.
\end{corollary}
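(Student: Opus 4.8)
The plan is to read this equivalence off directly from Theorem~\ref{th:extreme} together with the metric characterisation of geodesic spaces recalled just above, namely \cite[Proposition 4.1]{GPAZ}: a complete metric space $M$ is geodesic precisely when, for every $(x,y)\in\wt{M}$, there is some $p\in M\setminus\set{x,y}$ with $d(x,p)+d(p,y)=d(x,y)$. On the other hand, Theorem~\ref{th:extreme} says that $\ext B_{\lipfree{M}}$ consists exactly of the molecules $m_{xy}$ for which \emph{no} such midpoint-like point $p$ exists. These two statements fit together at once: $\ext B_{\lipfree{M}}=\varnothing$ if and only if the latter condition fails for every $(x,y)\in\wt{M}$, which is the same as the former condition holding for every $(x,y)\in\wt{M}$, i.e. $M$ being geodesic.

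To make this precise I would argue the two implications in turn. If $M$ is geodesic, fix an arbitrary $(x,y)\in\wt{M}$ and use \cite[Proposition 4.1]{GPAZ} (or, directly, take $p=\gamma(d(x,y)/2)$ for a geodesic segment $\gamma$ joining $x$ and $y$) to obtain $p\in M\setminus\set{x,y}$ with $d(x,p)+d(p,y)=d(x,y)$. Then
$$
m_{xy} = \frac{d(x,p)}{d(x,y)}\,m_{xp} + \frac{d(p,y)}{d(x,y)}\,m_{py}
$$
expresses $m_{xy}$ as a proper convex combination of two distinct points of $B_{\lipfree{M}}$, so $m_{xy}$ is not extreme; since by Theorem~\ref{th:extreme} every extreme point of $B_{\lipfree{M}}$ is a molecule, it follows that $\ext B_{\lipfree{M}}=\varnothing$. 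Conversely, if $M$ is not geodesic, then \cite[Proposition 4.1]{GPAZ} provides some $(x,y)\in\wt{M}$ with $d(x,p)+d(p,y)>d(x,y)$ for every $p\in M\setminus\set{x,y}$, and then Theorem~\ref{th:extreme} guarantees that $m_{xy}\in\ext B_{\lipfree{M}}$, so $\ext B_{\lipfree{M}}\neq\varnothing$.

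I do not expect any genuine difficulty here: all the substance is already packaged in Theorem~\ref{th:extreme} and \cite[Proposition 4.1]{GPAZ}, and the argument is purely a matter of assembling them. The only borderline point is the degenerate case where $M$ reduces to its base point, so that $\wt{M}=\varnothing$ and $\lipfree{M}=\set{0}$; this case is understood to be excluded throughout.
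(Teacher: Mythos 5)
Your proof is correct and follows essentially the same route as the paper: the corollary is read off directly from Theorem~\ref{th:extreme} together with the characterisation of complete geodesic spaces in \cite[Proposition 4.1]{GPAZ}, exactly as the paper does "straightaway". Your explicit handling of the two implications and the degenerate one-point case is fine but adds nothing beyond the paper's intended argument.
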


Theorem \ref{th:extreme} also has some implications for surjective linear isometries between Lipschitz-free spaces. We refer the reader to \cite{AFGLZ_20,CDT_24} and \cite[Section 3.8]{Weaver2} for more information on this emerging topic. Note that any surjective linear isometry from a Banach space $X$ onto a Banach space $Y$ maps $\ext B_X$ onto $\ext B_Y$, so Theorem \ref{th:extreme} ought to yield some information about them. We consider two extreme cases. First, if $\ext B_X$ is empty then $\ext B_Y$ must be empty as well. Thus Corollary \ref{cr:geodesic} yields:

\begin{corollary}\label{cr:isometries}
Let $M$ and $M'$ be complete metric spaces such that $\lipfree{M}$ and $\lipfree{M'}$ are linearly isometric. If $M$ is geodesic, then so is $M'$.
\end{corollary}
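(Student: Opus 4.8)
The plan is to deduce Corollary \ref{cr:isometries} from Corollary \ref{cr:geodesic} in the obvious way, the only point requiring care being the interplay between the metric space and its Lipschitz-free space via linear isometries. First I would invoke the standard fact that any surjective linear isometry $T:\lipfree{M}\to\lipfree{M'}$ maps $\ext B_{\lipfree{M}}$ bijectively onto $\ext B_{\lipfree{M'}}$, since extreme points are a purely metric/linear invariant of the unit ball. Consequently, $\ext B_{\lipfree{M}}=\varnothing$ if and only if $\ext B_{\lipfree{M'}}=\varnothing$.

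Next I would apply Corollary \ref{cr:geodesic} in both directions. Since $M$ is complete and geodesic, that corollary gives $\ext B_{\lipfree{M}}=\varnothing$; by the isometry this forces $\ext B_{\lipfree{M'}}=\varnothing$; and since $M'$ is also assumed complete, the converse implication in Corollary \ref{cr:geodesic} yields that $M'$ is geodesic. This is essentially the whole argument, and it is so short that it could reasonably be folded into the paragraph preceding the corollary rather than given a displayed proof.

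I do not anticipate any genuine obstacle here; the only thing to watch is that Corollary \ref{cr:geodesic} requires completeness of the underlying space, which is exactly why completeness of \emph{both} $M$ and $M'$ appears in the hypothesis — completeness of $M$ to get emptiness of $\ext B_{\lipfree{M}}$ from geodesicity, and completeness of $M'$ to get geodesicity of $M'$ back from emptiness of $\ext B_{\lipfree{M'}}$. With that noted, the proof writes itself:

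\begin{proof}
Any surjective linear isometry between Banach spaces carries the set of extreme points of one unit ball onto that of the other. Hence $\ext B_{\lipfree{M}}=\varnothing$ if and only if $\ext B_{\lipfree{M'}}=\varnothing$. Since $M$ is complete and geodesic, Corollary \ref{cr:geodesic} gives $\ext B_{\lipfree{M}}=\varnothing$, so $\ext B_{\lipfree{M'}}=\varnothing$ as well. As $M'$ is complete, the converse direction of Corollary \ref{cr:geodesic} shows that $M'$ is geodesic.
\end{proof}
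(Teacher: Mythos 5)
Your argument is correct and is precisely the paper's own: the paper folds it into the paragraph preceding the corollary, noting that surjective linear isometries map extreme points of one unit ball onto those of the other and then invoking Corollary \ref{cr:geodesic} in both directions.
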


The opposite situation occurs when $\ext B_{\lipfree{M}}$ is as big as possible, that is, when all molecules are extreme points. This happens precisely when $M$ is \emph{concave}, meaning that the triangle inequality is always strict for triples of different points in $M$. In that case, we can obtain a Lipschitz-free version of the classical Banach-Stone theorem. So that we can state the result properly, we review three types of transformations on $M$ that induce trivial isometries on its Lipschitz-free space:
\begin{itemize}
\item Scaling the metric with a constant multiplicative factor $c>0$ yields an isometry of Lipschitz-free spaces $S_c:\lipfree{M,d}\to\lipfree{M,c\cdot d}$ given by $S_c(m)=\frac{1}{c}m$, $m\in\lipfree{M,d}$.
\item If $M'$ is another metric space and $\pi:M\to M'$ is a base point-preserving, surjective isometry, then its linearisation $\widehat{\pi}:\lipfree{M}\to\lipfree{M'}$ is a surjective linear isometry.
\item Choosing an alternative base point $0' \in M$ also induces an isometry $J_{0'}:\lipfree{M,0'}\to\lipfree{M,0}$ given by $\duality{f,J_{0'}m}=\duality{f-f(0')\mathbf{1}_M,m}$, where $m \in \lipfree{M,0'}$ and $f \in \Lip_0(M)$.
\end{itemize}
All three can be combined as follows. We call a surjective map $\pi:(M,d)\to (M',d')$ between metric spaces a \emph{$c$-dilation} (where $c>0$) if $d'(\pi(x),\pi(y))=c\cdot d(x,y)$ for all $x,y \in M$. Then any such map induces a linear isometry $T:\lipfree{M,d,0}\to\lipfree{M',d',0'}$ given by $T=J_{\pi(0)}\circ\widehat{\pi}\circ S_c$. For concave metric spaces, these trivial isometries are the only possible ones.

\begin{theorem}\label{th:banach-stone}
Let $M$ and $M'$ be complete, concave metric spaces and let $T:\lipfree{M}\to\lipfree{M'}$ be a surjective linear isometry. Then there exists a surjective $c$-dilation $\pi:M\to M'$, such that $T=\pm\frac{1}{c}J_{\pi(0)}\circ\widehat{\pi}$.
\end{theorem}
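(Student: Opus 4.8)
\emph{The plan.} I would exploit that a surjective linear isometry carries $\ext B_{\lipfree M}$ bijectively onto $\ext B_{\lipfree{M'}}$. Since $M$ and $M'$ are concave, \emph{every} molecule is extreme, so by Theorem~\ref{th:extreme} we have $\ext B_{\lipfree M}=\Mol(M)$ and $\ext B_{\lipfree{M'}}=\Mol(M')$; hence $T$ restricts to a bijection $\Mol(M)\to\Mol(M')$. As $(x,y)\mapsto m_{xy}$ is a bijection of $\wt M$ onto $\Mol(M)$ (injectivity from linear independence of $\delta(M)$), this yields a bijection $\phi\colon\wt M\to\wt{M'}$ with $Tm_{xy}=m_{\phi(x,y)}$ and $\phi(y,x)=\phi(x,y)^{\mathrm{swap}}$. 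Set $F=T\circ\delta\colon M\to\lipfree{M'}$. Then $F(0)=0$, $F$ is an isometric embedding, and for $x\ne y$,
$$F(x)-F(y)=T(\delta(x)-\delta(y))=d(x,y)\,m_{\phi(x,y)};$$
in particular each $F(x)=d(x,0)\,m_{\phi(x,0)}$ ($x\ne 0$) is a positive multiple of a molecule. The goal is to prove that $F(x)=\pm\tfrac1c\bigl(\delta(\pi(x))-\delta(\pi(0))\bigr)$ for some surjective $c$-dilation $\pi\colon M\to M'$, whence the claim follows by unwinding the definitions of $S_c$, $\widehat\pi$ and $J_{\pi(0)}$.

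The combinatorial engine is a rigidity property of molecules, obtained by expanding everything into evaluation functionals and using linear independence of $\delta(M')$: \emph{if $m_{ef}=\lambda m_{ab}+\mu m_{cd}$ in some $\lipfree N$ with $\lambda,\mu\ne0$ and $\{a,b\}\ne\{c,d\}$, then $\{a,b\}$ and $\{c,d\}$ meet in a single point $p$, the remaining points $q\in\{a,b\}$ and $r\in\{c,d\}$ are distinct, and $\{e,f\}=\{q,r\}$.} (One checks that $\{a,b\},\{c,d\}$ cannot be disjoint --- otherwise four linearly independent evaluation functionals would have to be supported on the two points $e,f$ --- nor overlap in two points; the argument adapts routinely when $0_N$ occurs among $a,b,c,d,e,f$, using $\delta(0_N)=0$.) Applying this to $d(x,z)m_{\phi(x,z)}=d(x,y)m_{\phi(x,y)}+d(y,z)m_{\phi(y,z)}$ for distinct $x,y,z\in M$ --- the point-pairs of $\phi(x,y)$ and $\phi(y,z)$ being distinct because $\phi$ is injective and respects swaps --- shows: \emph{the point-pairs of $\phi(x,y)$ and of $\phi(y,z)$ meet in exactly one point, and the point-pair of $\phi(x,z)$ consists of their two non-shared points.}

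Specialising to triples $(x,0,y)$ with $x\ne y$ in $M\setminus\{0\}$, the $2$-element sets $P_x$ (the point-pair of $\phi(x,0)$) meet pairwise in exactly one point and are pairwise distinct. A family of distinct $2$-sets with this property is, by an elementary dichotomy, either a pencil (all through one common point) or --- if it has exactly three members --- the edge set of a triangle on a $3$-point set; the triangle alternative, in which $M$ has precisely four points, is excluded by a short computation using that $F$ is an isometry (or a direct check), and the remaining low-dimensional cases ($\abs M\le 2$) are trivial. So there is a point $O'\in M'$ lying on every $P_x$; put $\pi(x):=$ the other point of $P_x$ for $x\ne0$ and $\pi(0):=O'$, making $\pi$ injective with $\pi(M\setminus\{0\})\subseteq M'\setminus\{O'\}$. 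Feeding this back: for distinct $x,y\in M\setminus\{0\}$ the point-pair of $\phi(x,y)$ is $\{\pi(x),\pi(y)\}$, while $F(x)$ is a scalar multiple of $\delta(\pi(x))-\delta(O')$, so demanding that $F(x)-F(y)$ be a multiple of $\delta(\pi(x))-\delta(\pi(y))$ forces all these scalars to share a common sign $\epsilon\in\{-1,1\}$ and forces the ratios $d(x,0)/d'(\pi(x),O')$ to equal one constant $k>0$; hence (with a little extra care in the configurations where $\pi(x)=0_{M'}$)
$$F(x)=\epsilon\,k\,\bigl(\delta(\pi(x))-\delta(\pi(0))\bigr)\qquad(x\in M).$$
Taking norms in $F(x)-F(y)=\epsilon k(\delta(\pi(x))-\delta(\pi(y)))$ shows $\pi$ is a $c$-dilation with $c=1/k$. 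Finally $\pi$ is onto: given $z'\in M'\setminus\{O'\}$, surjectivity of $\phi$ gives $(x,y)\in\wt M$ with $\phi(x,y)=(z',O')$, and comparing the supports of the two expressions for $F(x)-F(y)$ forces one of $x,y$ to be the base point and then $z'=\pi(x)$. This finishes the proof.

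The main obstacle is the step producing the common point $O'$: it is the one place where the purely linear rigidity of De Leeuw-type relations must be combined with the metric content that $F$ is an isometry, and where the exceptional small spaces must be dispatched. A secondary but pervasive nuisance is bookkeeping around the base point $0_{M'}$, whose evaluation functional is $0$ and hence escapes the linear-independence arguments, so several ``generic'' identities require a parallel treatment in the configurations where one of the relevant points coincides with $0_{M'}$.
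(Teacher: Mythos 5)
Your proof is correct and its opening move is exactly the paper's: by concavity and Theorem \ref{th:extreme}, $\ext B_{\lipfree{M}}=\Mol(M)$ and $\ext B_{\lipfree{M'}}=\Mol(M')$, so the surjective isometry $T$ maps molecules bijectively onto molecules. At that point the paper stops and invokes Weaver's result (\cite[Lemma 3.54 and Theorem 3.55]{Weaver2}), which is precisely the statement that a surjective isometry with $T(\Mol(M))=\Mol(M')$ must have the form $\pm\frac{1}{c}J_{\pi(0)}\circ\widehat{\pi}$; everything in your proposal after the first paragraph is a self-contained re-derivation of that cited proposition. Your derivation --- the linear-independence rigidity of relations $m_{ef}=\lambda m_{ab}+\mu m_{cd}$, the pencil-versus-triangle dichotomy for the pairs $P_x$ (where the triangle case already dies from injectivity of $\phi$ on swapped pairs, with no metric input needed), and the matching of signs and scale factors via the vanishing of the $\delta(O')$ coefficient --- checks out, including the base-point bookkeeping you flag; it costs length but buys independence from the citation to \cite{Weaver2}.
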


In particular, $M$ and $M'$ are isometric up to a constant multiplicative factor. The theorem fails if we remove the assumption of concavity, as e.g. $\lipfree{M}$ is isometric to $\ell_1$ whenever $M\subset\RR$ is closed and Lebesgue null \cite{Godard}.

\begin{proof}[Sketch of proof] The proof of Theorem \ref{th:banach-stone} follows Weaver's arguments in \cite[Section 3.8]{Weaver2}, where he derives the conclusion from a stronger hypothesis of uniform concavity. The crux of his argument is the following fact, which we state independently.

\begin{proposition}[Weaver; cf. {\cite[Lemma 3.54 and Theorem 3.55]{Weaver2}}]
Let $M$ and $M'$ be complete metric spaces, and suppose that there exists a surjective linear isometry $T:\lipfree{M}\to\lipfree{M'}$ such that $T(\Mol(M))=\Mol(M')$. Then there exists a surjective $c$-dilation $\pi:M\to M'$, such that $T=\pm\frac{1}{c}J_{\pi(0)}\circ\widehat{\pi}$.
\end{proposition}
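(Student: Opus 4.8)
The plan is to recover the dilation $\pi$ from the way $T$ permutes the molecules, using at every turn that the evaluation functionals $\{\delta(x):x\in M\}$ --- and their counterparts on $M'$ --- are linearly independent. First I would observe that, since $T$ is a linear bijection with $T(\Mol(M))=\Mol(M')$ and since linear independence of the $\delta(x)$ forces each molecule to determine its pair of points uniquely, there is a bijection $\phi\colon\wt{M}\to\wt{M'}$ characterised by $Tm_{xy}=m_{\phi(x,y)}$; applying $T$ to $m_{yx}=-m_{xy}$ shows that $\phi(y,x)$ is $\phi(x,y)$ with its two coordinates interchanged, so $\phi$ descends to a bijection $\{x,y\}\mapsto\{x',y'\}$ between the two-element subsets of $M$ and those of $M'$.

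The core of the proof --- essentially Weaver's \cite[Lemma~3.54]{Weaver2} --- is then to promote $\phi$ to a bijection $\pi\colon M\to M'$ with $\phi(x,y)\in\{(\pi(x),\pi(y)),(\pi(y),\pi(x))\}$ for every $(x,y)\in\wt{M}$. The key point is that incidence of two molecules is detectable from the linear structure alone. On the one hand, for distinct $x,y,z$ the telescoping identity $d(x,y)\,m_{xy}+d(y,z)\,m_{yz}=d(x,z)\,m_{xz}$ --- which is nothing but $\delta(x)-\delta(y)+\delta(y)-\delta(z)=\delta(x)-\delta(z)$ --- shows that the linear span of two molecules sharing a point always contains a third molecule. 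On the other hand, a short case analysis using linear independence of $\{\delta(w)\}$, needing only a little extra care when the base point (whose evaluation functional vanishes) is among the points involved, shows that the span of two non-parallel molecules whose point-pairs are disjoint contains no molecule other than $\pm$ the two given ones. Thus ``the point-pairs of $m$ and $m'$ meet'' is an intrinsic relation, hence transported by $\phi$; equivalently, $\phi$ carries the intersection graph of the two-element subsets of $M$ onto that of $M'$. Since the maximal families of pairwise-intersecting two-element subsets of a set are exactly the stars $\{S:x\in S\}$ and the three-element ``triangles'', when $|M|\geq5$ the points of $M$ are recovered as the maximal intersecting families of size at least $4$, which produces $\pi$; the finitely many metric spaces with at most four points would be handled directly. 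Surjectivity of $\pi$ follows from that of $T$.

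It then remains to establish rigidity of the two remaining parameters. Writing $T(\delta(x)-\delta(y))=\varepsilon(x,y)\,\frac{d(x,y)}{d'(\pi(x),\pi(y))}\,(\delta(\pi(x))-\delta(\pi(y)))$, where $\varepsilon(x,y)\in\{\pm1\}$ records which of the two possibilities for $\phi(x,y)$ occurs, I would apply $T$ to the telescoping identity for distinct $x,y,z$ and compare the coefficients of the linearly independent vectors $\delta(\pi(x)),\delta(\pi(y)),\delta(\pi(z))$ (with a harmless split according to whether one of these points is the base point). This forces $\varepsilon(x,y)=\varepsilon(y,z)=\varepsilon(x,z)$ and $\frac{d(x,y)}{d'(\pi(x),\pi(y))}=\frac{d(y,z)}{d'(\pi(y),\pi(z))}=\frac{d(x,z)}{d'(\pi(x),\pi(z))}$; chaining these equalities over triples --- the cases $|M|\leq2$ being trivial --- yields a single global sign $\pm$ and a single constant $c>0$ with $d'(\pi(x),\pi(y))=\frac1c\,d(x,y)$ for all $x\neq y$, so that $\pi$ is a $c$-dilation. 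Setting $y=0$ and recalling $\delta(0)=0$ gives $T\delta(x)=\pm\frac1c(\delta(\pi(x))-\delta(\pi(0)))$; since unwinding the definitions shows $(J_{\pi(0)}\circ\widehat{\pi})(\delta(x))=\delta(\pi(x))-\delta(\pi(0))$ and $\{\delta(x):x\in M\}$ is linearly dense in $\lipfree{M}$, we conclude that $T=\pm\frac1c\,J_{\pi(0)}\circ\widehat{\pi}$.

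I expect the reconstruction of the point set in the second step to be the main obstacle: both the intrinsic characterisation of incidence and the passage from the intersection graph back to $M$ need care, and the latter genuinely requires the small-cardinality cases to be argued separately. By contrast, the first and third steps are linear algebra made to work by the linear independence of the evaluation functionals.
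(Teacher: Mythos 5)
The paper itself does not prove this proposition: it is stated as Weaver's result and used as a black box in the sketch of Theorem \ref{th:banach-stone}, so the only benchmark is \cite[Lemma 3.54 and Theorem 3.55]{Weaver2}, whose overall strategy (recover a point map from the action of $T$ on molecules, then extract a global sign and scaling factor) your sketch follows in spirit. Your first and third steps are sound: linear independence of $\delta(M\setminus\{0\})$ does make each molecule determine its ordered pair, your span criterion does characterise intersecting pairs (telescoping in one direction, counting surviving evaluation functionals in the other, with the base point harmless), and applying $T$ to $d(x,y)m_{xy}+d(y,z)m_{yz}=d(x,z)m_{xz}$ and comparing coefficients does force a common sign and common ratio on every triple, which chains to global constants.

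The genuine gap is the step you yourself flag: passing from the pair bijection $\phi$ to the point bijection $\pi$ when $|M|=4$, which you dismiss as ``handled directly''. For $|M|\geq 5$ your maximal-family argument works, but for $|M|=4$ the intersection graph of the two-element subsets is the octahedron $K_{2,2,2}$, whose automorphism group is strictly larger than $S_4$: transposing an antipodal pair of vertices, e.g.\ $\{1,4\}\leftrightarrow\{2,3\}$ while fixing the other four pairs, preserves the intersection relation but is induced by no bijection of the underlying four points. So intersection-preservation alone cannot produce $\pi$ in that case, and an extra input from $T$ is genuinely needed. A clean fix, which in fact removes the $|M|\geq5$ threshold entirely: the three molecules attached to a combinatorial triangle $\{a,b\},\{b,c\},\{a,c\}$ are linearly dependent (telescoping), while the three molecules of a star $\{x,a\},\{x,b\},\{x,c\}$ are linearly independent (again by independence of the evaluation functionals, with the usual base-point caveat); since $T$ preserves linear dependence, $\phi$ must send stars to stars, and $\pi$ follows for every cardinality, the cases $|M|\leq 3$ being trivial. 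Finally, a bookkeeping slip: with the paper's convention that a $c$-dilation satisfies $d'(\pi(x),\pi(y))=c\,d(x,y)$, the constant you should call $c$ is the reciprocal of the ratio $d(x,y)/d'(\pi(x),\pi(y))$ you computed; as written, your normalisation $d'=\frac1c d$ would give $T\delta(x)=\pm c\,(\delta(\pi(x))-\delta(\pi(0)))$, not $\pm\frac1c(\delta(\pi(x))-\delta(\pi(0)))$.
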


\noindent He then applies this to spaces where $\Mol(M)$ agrees with the set of \emph{preserved} extreme points of $B_{\lipfree{M}}$, which is always preserved by linear isometries. Doing the same with spaces where $\Mol(M)=\ext B_{\lipfree{M}}$ yields Theorem \ref{th:banach-stone}.
\end{proof}

The recent work \cite{CDT_24} by C\'uth, Doucha and Titkos also derives information on isometries between Lipschitz-free spaces by identification and matching of their preserved extreme points. It is conceivable that Theorem \ref{th:extreme} could lead to straightforward extensions of the results in \cite{CDT_24}, as with Weaver's theorem. We have not explored this possibility.

\section{Convex integrals of molecules}
\label{sec:cims}

A functional $m\in\Lip_0(M)^*$ is called a \emph{convex integral of molecules} if it admits an optimal De Leeuw representation $\mu$ that is concentrated on $\wt{M}$. Any such functional belongs to $\lipfree{M}$ and can, in fact, be expressed as a literal Bochner integral of molecules $m=\int_{\wt{M}} m_{xy}\,d\mu(x,y)$ \cite[Proposition 2.6]{APS1}. These elements generalise convex combinations, or series, of molecules, which correspond precisely to the case where $\mu$ is a discrete measure.

Using the Choquet theory from \cite{Smith24}, it can be proved that if $0\in A\subset M$ and $m\in\lipfree{A}$ is a convex integral of molecules in $\lipfree{M}$, then it is also a convex integral of molecules in $\lipfree{A}$ \cite[Corollary 4.12]{Smith24} (that is, if it has an optimal representation concentrated on $\wt{M}$, then it can be chosen so that it is concentrated on $\wt{A}$). The converse is clearly also true. Thus, being a convex integral of molecules does not depend on the ambient space.

In \cite[Problem 6.9 (b)]{APS2}, we asked which metric spaces $M$ have the property that every element of $\lipfree{M}$ is a convex integral of molecules. It follows from the remark above that this is a hereditary property. Known cases include uniformly discrete spaces \cite[Corollary 3.7]{APS1} and compact, purely 1-unrectifiable spaces \cite[Corollary 6.6]{APS2}. We recall that a metric space $M$ is \emph{purely 1-unrectifiable} if, for every $E\subset\RR$ and Lipschitz map $f:E\to M$, the image $f(E)$ has null 1-Hausdorff measure. Equivalently, $M$ is purely 1-unrectifiable when it contains no \emph{curve fragment}, i.e. a bi-Lipschitz copy of a compact subset of $\RR$ with positive measure (see \cite[Corollary 1.12]{AGPP}). Pure 1-unrectifiability characterises the Radon-Nikod\'ym property for Lipschitz-free spaces \cite[Theorem C]{AGPP}.

As a second consequence of Theorem \ref{th:compact_decomposition}, we can now generalise all known cases and show that all complete purely 1-unrectifiable spaces are solutions to \cite[Problem 6.9 (b)]{APS2}.

\begin{theorem}\label{th:p1u_cim}
Let $M$ be a complete purely 1-unrectifiable metric space. Then every element of $\lipfree{M}$ is a convex integral of molecules.
\end{theorem}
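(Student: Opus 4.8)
The plan is to combine Theorem~\ref{th:compact_decomposition} with the known compact case \cite[Corollary 6.6]{APS2} and the hereditary remark at the start of this section. First I would invoke Theorem~\ref{th:compact_decomposition} to write an arbitrary $m\in\lipfree{M}$ as $m=\sum_{n=1}^\infty m_n$ with $\norm{m}=\sum_n\norm{m_n}$ and each $\supp(m_n)$ compact. Each compact set $\supp(m_n)$ is a subset of $M$, hence purely 1-unrectifiable (pure 1-unrectifiability is clearly hereditary to subsets, since any curve fragment in the subset is a curve fragment in $M$). By \cite[Corollary 6.6]{APS2}, every element of $\lipfree{\supp(m_n)\cup\set{0}}$ is a convex integral of molecules in that space; in particular $m_n$, viewed as an element of $\lipfree{\supp(m_n)\cup\set{0}}$, has an optimal representation $\mu_n$ concentrated on $\wt{\supp(m_n)\cup\set{0}}\subset\wt{M}$. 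By the discussion in the paragraph preceding the theorem (or by \cite[Corollary 4.12]{Smith24}), being a convex integral of molecules is independent of the ambient space, so $\mu_n$ is also an optimal representation of $m_n$ as an element of $\lipfree{M}$, concentrated on $\wt{M}$, with $\norm{\mu_n}=\norm{m_n}$.

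The remaining step is to glue the $\mu_n$ into a single optimal representation of $m$ concentrated on $\wt{M}$. Set $\mu=\sum_{n=1}^\infty\mu_n$. This series converges in $\meas{\bwt{M}}$ because $\sum_n\norm{\mu_n}=\sum_n\norm{m_n}=\norm{m}<\infty$, so $\mu$ is a well-defined positive Radon measure with $\norm{\mu}\leq\norm{m}$. Since $\Phi^*$ is continuous, $\Phi^*\mu=\sum_n\Phi^*\mu_n=\sum_n m_n=m$, so $\mu$ represents $m$; combined with $\norm{\mu}\leq\norm{m}\leq\norm{\mu}$ this forces $\norm{\mu}=\norm{m}$, i.e. $\mu$ is optimal. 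Finally, $\mu$ is concentrated on $\wt{M}$ because each $\mu_n$ is: indeed $\mu(\bwt{M}\setminus\wt{M})\leq\sum_n\mu_n(\bwt{M}\setminus\wt{M})=0$. (More carefully, one should exhibit a Borel set of full $\mu$-measure inside $\wt{M}$; taking the union of Borel sets of full $\mu_n$-measure inside $\wt{M}$ works, since a countable union of Borel subsets of $\wt{M}$ is Borel in $\bwt{M}$ and its complement is $\mu$-null by countable subadditivity.) Hence $\mu\in\opr{m}$ is concentrated on $\wt{M}$, which is exactly the definition of $m$ being a convex integral of molecules.

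I do not expect any serious obstacle here, as all the hard work is already packaged into Theorem~\ref{th:compact_decomposition}, the compact case \cite[Corollary 6.6]{APS2}, and the ambient-independence statement \cite[Corollary 4.12]{Smith24}. The only point requiring a modicum of care is the measure-theoretic bookkeeping in the gluing step: verifying that the countable sum of the $\mu_n$ is genuinely a Radon measure on the (non-metrizable) compact space $\bwt{M}$ and that it remains concentrated on $\wt{M}$, a non-Borel-compact set. Both follow routinely from norm-convergence of the series and countable subadditivity of $\mu$ on Borel sets, together with the fact that ``concentrated on $\wt{M}$'' means there is a $\sigma$-compact (hence Borel) subset of $\wt{M}$ carrying the measure, and $\sigma$-compactness is preserved under countable unions.
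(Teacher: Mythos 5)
Your proposal is correct and follows essentially the same route as the paper: decompose $m$ via Theorem~\ref{th:compact_decomposition}, apply the compact purely 1-unrectifiable case \cite[Corollary 6.6]{APS2} to each piece (using that the representations over $\wt{S_n}$ are also representations over $\wt{M}$), and then glue the representations $\mu_n$ into a single optimal representation of $m$ concentrated on $\wt{M}$. The gluing step you carry out inline is exactly the content of Lemma~\ref{lm:cim l1 sum} in the paper, and your measure-theoretic bookkeeping for it is sound.
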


In the proof, we will use the following simple fact that we state independently for later reference.

\begin{lemma}\label{lm:cim l1 sum}
Suppose that $m\in\lipfree{M}$ is the convex sum of finitely or countably many elements $m_n\in\lipfree{M}$. If each $m_n$ is a convex integral of molecules, then so is $m$.
\end{lemma}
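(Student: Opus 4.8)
The plan is to manufacture an optimal De Leeuw representation of $m$ concentrated on $\wt{M}$ by simply summing up such representations of the summands. First I would use the hypothesis to choose, for each $n$, a representation $\mu_n\in\opr{m_n}$ that is concentrated on $\wt{M}$; recall this means $\mu_n\geq 0$, $\norm{\mu_n}=\norm{m_n}$, and $\mu_n$ is carried by a $\sigma$-compact subset of $\wt{M}$ (by inner regularity). Since $\meas{\bwt{M}}$ is a Banach space under the total variation norm (indeed $\meas{\bwt{M}}=C(\bwt{M})^*$, as $\bwt{M}$ is compact Hausdorff) and $\sum_n\norm{\mu_n}=\sum_n\norm{m_n}=\norm{m}<\infty$ by the convex sum hypothesis, the series $\sum_n\mu_n$ converges in total variation norm to a measure $\mu\in\meas{\bwt{M}}$. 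This $\mu$ is positive, being a norm-limit of positive measures, and $\norm{\mu}\leq\sum_n\norm{\mu_n}=\norm{m}$.

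Next I would compute $\Phi^*\mu$. Because $\Phi^*$ is bounded, $\Phi^*\mu=\sum_n\Phi^*\mu_n=\sum_n m_n=m$; in particular $\norm{m}=\norm{\Phi^*\mu}\leq\norm{\mu}\leq\norm{m}$, so $\norm{\mu}=\norm{m}$ and hence $\mu\in\opr{m}$. It then remains only to see that $\mu$ is concentrated on $\wt{M}$: choosing $\sigma$-compact sets $C_n\subset\wt{M}$ with $\mu_n(\bwt{M}\setminus C_n)=0$, the set $C=\bigcup_n C_n$ is a Borel ($F_\sigma$) subset of $\wt{M}$ and $\mu(\bwt{M}\setminus C)\leq\sum_n\mu_n(\bwt{M}\setminus C_n)=0$. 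Thus $\mu$ is an optimal representation of $m$ concentrated on $\wt{M}$, which is exactly the assertion that $m$ is a convex integral of molecules. The finite case is recovered by taking $m_n=0$ for all large $n$.

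I do not expect any real obstacle here; the only points that need a moment's care are that a norm-convergent series of Radon measures has a genuine Radon measure as its sum (immediate from completeness of $\meas{\bwt{M}}$) and that the property ``concentrated on $\wt{M}$'' is preserved under countable sums, which is handled by passing to the $\sigma$-compact carriers as above. Everything else is a direct consequence of the continuity and positivity-preserving behaviour of $\Phi^*$ and of the definition of optimality.
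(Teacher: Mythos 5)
Your proposal is correct and follows essentially the same route as the paper: pick optimal representations $\mu_n\in\opr{m_n}$ concentrated on $\wt{M}$, sum them into $\mu=\sum_n\mu_n$ (which converges absolutely since $\sum_n\norm{\mu_n}=\sum_n\norm{m_n}=\norm{m}$), and observe that $\Phi^*\mu=m$, $\norm{\mu}=\norm{m}$ and $\mu$ is still concentrated on $\wt{M}$. Your extra care about norm completeness of $\meas{\bwt{M}}$ and the $\sigma$-compact carriers inside $\wt{M}$ is a sound way of spelling out details the paper leaves implicit.
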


\begin{proof}
For each $n$, choose $\mu_n\in\opr{m_n}$ concentrated on $\wt{M}$. Then $\mu=\sum_n\mu_n$ converges absolutely, with $\norm{\mu}=\sum_n\norm{\mu_n}=\sum_n\norm{m_n}=\norm{m}$. Moreover $\Phi^*\mu=\sum_n\Phi^*\mu_n=\sum_nm_n=m$, so $\mu\in\opr{m}$. Since each $\mu_n$ is concentrated on $\wt{M}$, so is $\mu$, thus $m$ is a convex integral of molecules.
\end{proof}

\begin{proof}[Proof of Theorem \ref{th:p1u_cim}]
Let $m\in\lipfree{M}$, and find an expression $m=\sum_n m_n$ as in \eqref{eq:compact_decomposition}, so that $S_n=\supp(m_n)\cup\set{0}$ is compact for all $n$. Since $S_n$ is also purely 1-unrectifiable, $m_n$ is a convex integral of molecules in $\lipfree{S_n}$ by \cite[Corollary 6.6]{APS2}. Thus there exists an optimal representation $\mu_n\in\opr{\bwt{S_n}}$ of $m_n$ concentrated on $\wt{S_n}$, that can clearly be identified with an optimal representation $\mu_n\in\opr{\bwt{M}}$ of $m_n$ concentrated on $\wt{S_n}\subset\wt{M}$. That is, each $m_n$ is also a convex integral of molecules in $\lipfree{M}$. The theorem now follows from Lemma \ref{lm:cim l1 sum}.
\end{proof}

As a particular case, \cite[Proposition 2.8]{APS1} yields the next result.

\begin{corollary}
If $M$ is complete and scattered (in particular if it is countable or discrete) then every element of $\lipfree{M}$ is a convex series of molecules.
\end{corollary}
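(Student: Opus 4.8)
The plan is to combine Theorem~\ref{th:p1u_cim} with the structural dichotomy that distinguishes scattered spaces from those containing curve fragments. First I would observe that a complete scattered metric space is automatically purely 1-unrectifiable: any curve fragment would be a bi-Lipschitz copy of a compact subset of $\RR$ with positive measure, hence in particular a perfect (indeed uncountable) subset of $M$, contradicting scatteredness since bi-Lipschitz images of perfect sets are perfect. Equivalently, one invokes that a scattered space contains no topological copy of the Cantor set while a curve fragment does. This places us in the hypotheses of Theorem~\ref{th:p1u_cim}, so every element of $\lipfree{M}$ is already a convex integral of molecules; it remains only to upgrade ``convex integral'' to ``convex series.''

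Next I would use the cited \cite[Proposition 2.8]{APS1}, which (as its placement in the sentence suggests) asserts that for scattered $M$ an optimal representation concentrated on $\wt{M}$ can be replaced by, or automatically is, a discrete measure --- equivalently, that the relevant measure-theoretic structure on $\wt{M}$ forces atomicity when $M$ is scattered. Concretely, given $m\in\lipfree{M}$, Theorem~\ref{th:p1u_cim} furnishes $\mu\in\opr{m}$ concentrated on $\wt{M}$; since $M$ is scattered, $M\times M$ and hence $\wt{M}$ carries no nonatomic Radon measure, so $\mu$ is purely atomic, $\mu=\sum_n a_n\delta_{(x_n,y_n)}$ with $a_n>0$ and $\sum_n a_n=\norm{\mu}=\norm{m}$. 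Applying $\Phi^*$ gives $m=\sum_n a_n\, m_{x_ny_n}$ with $\sum_n a_n = \norm m$, which is exactly the assertion that $m$ is a convex series of molecules. The parenthetical cases follow because countable metric spaces and discrete metric spaces are scattered.

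The only genuine subtlety --- and the step I expect to need the most care --- is confirming that the desired optimal representation is \emph{discrete}, rather than merely concentrated on $\wt M$; a priori an atomless optimal measure supported on $\wt M$ could exist even when the ambient space is scattered only in a weak sense, so one must use scatteredness of $M$ (not just separability) to rule out nonatomic parts, which is precisely the content imported from \cite[Proposition 2.8]{APS1}. Everything else is bookkeeping: matching the $\ell_1$ bound $\sum a_n = \norm\mu = \norm m$ with the convex-series requirement, and noting that countability or discreteness of $M$ are special instances of scatteredness. Since the statement is phrased as an immediate corollary (``\cite[Proposition 2.8]{APS1} yields the next result''), I would keep the argument to the two or three lines sketched above, citing Theorem~\ref{th:p1u_cim} for the reduction to convex integrals and \cite[Proposition 2.8]{APS1} for the passage to discrete representations.
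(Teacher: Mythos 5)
Your argument is correct and is essentially the paper's own (one-line) proof spelled out: scatteredness rules out curve fragments, so $M$ is purely 1-unrectifiable and Theorem~\ref{th:p1u_cim} gives a convex integral of molecules, and \cite[Proposition 2.8]{APS1} (scatteredness forcing the optimal representation concentrated on $\wt{M}$ to be purely atomic) upgrades this to a convex series. The only small imprecision is that a compact subset of $\RR$ with positive measure need not itself be perfect; it is uncountable and hence contains a perfect subset, whose bi-Lipschitz image already contradicts scatteredness, so the argument stands.
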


\begin{remark}
It is also possible to derive Theorem \ref{th:p1u_cim} from Theorem \ref{th:extreme} as follows. We only sketch the argument. Let $m\in S_{\lipfree{M}}$. By replacing $M$ with $\supp(m)\cup\{0\}$, we may assume that $M$ is separable. By \cite[Theorem C]{AGPP}, $\lipfree{M}$ has the Radon-Nikod\'ym property. This allows us to apply Edgar's non-compact version of Choquet's theorem \cite{Edgar}, according to which every element of $S_{\lipfree{M}}$ can be expressed as a Bochner probability integral on any measurable subset of $S_{\lipfree{M}}$ containing $\ext B_{\lipfree{M}}$. Since $\ext B_{\lipfree{M}}\subset\Mol(M)$ by Theorem \ref{th:extreme} and $\Mol(M)$ is a norm-closed subset of $S_{\lipfree{M}}$, the integral can be taken, in particular, on $\Mol(M)$. It only remains to observe that $\Mol(M)$ is homeomorphic to $\wt{M}$ via the map $m_{xy}\mapsto (x,y)$. So, that expression corresponds to a convex integral of molecules.
\end{remark}

We do not know any example of a non-purely 1-unrectifiable space satisfying the hypotheses of \cite[Problem 6.9]{APS2}, and we believe it to be a reasonable conjecture that the converse of Theorem \ref{th:p1u_cim} could hold. A very similar question was posed in \cite[Question 4.2]{APS1} (we will show in Section \ref{sec:diagonal} that it is in fact an equivalent question). Given \cite[Corollary 4.13]{Smith24}, the question can be asked in the following equivalent form.

\begin{question}\label{q:curve fragment}
Let $K$ be a curve fragment, i.e. a bi-Lipschitz copy of a compact subset of $\RR$ with positive measure. Does there exist an element of $\lipfree{K}$ that is not a convex integral of molecules?
\end{question}

Next, we derive a consequence of Theorem \ref{th:p1u_cim}.

\begin{corollary}\label{cr:p1u sna}
Let $M$ be a complete purely 1-unrectifiable metric space. If $f\in\Lip_0(M)$ attains its norm as a functional on $\lipfree{M}$, then it attains its Lipschitz constant.
\end{corollary}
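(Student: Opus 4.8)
The plan is to combine Theorem~\ref{th:p1u_cim} with the defining property of the De Leeuw transform, and the rest is routine. Normalising, we may assume $\lipnorm{f}=1$ (if $f=0$ there is nothing to prove). Let $m\in S_{\lipfree{M}}$ be a point at which $f$ attains its norm, so that $\duality{f,m}=1$. By Theorem~\ref{th:p1u_cim}, $m$ is a convex integral of molecules; that is, there exists an optimal representation $\mu\in\opr{m}$ concentrated on $\wt{M}$. Since $\mu\geq 0$ and $\norm{\mu}=\norm{m}=1$, the measure $\mu$ is a Borel probability measure on $\bwt{M}$ whose mass sits on $\wt{M}$.

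The next step is to unwind the representation. Because $\mu$ represents $m$ and is concentrated on $\wt{M}$, we obtain
\[
1=\duality{f,m}=\int_{\bwt{M}}\Phi f\,d\mu=\int_{\wt{M}}\frac{f(x)-f(y)}{d(x,y)}\,d\mu(x,y) .
\]
On $\wt{M}$ one always has $\Phi f(x,y)\leq\lipnorm{f}=1$. Since $\mu$ is a probability measure and the integral of $\Phi f$ against $\mu$ equals $1$, it follows that $\Phi f=1$ holds $\mu$-almost everywhere; in particular, as $\mu\neq 0$, there exists $(x,y)\in\wt{M}$ with $\dfrac{f(x)-f(y)}{d(x,y)}=1=\lipnorm{f}$. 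Hence $f$ attains its Lipschitz constant, which is the desired conclusion.

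I do not expect any genuine obstacle here: the entire content is carried by Theorem~\ref{th:p1u_cim}, which provides an optimal representation of $m$ living on $\wt{M}$ rather than on the ``long diagonal'' $d^{-1}(0)$ or on coordinates in a compactification of $M$. Pure $1$-unrectifiability is essential precisely for this — without it, a norm-attaining functional might only be ``witnessed'' by representations supported off $\wt{M}$, and the conclusion can fail. The only points meriting a line of care are the passage from $\int_{\bwt{M}}$ to $\int_{\wt{M}}$ (justified by $\mu$ being concentrated on $\wt{M}$ and $\Phi f$ being the bounded continuous extension of the incremental quotient) and the elementary measure-theoretic fact that a nonnegative function bounded above by $1$ with integral $1$ against a probability measure equals $1$ almost everywhere.
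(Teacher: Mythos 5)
Your argument is correct and is essentially the paper's own proof: invoke Theorem~\ref{th:p1u_cim} to get a probability representation $\mu$ of the norming element $m$ concentrated on $\wt{M}$, then note that $\Phi f\leq\lipnorm{f}$ with $\int\Phi f\,d\mu=\lipnorm{f}$ forces $\Phi f=\lipnorm{f}$ $\mu$-a.e., hence at some point of $\wt{M}$. The normalisation $\lipnorm{f}=1$ is only cosmetic, so there is nothing substantive to distinguish the two proofs.
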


\begin{proof}
Let $m\in S_{\lipfree{M}}$ be such that $\duality{f,m}=\lipnorm{f}$. By Theorem \ref{th:p1u_cim}, $m$ is a convex integral of molecules, meaning that $m=\Phi^*\mu$ for some probability measure $\mu\in\meas{\beta\wt{M}}$ concentrated on $\wt{M}$. Then we have
$$
\lipnorm{f} = \duality{f,m} = \int_{\wt{M}} \Phi f \, d\mu \leq \int_{\wt{M}} \lipnorm{f} \, d\mu = \lipnorm{f} .
$$
The inequality is thus an equality, therefore $\Phi f=\lipnorm{f}$ $\mu$-almost everywhere on $\wt{M}$. In particular, there exists $(x,y)\in\wt{M}$ such that $\Phi f(x,y)=\lipnorm{f}$, which is exactly our desired conclusion.
\end{proof}

The set of functions in $\Lip_0(M)$ that attain their Lipschitz constant is sometimes denoted $\mathrm{SNA}(M)$. It is proved in \cite[Theorem 7.4]{GPPR} that $\mathrm{SNA}(M)$ is dense in $\Lip_0(M)$ whenever $\lipfree{M}$ has the Radon-Nikod\'ym property. Corollary \ref{cr:p1u sna} establishes a stronger property (density follows from the Bishop-Phelps theorem). The converse of \cite[Theorem 7.4]{GPPR} doesn't hold: \cite[Theorem 2.5]{CGMRZ} showcases a complete metric space $M$ containing an isometric copy of $[0,1]$ and such that $\cl{\mathrm{SNA}(M)}=\Lip_0(M)$. But we do not know whether the converse of Corollary \ref{cr:p1u sna} is true.

\begin{question}\label{q:sna}
Suppose that $\lipfree{M}$ fails the Radon-Nikod\'ym property. Does there exist a function in $\Lip_0(M)$ that attains its norm on $\lipfree{M}$ but not its Lipschitz constant?
\end{question}

The argument used for the proof of Corollary \ref{cr:p1u sna} shows that any $f\in\Lip_0(M)$ that attains its norm at a convex integral of molecules must attain its Lipschitz constant. Thus, a positive answer to Question \ref{q:sna} would imply a positive answer to Question \ref{q:curve fragment} as well. It is known that Question \ref{q:sna} has a positive answer when $M$ is (the range of) a $C^1$ curve \cite[Theorem 1.2]{Chiclana}.

\medskip

Even when $M$ is not purely 1-unrectifiable, a ``local'' version of Theorem \ref{th:p1u_cim} remains true. Recall the notation $\Delta(E)$ for $E\subset M$ introduced in \eqref{eq:Delta notation}. It is shown in \cite[Lemma 2.4]{APS1} that any optimal representation $\mu$ of an element of $\lipfree{M}$ has zero mass at $\Delta(\set{x})$ for any $x\in M$. It follows that $\mu(\Delta(E))=0$ for countable $E\subset M$ and, by regularity, also for scattered $E$. We now extend this fact to purely 1-unrectifiable sets.

\begin{proposition}\label{pr:diagonal_p1u}
Let $\mu\in\opr{\bwt{M}}$ be an optimal representation of an element of $\lipfree{M}$. Then $\mu(\Delta(E))=0$ for every purely 1-unrectifiable Borel subset $E$ of $M$.
\end{proposition}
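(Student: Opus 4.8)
The plan is to reduce $\mu(\Delta(E))=0$ to the case where $E$ is compact, then invoke Theorem \ref{th:p1u_cim} to express the corresponding piece of the functional as a convex integral of molecules, and finally observe that such a representation must be mutually singular with any measure concentrated on the long diagonal $d^{-1}(0)\supset\Delta(E)$.

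First I would set $\lambda = \mu\restrict_{\Delta(E)}$. Since $\Delta(E)\subset\pp^{-1}(M\times M)$ and $\Phi^*\mu\in\lipfree{M}$, Lemma \ref{lm:Aliaga_Borel} shows that $\Phi^*\lambda\in\lipfree{M}$, and because $\mu$ is optimal and $\mathbf{1}_{\Delta(E)}\geq 0$, the measure $\lambda$ is again optimal; write $m=\Phi^*\lambda$. The goal is to prove $\lambda=0$, which is equivalent to $\norm{m}=0$. By inner regularity of $\lambda$ (it is a finite Radon measure on $\bwt{M}$), it suffices to show $\lambda(\mathcal{K})=0$ for every compact $\mathcal{K}\subset\Delta(E)$. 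Fixing such a $\mathcal{K}$, the set $K=\pp_s(\mathcal{K})$ is a compact subset of $\cl{E}$; but $\cl{E}$ need not be purely 1-unrectifiable. To fix this I would instead take $K$ to be a compact subset of $E$ itself: using regularity of $(\pp_1)_\sharp\lambda$ on the Borel set $E\subset M$, for any $\ep>0$ pick compact $K\subset E$ with $(\pp_1)_\sharp\lambda(E\setminus K)<\ep$, and set $\lambda'=\lambda\restrict_{\Delta(K)}$, which satisfies $d\lambda'=(\mathbf{1}_K\circ\pp_1)\,d\lambda$ and hence $\norm{\lambda-\lambda'}<\ep$ by the computation in Lemma \ref{lm:Aliaga_Borel}. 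Then $\lambda'$ is an optimal representation of some $m'\in\lipfree{K}$ (identifying $\bwt{K}$ with the closure of $\wt{K}$ in $\bwt{M}$; note $\Delta(K)$ lies in that closure), and $\norm{m-m'}<\ep$.

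Now $K$ is compact and purely 1-unrectifiable, so by Theorem \ref{th:p1u_cim} every element of $\lipfree{K}$ is a convex integral of molecules; in particular $m'$ admits an optimal representation $\sigma\in\opr{m'}$ concentrated on $\wt{K}\subset\wt{M}$. On the other hand $\lambda'$ is an optimal representation of $m'$ concentrated on $\Delta(K)\subset d^{-1}(0)$. The key point is that two optimal representations of the same functional have the same total mass $\norm{m'}$, and that the function $d\in G$ (indeed $\pm d\in G$, or one can argue directly): evaluating the defining inequality of $\preccurlyeq$, or more simply using \cite[Proposition 2.1]{APS1} together with the fact that $\sigma$ and $\lambda'$ represent the same functional, I would compare $\duality{d,\sigma}$ and $\duality{d,\lambda'}$. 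Since $\sigma$ is concentrated on $\wt{K}$ where $d>0$ while $\lambda'$ is concentrated on $d^{-1}(0)$, we get $\duality{d,\lambda'}=0$. But an optimal representation concentrated on $d^{-1}(0)$ represents the zero functional: indeed $\Phi^*\lambda'$ is tested against $\Phi f$, and one shows directly that $\int \Phi f\,d\lambda' = 0$ because $\lambda'$ charges only $d^{-1}(0)$ where, by the cone inequality \eqref{G-function} applied in the limit, any representing contribution collapses — concretely, for $\mu\in\opr{\bwt M}$ one has $\Phi^*(\mu\restrict_{d^{-1}(0)})=0$ since $\norm{\mu\restrict_{d^{-1}(0)}}=\norm{\Phi^*(\mu\restrict_{d^{-1}(0)})}$ and any representation concentrated on $d^{-1}(0)$ has $\Phi f$ bounded there in a way forcing the integral to vanish; alternatively, this is \cite[Lemma 2.4]{APS1} in the limiting form already cited in the paragraph preceding the Proposition. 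Hence $m'=0$, so $\norm{m}<\ep$; letting $\ep\to 0$ gives $m=0$, i.e. $\lambda=0$ and $\mu(\Delta(E))=0$.

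The main obstacle I anticipate is the precise justification that an optimal representation concentrated on $d^{-1}(0)$ represents the zero functional — equivalently, that $\Delta$-concentrated pieces of optimal representations are ``invisible'' to $\Phi^*$ up to the purely-1-unrectifiable obstruction. The cleanest route is probably to avoid this by a slightly different packaging: decompose the optimal representation $\mu'$ of $m'$ on $\bwt K$ guaranteed by Theorem \ref{th:opt_conc} to be concentrated on $\pp^{-1}(K\times K)=\wt K\cup\Delta(K)$, split it as $\mu'_{\mathrm{off}}+\mu'_{\mathrm{diag}}$ along $\wt K$ and $\Delta(K)$, note $\Phi^*\mu'_{\mathrm{off}}$ is already a convex integral of molecules and $\Phi^*\mu'_{\mathrm{diag}}=0$ by \cite[Proposition 2.6]{APS1} / the cited $\Delta(\{x\})$ fact pushed to compact $K$ — but this is circular unless we know the diagonal part vanishes, which is exactly what we are proving. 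So the honest argument must go through Theorem \ref{th:p1u_cim}: since \emph{all} of $m'\in\lipfree K$ is a convex integral of molecules, $m'$ has \emph{an} optimal representation $\sigma$ on $\wt K$, and then the comparison $\norm{\sigma}=\norm{m'}=\norm{\lambda'}$ together with optimality and the downward-closedness of $\opr{\bwt M}$ forces $\lambda'$ to represent the zero functional (if $\norm{m'}>0$, optimality of both $\sigma$ and $\lambda'$ plus the fact that $d$ strictly separates their supports contradicts minimality/uniqueness of optimal mass). Getting this last comparison stated with exactly the right lemma reference from \cite{Smith24} or \cite{APS1} is the delicate bookkeeping step; everything else is routine regularity and the already-established Lemma \ref{lm:Aliaga_Borel}.
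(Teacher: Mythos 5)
The decisive step of your argument is invalid. From the coexistence of two optimal representations of $m'$ --- $\sigma$ concentrated on $\wt{K}$ and $\lambda'$ concentrated on $\Delta(K)\subset d^{-1}(0)$ --- you conclude $m'=0$, on the grounds that ``$d$ strictly separates their supports'' and this ``contradicts minimality/uniqueness of optimal mass''. No such principle exists: optimal representations are highly non-unique, the quasi-order $\preccurlyeq$ does not compare two arbitrary optimal representations of the same functional (and neither $d$ nor $-d$ lies in $G$, nor is $d$ even bounded in general), and a non-zero functional can perfectly well admit optimal representations of both kinds. Concretely, take $M=[0,1]$ and $m'=\delta(1)=m_{10}$: the measures $\nu_n=\tfrac1n\sum_{k=0}^{n-1}\delta_{(\frac{k+1}{n},\frac{k}{n})}$ lie in $\opr{\delta(1)}$, and any weak$^*$ cluster point $\nu$ of this sequence is again an element of $\opr{\delta(1)}$ which is concentrated on $d^{-1}(0)$ (indeed on $\Delta([0,1])$), while $\delta_{(1,0)}\in\opr{\delta(1)}$ is concentrated on $\wt{M}$; yet $\delta(1)\neq 0$. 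The same example refutes your fallback claim that ``an optimal representation concentrated on $d^{-1}(0)$ represents the zero functional'' and the mutual-singularity idea of your opening paragraph; non-zero functionals admitting diagonal-concentrated optimal representations are exactly the phenomenon studied in Section \ref{sec:diagonal} (cf. \cite[Theorem 4.1]{APS1}), and \cite[Lemma 2.4]{APS1} only covers $\Delta(\set{x})$ and its countable/scattered extensions. The tell-tale sign is that pure 1-unrectifiability of $K$ never enters your contradiction step, yet $K=[0,1]$ must somehow be excluded. Two smaller problems: Lemma \ref{lm:Aliaga_Borel} is applied to $\mu$ although the proposition does not assume $\mu$ concentrated on $\pp^{-1}(M\times M)$, so $\Phi^*(\mu\restrict_{\Delta(E)})\in\lipfree{M}$ is not justified; and $\Delta(K)$ need not lie in the closure of $\wt{K}$ in $\bwt{M}$ (for $K=\set{x}$ one has $\wt{K}=\varnothing$ while $\Delta(\set{x})$ can be non-empty), though membership of the restricted functional in $\lipfree{K\cup\set{0}}$ could instead be obtained from the shadow lemma if the first issue were repaired.

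The paper's proof makes pure 1-unrectifiability act through Lipschitz functions rather than through a comparison of representations, and it requires no restriction of $\mu$ at all. Given compact $\mathcal{K}\subset\Delta(E)$, set $K=\pp_s(\mathcal{K})\subset E$, a compact purely 1-unrectifiable set; choose $f\in S_{\Lip_0(M)}$ attaining $\duality{f,m}=\norm{m}$ (possible by weak$^*$ compactness of $B_{\Lip_0(M)}$), and use Proposition \ref{pr:p1u compact approx} to produce $f_n\in B_{\Lip_0(M)}$ converging weak$^*$ to $f$ with $\Phi f_n=0$ on $\Delta(K)$. Then $\norm{\mu}=\norm{m}=\lim_n\int_{\bwt{M}}\Phi f_n\,d\mu=\lim_n\int_{\bwt{M}\setminus\Delta(K)}\Phi f_n\,d\mu\leq\mu(\bwt{M}\setminus\Delta(K))$, whence $\mu(\Delta(K))=0$, so $\mu(\mathcal{K})=0$, and inner regularity finishes the proof. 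If you wish to salvage your scheme, you would need an independent argument that no non-zero optimal representation can be concentrated on $\Delta(K)$ for compact purely 1-unrectifiable $K$ --- but that is precisely the proposition for $E=K$, so some input along the lines of the locally flat approximation in Proposition \ref{pr:p1u compact approx} appears unavoidable.
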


Before proving Proposition \ref{pr:diagonal_p1u}, let us see how it yields yet another proof of Theorem \ref{th:p1u_cim}. By Theorem \ref{th:opt_conc}, every $m\in\lipfree{M}$ admits an optimal representation $\mu\in\opr{\bwt{M}}$ concentrated on $\pp^{-1}(M\times M)$. If $M$ is purely 1-unrectifiable, then $\mu(\Delta(M))=0$ by Proposition \ref{pr:diagonal_p1u} and therefore $\mu$ is concentrated on $\pp^{-1}(M\times M)\setminus\Delta(M)=\wt{M}$. In fact, this provides a slight strenghtening of Theorem \ref{th:p1u_cim}: \emph{every} optimal and minimal representation of an element of $\lipfree{M}$ is concentrated on $\wt{M}$.

For the proof, the notion of locally flat function will be required. Given $f:M\to\RR$, we say that $f$ is \emph{locally flat at $x\in M$} if the Lipschitz constant of $f\restrict_{B(x,r)}$ converges to $0$ as $r\to 0$; equivalently, if $\Phi f$ vanishes on $\Delta(\set{x})$. If $f$ is locally flat at every point of $M$, we simply say that it is \emph{locally flat}. The subspace of $\Lip_0(M)$ consisting of locally flat functions is denoted by $\lip_0(M)$.

One of the main results in \cite{AGPP} is that locally flat functions are weak$^*$ dense in $B_{\Lip_0(K)}$ when $K$ is compact and purely 1-unrectifiable. The next proposition extends this result to $\Lip_0(M)$ when $K\subset M$. It will be the basis for the proof of Proposition \ref{pr:diagonal_p1u}.

\begin{proposition}\label{pr:p1u compact approx}
Let $K$ be a compact, purely 1-unrectifiable subset of $M$. Then the set of 1-Lipschitz functions that are locally flat at every point of $K$ is weak$^*$ sequentially dense in $B_{\Lip_0(M)}$.
\end{proposition}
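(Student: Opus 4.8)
\textbf{Proof proposal for Proposition \ref{pr:p1u compact approx}.}

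The plan is to bootstrap from the known compact case. We are given that $\lip_0(K)$ is weak$^*$ dense in $B_{\Lip_0(K)}$ by the main result of \cite{AGPP}, since $K$ is compact and purely 1-unrectifiable; because $K$ is compact, $\Lip_0(K)$ has a separable predual $\lipfree{K}$, so this density is in fact \emph{sequential}. The target is to push this density statement ``upward'' to $\Lip_0(M)$, where the functions need only be locally flat on $K$ (not on all of $M$). The natural device is a McShane-type extension operator that preserves both the Lipschitz constant and local flatness along $K$.

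First I would fix $f\in B_{\Lip_0(M)}$ and set $g=f\restrict_{K\cup\set{0}}\in B_{\Lip_0(K\cup\set{0})}$. By the compact case, choose a sequence $(g_n)$ of $1$-Lipschitz, locally flat functions on $K\cup\set{0}$ with $g_n\to g$ pointwise on $K\cup\set{0}$ (equivalently weak$^*$ in $\Lip_0(K\cup\set{0})$, using that pointwise and weak$^*$ convergence agree on the bounded set $B_{\Lip_0}$). Next, extend each $g_n$ to a function $f_n$ on $M$ via McShane's theorem, keeping $\lipnorm{f_n}\leq 1$ and $f_n(0)=0$; crucially, I claim the McShane extension of a function that is locally flat on $K$ remains locally flat at every point of $K$. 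This is the technical heart of the argument: one checks that for $x\in K$, the incremental quotients of $f_n$ near $x$ are controlled by those of $g_n$ near $x$ (for pairs inside $K$) and by the distance from the other point to $K$ together with $g_n$'s behaviour there — so the Lipschitz constant of $f_n\restrict_{B(x,r)}$ still tends to $0$ as $r\to 0$. One must use here that $g_n$ is globally $1$-Lipschitz \emph{and} flat at $x$; a careful case analysis (both points in $K$ near $x$; one point in $K$ near $x$ and the other far; both points outside $K$ but near $x$, forcing $x$ to be a non-isolated limit of $M\setminus K$) should close this. The standard formula $f_n(z)=\inf_{y\in K\cup\set{0}}\pare{g_n(y)+d(z,y)}$ (the ``upper'' McShane extension) is concrete enough to make these estimates.

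Finally I would verify weak$^*$ convergence $f_n\to f$ in $B_{\Lip_0(M)}$. Since the $f_n$ lie in the bounded set $B_{\Lip_0(M)}$, it suffices to prove pointwise convergence $f_n(z)\to f(z)$ for each $z\in M$. Fix $z$ and $\ep>0$; pick $y\in K\cup\set{0}$ with $d(z,y)$ close to $\dist(z,K\cup\set{0})$, and compare $f_n(z)$ to $g_n(y)+d(z,y)$ and $f(z)$ to $f(y)+d(z,y)$ using the $1$-Lipschitz property of both $f_n$ and $f$; since $g_n(y)\to f(y)$, the two sides come within a controlled multiple of $\ep$. A minor point is that the base point $0$ needs to be carried along: if $0\notin K$ one works with $K\cup\set{0}$ throughout, noting that adding one point does not disturb pure $1$-unrectifiability or compactness, and that $g_n(0)=0$ forces $f_n(0)=0$. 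The main obstacle is the claim that McShane extension preserves local flatness on $K$; everything else is routine once that is in hand.
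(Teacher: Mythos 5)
Your strategy has the right first step (use the compact p1u case from \cite{AGPP} on $K\cup\set{0}$ and extend), but the two claims you label as routine or closable are precisely where it breaks. First, the ``technical heart'' is false as stated: the McShane extension $f_n(z)=\inf_{y\in K\cup\set{0}}\pare{g_n(y)+d(z,y)}$ does \emph{not} preserve local flatness at points of $K$ when viewed in the ambient space $M$. Take $M=[0,1]$, $K$ the ternary Cantor set (compact, $1$-Hausdorff null, hence purely $1$-unrectifiable, with $0\in K$) and $g\equiv 0$ on $K$, which is as flat as possible; its upper McShane extension is $z\mapsto d(z,K)$ near the gaps, and at a gap endpoint $x\in K$ the incremental quotients against points $z$ in the adjacent gap equal $1$, so the extension has local Lipschitz constant $1$ at $x$. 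No case analysis can repair this for the inf-convolution formula; what is needed is an extension theorem that controls the \emph{local} constants of the extension at points of $K$ inside $M$. This is exactly why the paper invokes the Di Marino--Gigli--Pratelli theorem \cite{DGP}, which produces $h_n\in\Lip_0(M)$ with $h_n\restrict_K=g_n$, $\lipnorm{h_n}\leq 1+\frac1n$ and $\Phi h_n=0$ on $\Delta(K)$; this is a genuinely nontrivial result, not a consequence of the McShane formula, and it costs a factor $1+\frac1n$ that has to be normalised away at the end.

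Second, your convergence step also fails: for fixed $z$ far from $K\cup\set{0}$, your $f_n(z)$ converges to $\inf_{y\in K\cup\set{0}}\pare{f(y)+d(z,y)}$, i.e.\ to the McShane extension of $f\restrict_{K\cup\set{0}}$, which in general is strictly larger than $f(z)$; the $1$-Lipschitz inequality $f(z)\leq f(y)+d(z,y)$ cannot be upgraded to an approximate equality, so $f_n\not\to f$ pointwise. The paper's proof handles this by \emph{gluing}: it chooses $g_n$ uniformly within $1/n^2$ of $f$ on $K$ (possible by compactness), sets $f_n=h_n$ on $\set{d(\cdot,K)\leq 1/n^2}$ and $f_n=f$ on $\set{d(\cdot,K)\geq 5/n}$, checks that this partially defined function is $(1+\frac1n)$-Lipschitz, and only then extends; since the transition zone shrinks, $f_n\to f$ pointwise and each $f_n$ agrees with $h_n$ on a neighbourhood of $K$, hence is locally flat there. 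So both the extension claim and the convergence claim need to be replaced by the \cite{DGP} extension plus a quantitative gluing argument.
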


\begin{proof}
We may and will assume that $0\in K$. Recall that $\lip_0(K)$ is an isometric predual of $\lipfree{K}$ by \cite[Theorem B]{AGPP}, thus $B_{\lip_0(K)}$ is weak$^*$ dense in $B_{\Lip_0(K)}$. Fix $f\in S_{\Lip_0(M)}$. Since $K$ is compact, we can approximate $f\restrict_K$ with a sequence of functions $g_n\in B_{\lip_0(K)}$ such that $\abs{g_n(x)-f(x)}\leq 1/n^2$ for all $x\in K$. By the Di Marino-Gigli-Pratelli extension theorem \cite[Theorem 1.1]{DGP}, we may extend $g_n$ to a function $h_n\in\Lip_0(M)$ such that $h_n\restrict_K=g_n$, $\lipnorm{h_n}\leq 1+\tfrac{1}{n}$, and $h_n$ is locally flat at every point of $K$ \emph{with $M$ as the ambient space}, that is, $\Phi h_n=0$ on $\Delta(K)\subset\bwt{M}$.

Now define the function $f_n$ on the set $\set{x\in M : d(x,K)\leq 1/n^2 \text{ or } d(x,K)\geq 5/n}$ as
$$
f_n(x) = \begin{cases}
h_n(x) &\text{if $d(x,K)\leq 1/n^2$} \\
f(x) &\text{if $d(x,K)\geq 5/n$}.
\end{cases}
$$
If $d(x,K)\leq 1/n^2$ and $d(y,K)\geq 5/n$, then there exists $p\in K$ with $d(x,p)\leq 1/n^2$, so
$$
\abs{f_n(x)-f(x)} \leq \abs{h_n(x)-h_n(p)} + \abs{g_n(p)-f(p)} + \abs{f(p)-f(x)} \leq (1+\tfrac{1}{n})\tfrac{1}{n^2} + \tfrac{1}{n^2} + \tfrac{1}{n^2} \leq \tfrac{4}{n^2}
$$
and therefore
$$
\abs{f_n(x)-f_n(y)} \leq \abs{f_n(x)-f(x)} + \abs{f(x)-f(y)} \leq \tfrac{4}{n^2}+d(x,y) \leq \pare{1+\tfrac{1}{n}}d(x,y) .
$$
Thus we can extend $f_n$ to a $(1+\tfrac{1}{n})$-Lipschitz function defined on all of $M$. It is clear that the sequence $(f_n)$ converges to $f$ pointwise, hence in the weak$^*$ topology. Since every $f_n$ agrees with $h_n$ on a neighbourhood of $K$, it is locally flat at every point of $K$. By normalising each $f_n$, we can arrange for them to be 1-Lipschitz and still converge weak$^*$ to $f$.
\end{proof}

\begin{proof}[Proof of Proposition \ref{pr:diagonal_p1u}]
Let $m=\Phi^*\mu$. Fix a compact subset $\mathcal{K}$ of $\Delta(E)$ and note that $\mathcal{K}$ is contained in $\Delta(K)$ where $K=\pp_s(\mathcal{K})\subset E$ is a compact, purely 1-unrectifiable set. Fix some $f\in S_{\Lip_0(M)}$ such that $\duality{f,m}=\norm{m}$. By Proposition \ref{pr:p1u compact approx}, there exists a sequence $(f_n)$ in $B_{\Lip_0(M)}$ that converges weak$^*$ to $f$ and such that every $f_n$ is locally flat at every point of $K$, that is, $\Phi f_n=0$ on $\Delta(K)$. Hence
\begin{align*}
\norm{\mu} = \norm{m} = \duality{f,m} = \lim_{n\to\infty} \duality{f_n,m} &= \lim_{n\to\infty}\int_{\bwt{M}} \Phi f_n \,d\mu = \lim_{n\to\infty}\int_{\bwt{M}\setminus \Delta(K)} \Phi f_n \,d\mu \\
&\leq \lim_{n\to\infty}\lipnorm{f_n}\cdot\mu(\bwt{M}\setminus \Delta(K)) \\
&\leq \mu(\bwt{M}\setminus \Delta(K)) .
\end{align*}
It follows that $\mu(\Delta(K))=0$, so $\mu(\mathcal{K})=0$. Since $\mathcal{K}$ was arbitrary, the regularity of $\mu$ yields $\mu(\Delta(E))=0$ as desired.
\end{proof}

Compactness is essential to the proof of 
Proposition \ref{pr:p1u compact approx} and the results in \cite[Section 2]{AGPP} it depends on, but it is reasonable to expect the corresponding approximation result to hold in general (i.e. non-compact) purely 1-unrectifiable spaces, too.

\begin{question}
If $M$ is complete and purely 1-unrectifiable, can every 1-Lipschitz function $f:M\to\RR$ be approximated pointwise (hence weak$^*$) by locally flat 1-Lipschitz functions?
\end{question}

\section{Diagonal functionals}
\label{sec:diagonal}

Not every metric space satisfies the conclusion of Theorem \ref{th:p1u_cim}. To analyse what happens in general, we define a class of Lipschitz-free space elements whose (non-zero) members can be regarded as being as far from being convex integrals of molecules as possible.

\begin{definition}
We call $m \in \lipfree{M}$ a \emph{diagonal element} if $\mu(\wt{M})=0$ for every $\mu \in \opr{m}$.
\end{definition}

The term ``diagonal element'' acknowledges the fact that every optimal and minimal representation of such an element is concentrated on the diagonal $\pp^{-1}(M\times M)\setminus\wt{M}=\Delta(M)$. In fact, a stronger property holds: every optimal representation, not necessarily minimal, is concentrated on the ``long diagonal'' $d^{-1}(0)$. Here, we note that $\pp^{-1}(M\times M) \cap d^{-1}(0) = \Delta(M)$.

\begin{proposition}\label{pr:diagonal}
An element $m\in\lipfree{M}$ is diagonal if and only if every $\mu\in\opr{m}$ is concentrated on $d^{-1}(0)$.
\end{proposition}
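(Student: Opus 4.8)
The plan is to prove the two implications separately. The implication from right to left is immediate: if every $\mu\in\opr{m}$ is concentrated on $d^{-1}(0)$, then since $d$ is strictly positive on $\wt{M}$ (so that $\wt{M}\cap d^{-1}(0)=\varnothing$), we get $\mu(\wt{M})=0$ for every $\mu\in\opr{m}$, which is exactly the statement that $m$ is diagonal.

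For the converse, suppose $m$ is diagonal and fix an arbitrary $\mu\in\opr{m}$; the goal is $\mu(d^{-1}(0))=\norm{\mu}$, i.e.\ that $\mu$ is concentrated on $d^{-1}(0)$. First I would pass to a $\preccurlyeq$-minimal measure $\mu^\ast\in\meas{\bwt{M}}^+$ with $\mu^\ast\preccurlyeq\mu$, which exists by \cite[Proposition 3.10]{Smith24}; by \cite[Proposition 3.6]{Smith24} it satisfies $\Phi^\ast\mu^\ast=\Phi^\ast\mu=m$ and is again optimal, so $\mu^\ast\in\opr{m}$. Since $\mu^\ast$ is both optimal and minimal and $m\in\lipfree{M}$, \cite[Theorem 5.2]{Smith24} (recalled in the paragraph preceding Theorem \ref{th:opt_conc}) tells us that $\mu^\ast$ is concentrated on $\pp^{-1}(M\times M)=\wt{M}\cup\Delta(M)$. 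Diagonality forces $\mu^\ast(\wt{M})=0$, hence $\mu^\ast$ is concentrated on $\Delta(M)\subseteq d^{-1}(0)$, so that $\mu^\ast(d^{-1}(0))=\norm{\mu^\ast}=\norm{m}$. Finally I would transfer this to $\mu$ using the monotonicity of the long-diagonal mass along $\preccurlyeq$: by \cite[Proposition 3.18]{Smith24}, $\nu\preccurlyeq\nu'$ implies $\nu(d^{-1}(0))\le\nu'(d^{-1}(0))$, so from $\mu^\ast\preccurlyeq\mu$ we obtain $\norm{m}=\mu^\ast(d^{-1}(0))\le\mu(d^{-1}(0))\le\norm{\mu}=\norm{m}$; thus $\mu(d^{-1}(0))=\norm{\mu}$, as required.

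The only place where there is anything to do is this last transfer step, and it is also where a naive approach would get stuck: Theorem \ref{th:opt_conc} controls minimal representations directly, but a priori a non-minimal optimal representation could carry mass on points of $\bwt{M}\setminus\wt{M}$ at which $d>0$, and nothing about the functional $m$ alone seems to rule this out. What saves the argument is precisely that the quantity $\mu\mapsto\mu(d^{-1}(0))$ is monotone in the favourable direction along $\preccurlyeq$, so its being forced up to the maximal possible value $\norm{m}$ on the minimal representation beneath $\mu$ pins it there for $\mu$ itself.
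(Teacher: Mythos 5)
Your proof is correct and follows essentially the same route as the paper: the paper argues the forward implication in contrapositive form (and simultaneously with Proposition \ref{pr:diagonal ambient}, which needs an extra construction you don't require), but it uses exactly the same ingredients you do — passing to a minimal $\mu^*\preccurlyeq\mu$ via Propositions 3.6 and 3.10 of \cite{Smith24}, the concentration of optimal minimal representations of elements of $\lipfree{M}$ on $\pp^{-1}(M\times M)$, and the monotonicity of the mass of $d^{-1}(0)$ along $\preccurlyeq$ from Proposition 3.18. Your final transfer step is just the paper's inequality read in the other direction, so there is no gap.
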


Moreover, similarly to convex integrals of molecules, a functional that is diagonal with respect to a subset of $M$ is still diagonal with respect to $M$.

\begin{proposition}\label{pr:diagonal ambient}
Let $A \subset M$ be closed, with $0 \in A$, and let $m \in \lipfree{A}$. Then $m$ is diagonal as an element of $\lipfree{A}$ if and only if it is diagonal as an element of $\lipfree{M}$.
\end{proposition}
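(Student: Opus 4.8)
The plan is to exploit the fact, noted just after the Definition, that $\pp^{-1}(M\times M)\cap d^{-1}(0) = \Delta(M)$, together with Proposition~\ref{pr:diagonal}, which characterises diagonality of $m$ by: every optimal representation of $m$ is concentrated on $d^{-1}(0)$. The key observation is that for a closed set $A\ni 0$, the compactification $\bwt{A}$ embeds in $\bwt{M}$ as the closure of $\wt{A}$, and this embedding is compatible with the maps $\pp$, $\pp_i$ and $d$; in particular, $d^{-1}(0)\cap\overline{\wt{A}}^{\,\bwt{M}}$ is exactly the copy of $d^{-1}(0)\subset\bwt{A}$. So ``concentrated on $d^{-1}(0)$'' has the same meaning whether we view a representation as living on $\bwt{A}$ or on $\bwt{M}$.

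The forward direction (diagonal in $\lipfree{A}$ implies diagonal in $\lipfree{M}$) is the harder one and is where Theorem~\ref{th:opt_conc} enters. Suppose $m$ is diagonal as an element of $\lipfree{A}$, and let $\mu\in\opr{m}$ be \emph{any} optimal representation of $m$ viewed in $\lipfree{M}$; we must show $\mu$ is concentrated on $d^{-1}(0)$, i.e. $\mu(\wt{M})=0$ once we also rule out mass off $\pp^{-1}(M\times M)$. The obstacle is that $\mu$ need not be concentrated on $\bwt{A}$, nor even on $\pp^{-1}(M\times M)$, so we cannot directly restrict it to $A$. I would proceed as follows. First, using Theorem~\ref{th:opt_conc}, pass to a representation $\mu_0\in\opr{m}$ concentrated on $\pp^{-1}(A\times A)$ where $A=\supp(m)\cup\{0\}$ (note $\supp(m)\subset A$ automatically since $m\in\lipfree{A}$). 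Such a $\mu_0$ lives, up to the identification above, on $\bwt{A}$, hence is an optimal representation of $m$ in $\lipfree{A}$; by hypothesis it is concentrated on $d^{-1}(0)$, so $\mu_0(\wt{A})=0$ and $\mu_0(\wt{M})=0$. This shows diagonal elements of $\lipfree{A}$ admit \emph{some} optimal representation in $\opr{m}\subset\meas{\bwt{M}}$ that is concentrated on $d^{-1}(0)$. To upgrade ``some'' to ``every'', invoke Proposition~\ref{pr:diagonal} in the reverse reading: if one optimal representation were concentrated on $d^{-1}(0)$ while another $\mu$ had $\mu(\wt{M})>0$, we would contradict the equivalence in Proposition~\ref{pr:diagonal} — more precisely, the implication ``$m$ diagonal $\iff$ every $\mu\in\opr{m}$ concentrated on $d^{-1}(0)$'' run through the definition of diagonal (which only asks $\mu(\wt{M})=0$ for every $\mu$) forces consistency. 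Concretely: I would show that if there is one $\mu_0\in\opr{m}$ with $\mu_0(\wt{M})=0$, then $m$ is diagonal in $\lipfree{M}$ by a direct argument — e.g. split an arbitrary $\mu\in\opr{m}$ as $\mu\restrict_{\wt{M}}+\mu\restrict_{\bwt{M}\setminus\wt{M}}$; the first part, by \cite[Proposition 2.6]{APS1}, is an optimal representation of a convex integral of molecules $m'\in\lipfree{M}$, and optimality forces $\norm{m}=\norm{m'}+\norm{m-m'}$; if $m'\neq 0$ one derives a contradiction with the existence of $\mu_0$ by comparing which elements of $\lipfree{M}$ are convex integrals of molecules — since being a convex integral of molecules is, by the discussion before Question~\ref{q:curve fragment}, independent of the ambient space, and $m$ restricted to $A$ has no such component.

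For the converse direction (diagonal in $\lipfree{M}$ implies diagonal in $\lipfree{A}$), the argument is easier: every optimal representation $\nu\in\opr{m}$ in $\meas{\bwt{A}}$ is, as noted in Section~\ref{sec:de leeuw}, also an optimal representation of $m$ as an element of $\lipfree{M}$; if $m$ is diagonal in $\lipfree{M}$ then $\nu(\wt{M})=0$, hence $\nu(\wt{A})=0$ since $\wt{A}\subset\wt{M}$. As $\nu$ was arbitrary, $m$ is diagonal in $\lipfree{A}$.

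I expect the main obstacle to be the bookkeeping around the identification of $\bwt{A}$ inside $\bwt{M}$ and checking that $d^{-1}(0)$, $\wt{M}$ and the maps $\pp_i$ restrict correctly — together with the step where ``one optimal representation concentrated on $d^{-1}(0)$'' is promoted to ``$m$ is diagonal''. The cleanest route for that promotion is: let $m$ admit $\mu_0\in\opr{m}$ with $\mu_0(\wt{M})=0$; for any $\mu\in\opr{m}$, the measure $\mu\restrict_{\wt{M}}$ is optimal (downward closure of $\opr{\bwt{M}}$) and represents a convex integral of molecules $m_1:=\Phi^*(\mu\restrict_{\wt{M}})$; then $m-m_1$ is represented by $\mu\restrict_{\bwt{M}\setminus\wt{M}}$, also optimal, and $\norm{m}=\norm{m_1}+\norm{m-m_1}$. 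Doing the same with $\mu_0$ gives a decomposition $m = 0 + m$ as (convex integral of molecules) $+$ (diagonal part), and the uniqueness principle for such splittings — which follows because a convex integral of molecules that is also a difference landing it back in a diagonal decomposition must be zero (this is essentially \cite[Lemma 2.4]{APS1} / the zero-mass-on-$\Delta(\{x\})$ phenomenon applied via Proposition~\ref{pr:diagonal_p1u}-type reasoning, but more simply, it follows from comparing norms) — forces $m_1=0$, i.e. $\mu(\wt{M})=0$. I would write this last uniqueness step carefully, as it is the technical heart of Proposition~\ref{pr:diagonal ambient}.
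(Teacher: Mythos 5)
The easy direction of your argument (diagonal in $\lipfree{M}$ implies diagonal in $\lipfree{A}$) is correct and is exactly the paper's. The hard direction, however, has two genuine gaps, the second of which is fatal to the strategy. First, a measure $\mu_0\in\opr{m}$ concentrated on $\pp^{-1}(A\times A)$ need \emph{not} ``live on $\bwt{A}$'': the set $\pp^{-1}(A\times A)$ contains points of $\Delta(A)$ that are cluster points of pairs whose coordinates lie outside $A$ (e.g.\ $M=[0,1]$, $A=\{0\}\cup[\tfrac12,1]$, and any cluster point of $(\tfrac1n,\tfrac1{n+1})$ lies over $(0,0)\in A\times A$ but not in $\cl{\wt{A}}\cong\bwt{A}$, since that sequence is completely separated from $\wt{A}$). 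So $\mu_0$ is not in general a representation in $\meas{\bwt{A}}$ and the hypothesis ``$m$ is diagonal in $\lipfree{A}$'' cannot be applied to it. Second, and more seriously, your promotion lemma --- ``if \emph{some} $\mu_0\in\opr{m}$ has $\mu_0(\wt{M})=0$ then \emph{every} $\mu\in\opr{m}$ does'' --- is false. Take $M=[0,1]$ and $m=\delta(1)=m_{10}$: it has the optimal representation $\delta_{(1,0)}$ with full mass on $\wt{M}$, but also an optimal representation concentrated on $d^{-1}(0)$, namely any weak$^*$ cluster point of $\tfrac1n\sum_{k=1}^n\delta_{(k/n,(k-1)/n)}$ (each of these is in $\opr{m}$ and $\int d\,d\mu_n=\tfrac1n\to0$). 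Moreover, the uniqueness of the splitting into a convex integral of molecules plus a diagonal part, which your argument leans on, is precisely what the remark following Theorem \ref{th:diagonal decomposition} declares to be unknown. The underlying problem is that your reduction keeps only ``there exists one good ambient representation'', discarding the full strength of the hypothesis that \emph{every} representation in $\meas{\bwt{A}}$ annihilates $\wt{A}$; that weaker statement simply does not imply diagonality in $\lipfree{M}$.

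The paper's proof closes exactly this gap by arguing contrapositively and, crucially, by transporting information from an arbitrary ambient representation back into $\meas{\bwt{A}}$. Given $\nu\in\opr{m}$ with $\nu(\wt{M})>0$, i.e.\ $\nu(d^{-1}(0))<\norm{m}$, one takes a \emph{minimal} $\mu\preccurlyeq\nu$; by \cite[Propositions 3.6, 3.18 and Corollary 5.6]{Smith24} it is optimal, concentrated on $\pp^{-1}(A\times A)$, and satisfies $\mu(d^{-1}(0))\leq\nu(d^{-1}(0))<\norm{\mu}$, whence $\mu(\wt{A})>0$ (note that Theorem \ref{th:opt_conc} alone cannot do this, since it produces some well-placed representation with no comparison to the given $\nu$). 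Then one restricts only to actual points of $\wt{A}$: with $\lambda=\mu\restrict_{\wt{A}}$, both $m'=\Phi^*\lambda$ and $m-m'$ lie in $\lipfree{A}$, so one may choose an optimal $\omega\in\meas{\bwt{A}}$ representing $m-m'$; the sum $\omega+\lambda$ is then an optimal representation of $m$ inside $\meas{\bwt{A}}$ with $(\omega+\lambda)(\wt{A})\geq\lambda(\wt{A})>0$, contradicting diagonality in $\lipfree{A}$. This ``restrict to $\wt{A}$, re-represent the remainder within $\bwt{A}$'' manoeuvre is the missing ingredient in your proposal and cannot be replaced by the promotion step.
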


We will prove both statements simultaneously.

\begin{proof}[Proof of Propositions \ref{pr:diagonal} and \ref{pr:diagonal ambient}]
Both backward implications are clear: the former because $\wt{M}\cap d^{-1}(0)=\varnothing$, and the latter because every representation of $m$ in $\meas{\bwt{A}}$ can be identified with a representation in $\meas{\bwt{M}}$ and $\bwt{A}\cap\wt{M}=\wt{A}$. To complete the proof, we will show that if $m$ is diagonal in $\lipfree{A}$ then every $\nu\in\opr{\bwt{M}}$ such that $\Phi^*\nu=m$ must be concentrated on $d^{-1}(0) \subset\bwt{M}$.

We prove the contrapositive. Assume there exists $\nu\in\opr{\bwt{M}}$ with $\Phi^*\nu=m$, such that $\nu(d^{-1}(0))<\norm{\nu}=\norm{m}$. By \cite[Proposition 3.10]{Smith24}, there exists a minimal representation $\mu\preccurlyeq\nu$. Then $\mu$ is an optimal representation of $m$, $\mu$ is concentrated on $\pp^{-1}(A\times A)$, and $\mu(d^{-1}(0))\leq\nu(d^{-1}(0))$, by Proposition 3.6, Corollary 5.6, and Proposition 3.18 in \cite{Smith24}, respectively. This implies $\mu(d^{-1}(0)) < \norm{m} = \norm{\mu}$ and therefore $0 < \mu(\pp^{-1}(A\times A)\setminus d^{-1}(0)) = \mu(\wt{A})$.

Let $\lambda=\mu\restrict_{\wt{A}}$. Both measures $\lambda$ and $\mu-\lambda=\mu\restrict_{\bwt{M}\setminus\wt{A}}$ are optimal and, since $m'=\Phi^*\lambda\in\lipfree{A}$, we also have $m-m'=\Phi^*(\mu-\lambda)\in\lipfree{A}$. Moreover $\norm{m'}+\norm{m-m'}=\norm{\lambda}+\norm{\mu-\lambda}=\norm{\mu}=\norm{m}$. Find an optimal representation $\omega$ of $m-m'$ in $\meas{\bwt{A}}$. Then $\omega+\lambda$ is an optimal representation of $m$ in $\meas{\bwt{A}}$, as $\norm{\omega+\lambda}=\norm{\omega}+\norm{\lambda}=\norm{m-m'}+\norm{m'}=\norm{m}$. Moreover $(\omega+\lambda)(\wt{A})\geq\lambda(\wt{A})>0$. Therefore $m$ is not a diagonal element in $\lipfree{A}$.
\end{proof}

The only element of $\lipfree{M}$ that is both diagonal and a convex integral of molecules is $0$. Non-zero diagonal elements are known to exist, e.g. when $M$ contains an isometric copy of a subset of $\RR$ with positive measure \cite[Theorem 4.1]{APS1}. In fact, the following result implies that such elements must exist as soon as there are elements of $\lipfree{M}$ that cannot be written as a convex integral of molecules.

\begin{theorem}\label{th:diagonal decomposition}
Every $m\in\lipfree{M}$ can be expressed as $m=m_c+m_d$ where $m_c,m_d\in\lipfree{M}$ are such that $\norm{m}=\norm{m_c}+\norm{m_d}$, $m_c$ is a convex integral of molecules, and $m_d$ is diagonal. Moreover, they can be chosen so that $\supp(m_c),\supp(m_d)\subset\supp(m)$.
\end{theorem}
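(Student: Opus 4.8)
The plan is to peel off the ``largest possible'' convex-integral-of-molecules part of $m$ by a Zorn's lemma argument and then check that what remains must be diagonal. Let $\mathcal{P}$ denote the set of all pairs $(u,v)$ of elements of $\lipfree{M}$ satisfying $m=u+v$, $\norm{m}=\norm{u}+\norm{v}$, $\supp(u)\cup\supp(v)\subset\supp(m)$, and ``$u$ is a convex integral of molecules''; then $(0,m)\in\mathcal{P}$, so $\mathcal{P}\neq\varnothing$. Order $\mathcal{P}$ by writing $(u,v)\sqsubseteq(u',v')$ when $u'-u$ is a convex integral of molecules, $\norm{u'}=\norm{u}+\norm{u'-u}$, and $\norm{v}=\norm{u'-u}+\norm{v'}$. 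Checking that $\sqsubseteq$ is a partial order is routine: reflexivity uses that $0$ is a convex integral of molecules, and for transitivity the triangle inequality forces $\norm{u''-u}=\norm{u''-u'}+\norm{u'-u}$, after which the finite case of Lemma \ref{lm:cim l1 sum} shows that $u''-u$ is again a convex integral of molecules.

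The crucial step is that every chain $\mathcal{C}\subset\mathcal{P}$ has an upper bound in $\mathcal{P}$. Along $\sqsubseteq$ the quantity $\norm{u}$ is nondecreasing and bounded by $\norm{m}$, so I can fix an increasing cofinal sequence $(u_k,v_k)$ in $\mathcal{C}$ with $\norm{u_k}\uparrow\gamma:=\sup\set{\norm{u}:(u,v)\in\mathcal{C}}$. The telescoping series $u_1+\sum_{k\geq 1}(u_{k+1}-u_k)$ has partial sums $u_k$, and is norm-convergent because the norm of its tail past the $k$-th term is at most $\gamma-\norm{u_k}\to 0$; since its summands are convex integrals of molecules whose norms add up to $\gamma=\norm{u_\infty}$ (where $u_\infty$ is the limit), Lemma \ref{lm:cim l1 sum} gives that $u_\infty$ is a convex integral of molecules. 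As all $u_k$, hence $u_\infty$, lie in the closed subspace $\lipfree{\supp(m)\cup\set{0}}$ of $\lipfree{M}$, and since $0$ is never isolated in a support, $\supp(u_\infty)\subset\supp(m)$; setting $v_\infty=m-u_\infty$ and bookkeeping norms shows $(u_\infty,v_\infty)\in\mathcal{P}$ and $(u,v)\sqsubseteq(u_\infty,v_\infty)$ for every $(u,v)\in\mathcal{C}$.

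By Zorn's lemma, $\mathcal{P}$ has a maximal element $(m_c,m_d)$. The desired decomposition is $m=m_c+m_d$: the only property not immediate from membership in $\mathcal{P}$ is that $m_d$ is diagonal, which I prove by contradiction. If $m_d$ is not diagonal then, by Proposition \ref{pr:diagonal}, some $\omega_0\in\opr{m_d}$ is not concentrated on $d^{-1}(0)$. Put $S=\supp(m_d)\cup\set{0}$ and pass to a minimal $\omega\preccurlyeq\omega_0$ via \cite[Proposition 3.10]{Smith24}: then $\omega\in\opr{m_d}$ by \cite[Proposition 3.6]{Smith24}, it is concentrated on $\pp^{-1}(S\times S)=\wt{S}\cup\Delta(S)$ by \cite[Corollary 5.6]{Smith24} (cf. Theorem \ref{th:opt_conc}), and $\omega(d^{-1}(0))\leq\omega_0(d^{-1}(0))<\norm{m_d}$ by \cite[Proposition 3.18]{Smith24}, so $\omega(\wt{S})>0$. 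Both restrictions $\omega\restrict_{\wt{S}}$ and $\omega\restrict_{\Delta(S)}$ are optimal; let $n_c=\Phi^*(\omega\restrict_{\wt{S}})$, so that $n_c$ is a convex integral of molecules, $n_c\in\lipfree{S}\subset\lipfree{M}$, $\norm{n_c}=\omega(\wt{S})>0$, and $\supp(n_c)\subset\supp(m)$; and let $n_d=m_d-n_c=\Phi^*(\omega\restrict_{\Delta(S)})\in\lipfree{S}$, which gives $\supp(n_d)\subset\supp(m_d)$. From $\norm{m_d}=\norm{n_c}+\norm{n_d}$ and $\norm{m}=\norm{m_c}+\norm{m_d}$ the triangle inequality forces $\norm{m_c+n_c}=\norm{m_c}+\norm{n_c}$ and $\norm{m}=\norm{m_c+n_c}+\norm{n_d}$; hence $(m_c+n_c,n_d)\in\mathcal{P}$ (with $m_c+n_c$ a convex integral of molecules by Lemma \ref{lm:cim l1 sum}) and $(m_c,m_d)\sqsubseteq(m_c+n_c,n_d)$ with $\norm{m_c+n_c}>\norm{m_c}$, contradicting maximality. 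Therefore $m_d$ is diagonal.

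I expect the real difficulty to be exactly the attainment issue that this Zorn argument circumvents. The tempting shortcut --- take an optimal \emph{and} minimal $\mu\in\opr{m}$ concentrated on $\pp^{-1}(A\times A)$, $A=\supp(m)\cup\set{0}$ (Theorem \ref{th:opt_conc}), split $\mu=\mu\restrict_{\wt{M}}+\mu\restrict_{d^{-1}(0)}$, and set $m_c=\Phi^*(\mu\restrict_{\wt{M}})$, $m_d=\Phi^*(\mu\restrict_{d^{-1}(0)})$ --- instantly yields $m=m_c+m_d$ with additive norms, $m_c$ a convex integral of molecules, and one optimal representation of $m_d$ supported on the diagonal; but ``diagonal'' requires that \emph{every} optimal representation of $m_d$ avoid $\wt{M}$, and securing that seems to demand maximising the convex-integral-of-molecules part of $m$ --- a supremum over representations that need not be attained by any single measure. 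Performing the maximisation on partial decompositions instead, and using Lemma \ref{lm:cim l1 sum} to take limits along chains, is what makes the argument go through.
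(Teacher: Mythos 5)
Your proof is correct, but it takes a genuinely different route from the paper. You run a Zorn's lemma exhaustion over partial decompositions $(u,v)$ of $m$, using Lemma \ref{lm:cim l1 sum} to pass to limits along chains, and only invoke the Choquet machinery of \cite{Smith24} (Propositions 3.6, 3.10, 3.18 and Corollary 5.6) in the maximality contradiction, where it is genuinely needed to make the new convex-integral piece $n_c$ supported inside $\supp(m)$. The paper instead gives a short variational argument: it fixes the single cone function $g=1/(1+d)\in G$ (Lemma \ref{lm:inc_subadd}), chooses $\mu\in\opr{m}$ minimising $\duality{g,\mu}$ over the weak$^*$ compact set $\opr{m}$, passes to a $\preccurlyeq$-minimal such $\mu$, and splits it along $d^{-1}(0)$; diagonality of $m_d$ then falls out because any optimal representation of $m_d$ charging $\wt{M}$ would produce a representation of $m$ with strictly smaller $\duality{g,\cdot}$. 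This confirms your closing diagnosis: plain optimality-plus-minimality of $\mu$ is not obviously enough, and the paper's extra $g$-minimisation is exactly the device that replaces your maximisation over partial decompositions. Indeed, the authors explicitly remark that an exhaustion/Zorn proof exists but is ``long and cumbersome''; yours is a clean instance of that alternative (applied to decompositions rather than to totally ordered families of representations), at the cost of more bookkeeping, whereas the paper's proof is shorter and showcases the cone $G$. Two spots in your write-up deserve a sentence more care, though neither is a gap: the ``increasing cofinal sequence'' step should note that any chain element $(u,v)$ with $\norm{u}<\gamma$ lies below some $(u_k,v_k)$ by comparability, while one with $\norm{u}=\gamma$ forces $u=u_\infty$; and the support condition for $v_\infty$ (and for $n_c$, $n_d$) needs the same ``$0$ is never isolated in a support'' argument you cite for $u_\infty$, applied via \cite[Lemma 8]{Aliaga} and the inclusion $\supp(m_d)\cup\set{0}\subset\supp(m)\cup\set{0}$.
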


It is possible to prove Theorem \ref{th:diagonal decomposition} using a (somewhat) elementary exhaustion argument and Zorn's lemma, constructing sequences of convex integrals of molecules whose optimal representations are totally ordered. But the resulting proof is long and cumbersome so, instead, we shall give a quick proof that relies on the theory and methods developed in \cite{Smith24}; this further illustrates its power.

We require the next auxiliary result which yields some straightforward examples of functions in the cone $G$ defined in \eqref{G-function}.

\begin{lemma}\label{lm:inc_subadd}
Let the continuous map $\tau:[0,\infty]\to\RR$ have the property that $t \mapsto t\tau(t)$ is increasing and subadditive on $[0,\infty)$. Then $\tau \circ d \in G$.
\end{lemma}

\begin{proof}
Let $g=\tau \circ d \in C(\bwt{M})$. Given distinct $x,u,y \in M$, by the properties of $\tau$ we have
\begin{align*}
d(x,y)g(x,y) = d(x,y)\tau(d(x,y)) &\leq (d(x,u)+d(u,y))\cdot\tau(d(x,u)+d(u,y)) \\
&\leq d(x,u)\tau(d(x,u)) + d(u,y)\tau(d(u,y)) \\
&= d(x,u)g(x,u) + d(u,y)g(u,y). \qedhere
\end{align*}
\end{proof}

\begin{proof}[Proof of Theorem \ref{th:diagonal decomposition}]
Fix $m\in\lipfree{M}$ and set $S=\supp(m)$ and $A=S\cup\set{0}$.
Define $g:\bwt{M}\to[0,1]$ by
\[
  g(\zeta) = \begin{cases} \dfrac{1}{1+d(\zeta)} & \text{if }d(\zeta) < \infty \\ 0 & \text{if }d(\zeta)=\infty.  \end{cases}
\]
As $t \mapsto t/(1+t)$ is increasing and subadditive on $[0,\infty)$, $g \in G$ by Lemma \ref{lm:inc_subadd}. Since $\opr{m}$ is $w^*$-compact and $\mu \mapsto \duality{g,\mu}$ is $w^*$-continuous, there exists $\mu \in \opr{m}$ such that $\duality{g,\mu} \leq \duality{g,\nu}$ for all $\nu \in \opr{m}$. The same holds for every $\mu'\preccurlyeq\mu$ because $g\in G$, so we may choose $\mu$ to be minimal (here we use \cite[Propositions 3.6 and 3.10]{Smith24}). By \cite[Corollary 5.6]{Smith24}, $\mu$ is concentrated on $\pp^{-1}(A\times A)$. Define $\mu_d = \mu\restrict_{d^{-1}(0)}$ and $m_d = \Phi^*(\mu_d)$. Then $\mu_d$ and $\mu-\mu_d=\mu\restrict_{\wt{A}}$ are optimal, so $m_c=m-m_d$ is a convex integral of molecules and
$$
\norm{m_c}+\norm{m_d}=\norm{\mu-\mu_d}+\norm{\mu_d}=\norm{\mu}=\norm{m} .
$$

Let us show that $m_d$ is diagonal. Assume otherwise, then there exists $\mu'\in\opr{m_d}$ such that $\mu'(\wt{M})>0$. Since $g<1$ on $\wt{M}$, we have
$$
\duality{g,\mu'}<\norm{\mu'}=\norm{m_d}=\norm{\mu_d}=\duality{g,\mu_d} ,
$$
where the last equality holds because $\mu_d$ is concentrated on $d^{-1}(0)$. Thus $\lambda=(\mu-\mu_d)+\mu'$ satisfies $\Phi^*\lambda=m$, so $\lambda\in\opr{m}$ and
\[
\duality{g,\lambda} = \duality{g,\mu-\mu_d} + \duality{g,\mu'} < \duality{g,\mu-\mu_d} + \duality{g,\mu_d} = \duality{g,\mu},
\]
contradicting the minimality of $\mu$ with respect to $g$. Therefore $m_d$ is diagonal.

Finally, we verify that $m_c,m_d$ are supported on $S$. The measure $\mu_d\in\opr{m_d}$ is concentrated on $\pp^{-1}(A\times A)$, and hence supported on $\pp^{-1}(\cl{A}\times\cl{A})$, where the closure of $A$ is taken in the compactification of $M$. Then \cite[Lemma 8]{Aliaga} implies that $\supp(m_d)\subset \cl{A}\cap M=A$. Moreover, recall that if $0$ belongs to the support of an element of $\lipfree{M}$, then it cannot be an isolated point of the support. It follows that $0\in S$ whenever $0\in\supp(m_d)$, so $\supp(m_d)\subset S$. Since $m_c=m-m_d$, we get that $\supp(m_c)\subset\supp(m)\cup\supp(m_d)=S$ as well.
\end{proof}

We do not know if the decomposition in Theorem \ref{th:diagonal decomposition} is unique. If so, we can at least say that it is not linear. Indeed, the two following examples show that it is possible for the sum of two convex integrals of molecules to be a diagonal element and vice versa. Both examples are constructed in the Lipschitz-free space over $M=[0,1]$ with $0$ as the base point.

\begin{example}
Let $(a_n,b_n)$, $n \in \NN$, be pairwise disjoint open subintervals of $[0,1]$ having dense union and satisfying $\sum_{n=1}^\infty (b_n-a_n) < 1$. Define
$$
m=\sum_{n=1}^\infty (b_n-a_n)m_{a_nb_n} \quad\text{and}\quad \mu = \sum_{n=1}^\infty (b_n-a_n)\delta_{(a_n,b_n)} .
$$
Then $\mu \in \opr{m}$, so $m$ is a convex integral of molecules. However, by following the proof of \cite[Theorem 4.1]{APS1}, we see that $m_{10}+m$ is a non-zero diagonal element of $\lipfree{M}$.
\end{example}

\begin{example}
Let $\lambda$ denote Lebesgue measure on $[0,1]$, and let $A\subset [0,1]$ be a Borel set such that $0 < \lambda(A \cap [a,b]) < b-a$ whenever $0 \leq a < b \leq 1$ (see \cite{Rudin83}). Let $T:L_1([0,1])\to\lipfree{[0,1]}$ be the usual isometry, given by $\duality{f,Th}=\int_0^1 f'(t)h(t)\,dt$ for $f \in \Lip_0([0,1])$, $h \in L_1([0,1])$. Set $m=T\mathbf{1}_A$ and $m'=T\mathbf{1}_{[0,1]\setminus A}$. Then $m+m'=T\mathbf{1}_{[0,1]}=m_{10}$. Reasoning as in \cite[Theorem 4.1]{APS1} shows that both $m$ and $m'$ are diagonal. Indeed, $\norm{m}=\norm{\mathbf{1}_A}_1=\lambda(A)$, and thus the function $f \in B_{\Lip_0([0,1])}$ given by $f(x)=\int_0^x \mathbf{1}_A(t) \,dt = \lambda(A \cap [0,x])$, $x \in [0,1]$ norms $m$. Now let $\mu \in \opr{m}$ and $(x,y) \in \wt{M}$. If $x < y$ then $\Phi f(x,y) \leq 0$, and if $x>y$ then
\[
\Phi f(x,y) = \frac{f(x)-f(y)}{x-y} = \frac{\lambda(A \cap (y,x])}{x-y} < 1.
\]
By \cite[Lemma 2.3]{APS1}, $\supp(\mu) \subset (\Phi f)^{-1}(1) \subset d^{-1}(0)$ and therefore $m$ is diagonal.  Given the properties of $A$, the argument showing that $m$ is diagonal applies equally to $m'$.
\end{example}

\section*{Acknowledgements}

Some of this research was carried out during visits of the first and third authors to the Faculty of Information Technology at the Czech Technical University in Prague in 2023 and 2024, and a visit of the second author to the School of Mathematics and Statistics at University College Dublin in 2023.

R. J. Aliaga was partially supported by Grant PID2021-122126NB-C33 funded by MICIU/AEI/ 10.13039/501100011033 and by ERDF/EU. E. Perneck\'a was supported by the Czech Science Foundation (GA\v CR) grant 22-32829S.


\begin{thebibliography}{9N}

\bibitem{AALMPPV1}
	T. A. Abrahamsen, R. J. Aliaga, V. Lima, A. Martiny, Y. Perreau, A. Proch\'azka and T. Veeorg,
	\emph{Delta-points and their implications for the geometry of Banach spaces},
	J. London Math. Soc. \textbf{109} (2024), e12913.

\bibitem{AFGLZ_20}
    M. Alexander, M. Fradelizi, L. C. Garc\'ia-Lirola and A. Zvavitch,
    \emph{Geometry and volume product of finite dimensional Lipschitz-free spaces},
    J. Funct. Anal. \textbf{280} (2021), 108849.

\bibitem{Aliaga}
	R. J. Aliaga,
	\emph{Extreme points in Lipschitz-free spaces over compact metric spaces},
	Mediterr. J. Math. \textbf{19} (2022), 32.

\bibitem{AGPP}
	R. J. Aliaga, C. Gartland, C. Petitjean and A. Proch\'azka,
	\emph{Purely 1-unrectifiable metric spaces and locally flat Lipschitz functions},
	Trans. Amer. Math. Soc. \textbf{375} (2022), 3529--3567.

\bibitem{AliagaGuirao}
	R. J. Aliaga and A. J. Guirao,
	\emph{On the preserved extremal structure of Lipschitz-free spaces},
	Studia Math. \textbf{245} (2019), 1--14.

\bibitem{ANPP}
	R. J. Aliaga, C. No\^us, C. Petitjean and A. Proch\'azka,
	\emph{Compact reduction in Lipschitz free spaces},
	Studia Math. \textbf{260} (2021), 341--359.

\bibitem{AP_measures}
	R. J. Aliaga and E. Perneck\'a,
	\emph{Integral representation and supports of functionals on Lipschitz spaces},
	Int. Math. Res. Not. \textbf{2023} (2023), 3004--3072.
	
\bibitem{AP_rmi}
	R. J. Aliaga and E. Perneck\'a,
	\emph{Supports and extreme points in Lipschitz-free spaces},
	Rev. Mat. Iberoam. \textbf{36} (2020), 2073--2089.

\bibitem{APPP_2020}
	R. J. Aliaga, E. Perneck\'a, C. Petitjean and A. Proch\'azka,
	\emph{Supports in Lipschitz-free spaces and applications to extremal structure},
	J. Math. Anal. Appl. \textbf{489} (2020), 124128.
	
\bibitem{APS1}
	R. J. Aliaga, E. Perneck\'a and R. J. Smith,
	\emph{Convex integrals of molecules in Lipschitz-free spaces},
	J. Funct. Anal. \textbf{287} (2024), 110560.
	
\bibitem{APS2}
	R. J. Aliaga, E. Perneck\'a and R. J. Smith,
	\emph{De Leeuw representations of functionals on Lipschitz spaces},
	Nonlinear Anal. \textbf{260} (2025), 113851.

\bibitem{APP}
	R. J. Aliaga, C. Petitjean and A. Proch\'azka,
	\emph{Embeddings of Lipschitz-free spaces into {$\ell_1$}},
	J. Funct. Anal. \textbf{280} (2021), 108916.

\bibitem{Basset}
	E. Basset,
	\emph{On the weak-fragmentability index of some Lipschitz-free spaces},
	Studia Math. \textbf{283} (2025), 81--104.

\bibitem{Chiclana}
	R. Chiclana,
	\emph{Cantor sets of low density and Lipschitz functions on $C^1$ curves},
	J. Math. Anal. Appl. \textbf{516} (2022), 126489.

\bibitem{CGMRZ}
	R. Chiclana, L. C. Garc\'ia-Lirola, M. Mart\'in and A. Rueda Zoca,
	\emph{Examples and applications of the density of strongly norm attaining Lipschitz maps},
	Rev. Mat. Iberoam. \textbf{37} (2021), 1917--1951.

\bibitem{CDT_24}
	M. C\'uth, M. Doucha and T. Titkos,
	\emph{Isometries of Lipschitz-free Banach spaces},
	J. London Math. Soc. \textbf{110} (2024), e70000.

\bibitem{CKK2}
	M. C\'uth, O. F. K. Kalenda and P. Kaplick\'y,
	\emph{Finitely additive measures and complementability of Lipschitz-free spaces},
	Israel J. Math. \textbf{230} (2019), 409--442.

\bibitem{deLeeuw}
	K. de Leeuw,
	\emph{Banach spaces of Lipschitz functions},
	Studia Math. \textbf{21} (1961), 55--66.

\bibitem{DGP}
	S. di Marino, N. Gigli and A. Pratelli,
	\emph{Global Lipschitz extension preserving local constants},
	Rend. Lincei Mat. Appl. \textbf{31} (2020), 757--765.

\bibitem{Edgar}
	G. A. Edgar,
	\emph{A noncompact Choquet theorem},
	Proc. Amer. Math. Soc. \textbf{49} (1975), 354--358.

\bibitem{GPPR}
	L. Garc\'ia-Lirola, C. Petitjean, A. Proch\'azka and A. Rueda Zoca,
	\emph{Extremal structure and duality of Lipschitz free spaces},
	Mediterr. J. Math. \textbf{15} (2018), 69.

\bibitem{GPAZ}
	L. Garc\'ia-Lirola, A. Proch\'azka and A. Rueda Zoca,
	\emph{A characterisation of the Daugavet property in spaces of Lipschitz functions},
	J. Math. Anal. Appl. \textbf{464} (2018), 473--492.

\bibitem{Godard}
	A. Godard,
	\emph{Tree metrics and their Lipschitz-free spaces},
	Proc. Amer. Math. Soc. \textbf{138} (2010), 4311--4320.

\bibitem{Godefroy_survey}
	G. Godefroy,
	\emph{A survey on Lipschitz-free Banach spaces},
	Comment. Math. \textbf{55} (2015), 89--118.

\bibitem{GK}
	G. Godefroy and N. J. Kalton,
	\emph{Lipschitz-free Banach spaces},
	Studia Math. \textbf{159} (2003), 121--141.

\bibitem{Kalton04}
	N. J. Kalton,
	\emph{Spaces of Lipschitz and H\"older functions and their applications},
	Collect. Math. \textbf{55} (2004), 171--217.	

\bibitem{lmns10}
	J. Luke\v s, J. Mal\' y, I. Netuka and J. Spurn\'y,
	\emph{Integral representation theory},
	Walter de Gruyter, Berlin, New York, 2010.

\bibitem{OO1}
	S. Ostrovska and M. I. Ostrovskii,
	\emph{Generalized transportation cost spaces},
	Mediterr. J. Math. \textbf{16} (2019), 157.

\bibitem{Rudin83}
	W. Rudin,
	\emph{Well-distributed measurable sets},
	Am. Math. Mon. \textbf{90} (1983), 41--42.

\bibitem{Smith24}
	R. J. Smith,
	\emph{A Choquet theory of Lipschitz-free spaces},
	to appear in Proc. London Math. Soc.

\bibitem{Weaver95}
	N. Weaver,
	\emph{Isometries of noncompact Lipschitz spaces},
	Canad. Math. Bull. \textbf{38} (1995), 242--249.

\bibitem{Weaver2}
    N. Weaver,
	\emph{Lipschitz algebras}, 2nd ed.,
	World Scientific Publishing Co., River Edge, NJ, 2018.

\bibitem{Willard}
    S. Willard,
    \emph{General topology},
    Addison-Wesley Publishing Co., Reading, Mass.-London-Don Mills, Ont., 1970. 

\end{thebibliography}
\end{document}